\newtheorem{Proposition}{Proposition}[section]
\newtheorem{Theorem}[Proposition]{Theorem}
\newtheorem{Corollary}[Proposition]{Corollary}
\newtheorem{Lemma}[Proposition]{Lemma}
\newtheorem{Remark}[Proposition]{Remark}
\newcommand{\bbA}{{\mathbb A}}
\newcommand{\bbB}{{\mathbb B}}
\newcommand{\bbC}{{\mathbb C}}
\newcommand{\bbN}{{\mathbb N}}
\newcommand{\bbP}{{\mathbb P}}
\newcommand{\bbQ}{{\mathbb Q}}
\newcommand{\bbZ}{{\mathbb Z}}
\newcommand{\frg}{{\mathfrak g}}
\newcommand{\frp}{{\mathfrak p}}
\newcommand{\frt}{{\mathfrak t}}
\newcommand{\cC}{{\mathcal C}}
\newcommand{\cF}{{\mathcal F}}
\newcommand{\cI}{{\mathcal I}}
\newcommand{\cO}{{\mathcal O}}
\newcommand{\cX}{{\mathcal X}}
\newcommand{\cY}{{\mathcal Y}}
\newcommand{\Qp}{\mathbb Q_p} 
\newcommand{\Cp}{\mathbb C_p} 
\newcommand{\N}{\mathbb N}
\newcommand{\Z}{\mathbb Z}
\newcommand{\R}{\mathbb R}
\renewcommand{\P}{\mathbb P}
\begin{document}

\title{The pro-\'etale cohomology of Drinfeld's upper half space}
\author{Sascha Orlik}
\address{Bergische Universit\"at Wuppertal, Fakult\"at f\"ur Mathematik und Naturwissenschaften, Fachgruppe Mathematik und Informatik, Gau\ss{}stra\ss{}e 20, D-42119 Wuppertal,  Germany}
\email{orlik@math.uni-wuppertal.de}
\date{}

\begin{abstract}
We determine the geometric pro-\'etale cohomology of Drinfeld's upper half space over a $p$-adic field  $K$.
The strategy is different from \cite{CDN} and is based on the approach developed in \cite{O} describing global sections of
equivariant vector bundles.
\end{abstract}

\maketitle

\normalsize


\section*{Introduction}

Let $K$ be a finite extension of $\Qp$ with absolute Galois group $\Gamma_K={\rm Gal}(\overline{K}/K)$ and let $\bbC_p$ be the completion of an algebraic closure of $K$.
We denote by 
$$ \cX=\mathbb \P_K^d\setminus \bigcup\nolimits_{H \varsubsetneq K^{d+1}}\mathbb \P(H)$$
(the complement of all  $K$-rational hyperplanes in projective space $\mathbb \P_K^d$) 
Drinfeld's upper half space \cite{D} of dimension $d\geq1$ over $K$.  This is a rigid analytic variety over $K$
which is equipped with a natural action of $G={\rm GL}_{d+1}(K)$. 
In \cite{CDN} Colmez, Dospinescu and Niziol determined the pro-\'etale cohomology of $\cX_{\bbC_p}$ as a special case 
considering more generally  Stein spaces $X$ which have a semistable weak formal scheme  over the ring of integers $O_K.$
It turns out that these  cohomology
groups are strictly exact extensions 
\begin{equation}\label{CDN_thm}
0 \rightarrow \Omega^{s-1}(\cX)/D^{s-1}(\cX)\hat{\otimes}_K \bbC_p \rightarrow H^s(\cX_{\bbC_p},\bbQ_p(s)) \rightarrow 
v^{G}_{{P_{(d-s+1,1,\ldots,1)}}}(\bbQ_p)' \rightarrow 0
\end{equation}
of $G\times \Gamma_K$-modules.
Here $\Omega^{s-1}$ is the sheaf of differential forms of degree $s-1$ on $\bbP_K^d$,  
$D^{s-1}=\ker(d^{s-1})$ where $d^{s-1}:\Omega^{s-1} \to \Omega^s$ is the differential morphism  and $v^{G}_{{P_{(d-s+1,1,\ldots,1)}}}(\bbQ_p)'$ is the (strong) topological dual of the 
generalized smooth Steinberg representation
attached to the decomposition $(d-s+1,1,\ldots,1)$ of $d+1.$
 From the above extensions one tells that these invariants are finer than the de Rham cohomology 
of $\cX$  determined by Schneider and Stuhler \cite{SS}. The latter objects are  essentially (replace $\bbQ_p$ by $K$) given  by the representations on the RHS  of the above sequences. 
For the proof of their result, Colmez, Dospinescu and Niziol  use syntomic and Hyodo-Kato cohomology, comparison isomorphisms
(Fontaine-Messing period morphisms) and moreover as already mentioned above a  ($p$-adic) semi-stable weak formal model of
$\cX$ over $O_K$. In the meantime Colmez and Niziol \cite{CN2} generalized their results to a large extent to arbitrary smooth
rigid analytic (dagger) varieties.

Nevertheless, our goal in this paper is to give an alternative approach for the determination  of the (p-adic) pro-\'etale cohomology
groups of $\cX_{\bbC_p}.$ The strategy is based  on the machinery developed in 
\cite{O} for describing global sections of equivariant vector bundles on $\cX.$ The advantage is that it reduces the computation of the pro-\'etale cohomology to 
simpler geometric objects, as open or punctured discs. For the latter aspect we follow the idea of Le Bras \cite{LB} who sketched
a general strategy for Stein spaces and carried this out for open and essentially for punctured discs.\footnote{This strategy 
has been also used recently by
Guido Bosco \cite{Bo} in the case of Drinfeld's upper half space.}
Moreover, we have to consider as a technical ingredient local cohomology groups 
$H^{\ast}_{\P^j_K(\epsilon)}(\P^d_K,\Omega^{\dag,s-1}/D^{\dag,s-1})$ with support in certain tubes  $\P^j_K(\epsilon)$
of projective subspace $\P^j_K$ and with coefficients in the dagger sheaf attached to $\Omega^{s-1}/D^{s-1}.$

\smallskip
Another feature of our approach is that we are able to make more precise the structure of the $G$-representation 
in the spirit 
of \cite{O,OS}. For any integer $j\geq 0$, let $P_{(j+1,d-j)}\subset G$ be the standard parabolic subgroup of $G$ attached to the decomposition 
$(j+1,d-j) $ of $d+1$ and $L_{(j+1,d-j)}$ its Levi factor. Our main theorem  is:

\medskip
{\it \noindent {\bf Theorem:} i)  For any integer $s\geq 0$, there is an extension\footnote{If the proof of 
Proposition \ref{prop_proetale_punctured_balls} allows us to show that the extensions mentioned there are strictly exact, 
then it follows that the extensions here are strictly exact, as well.}
\begin{equation*}
0 \rightarrow H^0(\cX,\Omega^{\dag,s-1}/D^{\dag,s-1})\hat{\otimes}_K \bbC_p \rightarrow H^s(\cX_{\bbC_p},\bbQ_p(s)) \rightarrow 
v^{G}_{{P_{(d-s+1,1,\ldots,1)}}}(\bbQ_p)' \rightarrow 0
\end{equation*}
of $G\times \Gamma_K$-modules.}

{\it ii) For any integer $s=1,\ldots,d$, there is a descending filtration $(Z^j)_{j=0,\ldots,d}$ on 
$Z^0=H^0(\cX,\Omega^{\dag,s-1}/D^{\dag,s-1})$ 
by closed subspaces together with isomorphisms of locally analytic $G$-representations
$$(Z^j/Z^{j+1})'  \cong \cF^G_{P_{(j+1,d-j)}}(H^{d-j}_{\bbP^{j}_K}(\bbP^d_K,\Omega^{s-1}/D^{s-1}), St_{d-j}), j=0,\ldots,d-1$$  
where $H^{d-j}_{\bbP^{j}_K}(\bbP^d_K,\Omega^{s-1}/D^{s-1})$ is the algebraic local Zariski cohomology and 
$St_{d-j}$ is the Steinberg representation of ${\rm GL}_{d-j}(K)$ considered as representation of $L_{(j+1,d-j)}.$

(Here we refer to \cite{OS} for the definition of the functors $\cF^G_{P_{(j+1,d-j)}}$).}

The content of this paper is organized as follows. The second part deals with the
study of the pro-\'etale local cohomology $H^{\ast}_{\P^j_{\bbC_p}(\epsilon)}(\P^d_{\bbC_p},\bbQ_p)$ with support
in the rigid analytic tubes $\P^j_{\bbC_p}(\epsilon).$  For doing so,  we have to analyze in the first part the analytic local cohomology groups 
$H^{\ast}_{\P^j_K(\epsilon)}(\P^d_K,\cF)$  where $\cF$ is one of the sheaves 
$D^{\dag,s-1}, \Omega^{\dag,s-1}/D^{\dag,s-1}$ from above. As for the latter objects, we cannot apply the same methods of \cite{O} 
directly, e.g. the \u{C}ech complex, since the sheaves are not coherent. We circumvent this problem by proving a vanishing 
result for the  higher cohomology groups of $\cF$ by using the machinery of van der Put \cite{vP} for overconvergent sheaves.
The local pro-\'etale cohomology groups  $H^{\ast}_{\P^j_{\bbC_p}(\epsilon)}(\P^d_{\bbC_p},\bbQ_p)$  are needed   in order to evaluate
the spectral sequence attached to some acyclic complex on the closed complement of $\cX_{\bbC_p}$ in $\P^d_{\bbC_p}$  in the final section.
This strategy was already used in \cite{O} for equivariant vector bundles. It works for the pro-\'etale cohomology, as well
since we can pull back this acyclic complex to the pro-\'etale site of $\P^d_{\bbC_p}$.
 Finally  we  show that $H^0(\cX,\Omega^{\dag,s-1}/D^{\dag,s-1})=\Omega^{\dag,s-1}(\cX)/D^{\dag,s-1}(\cX)$ for all
$s\geq 1.$  This result is needed for showing the compatibility of our result with formula (\ref{CDN_thm}) as our  approach 
gives rather rise to the space of global sections of the  sheaves $\Omega^{\dag,s-1}/D^{\dag,s-1}.$

\vspace{0.7cm}
{\it Notation:} We denote  by $p$ a prime,  by $K\supset \bbQ_p$ a finite extension of the field of $p$-adic integers $\Qp$, 
by  $O_K$ its ring of integers and by  $\pi$ a uniformizer of $K$. Let $|\; \;|: K \rightarrow \R$ be the normalized norm, i.e., $|\pi|= \#(O_K/(\pi))^{-1}.$
We denote by $\Cp$ the completion of an algebraic closure $\overline{K}$ of $K$ and 
extend the norm $| \; \;  |$ on it.  For a locally convex $K$-vector space $V$, we denote by $V'$ its strong dual, i.e., the $K$-vector space of continuous linear forms
equipped with the strong topology of bounded convergence.

We denote for a scheme $X$ (or rigid analytic variety) over $K$ by $X^{rig}$ (resp. $X^{ad}$) the  rigid analytic variety attached  (resp. adic space) 
to $X$. 
If $Y \subset \P^d_K$ is a closed algebraic $K$-subvariety and $\cF$ is a sheaf on $\P^d_K$ we write 
$H_Y^\ast(\P^d_K,\cF)$ for the corresponding local cohomology. If $Y$ is a rigid analytic subvariety 
(resp. pseudo-adic subspace) of $(\P^d_K)^{rig}$ (resp.  $(\P^d_K)^{ad}$)  we also write  
$H_Y^\ast(\P^d_K,\cF)$ instead of $H_Y^\ast((\P^d_K)^{rig},\cF^{rig})$ (resp. $H_Y^\ast((\P^d_K)^{ad},\cF^{ad})$) to simplify matters.  
For a scheme $X$ (or an adic space etc.) over $\bbC_p$, we denote by $H^i(X,\bbQ_p)$ the $p$-adic  pro-\'etale cohomology of it.

We use bold letters $\bf G, \bf P,\ldots $ to denote algebraic group schemes over $K$, whereas we use normal letters
$G,P, \ldots $ for their $K$-valued points of $p$-adic groups.
We use Gothic letters $\frg,\frp,\ldots $ for their Lie algebras. The corresponding enveloping algebras are denoted as usual by $U(\frg), U(\frp), \ldots
.$ Finally, we set ${\bf G}:= {\rm {\bf GL_{d+1}}}.$ 
Denote by ${\bf B\subset G}$ the Borel subgroup of lower triangular
matrices. Let ${\bf
T\subset G}$ be the diagonal torus. Let
$\Delta$ be the set of simple roots with respect to ${\bf T\subset B}$. For a
decomposition $(i_1,\ldots,i_r)$ of $d+1,$ let ${\bf
P_{(i_1,\ldots,i_r)}}$ be the corresponding standard-parabolic
subgroup of ${\bf G}$  and ${\bf
L_{(i_1,\ldots,i_r)}}$ its Levi component.
We  consider the algebraic
action $m: {\bf G} \times \P^d_K \rightarrow \P^d_K$ of ${\bf G}$ on
$\P_K^d$ given by
$$g\cdot [q_0:\cdots :q_d]:=m(g,[q_0:\cdots :q_d]):= [q_0:\cdots :q_d]g^{-1}.$$

\vspace{0.7cm}
{\it Acknowledgments:} I am very grateful to  Roland Huber for many helpful discussions. I wish to thank Gabriel Dospinescu, Wieslawa Niziol and Luc Illusie for their comments during a conversation 
at the Arithmetic Geometry conference in Bonn in October 2018 and Arthur-C\'esar Le Bras for pointing out to me the thesis \cite{Bo}
of Guido Bosco.  Finally I thank Michael Rapoport for his remarks on this paper. This research was conducted in the framework of the research training group
\emph{GRK 2240: Algebro-Geometric Methods in Algebra, Arithmetic and Topology},
which is funded by the DFG. 

\section{Some preparations}

We start to recall some further notations used in \cite{O}. Let $L$ be one of the fields $K$ or $\bbC_p.$
Let  $\epsilon \in \bigcup_{n\in \N}\sqrt[n]{|K^\times|} =
|\overline{K^\times}|$ be a $n$-th square root of
some absolute value in $|K^\times|$. For a closed
$L$-subvariety $Y \subset \mathbb P_L^d$, 
the open
$\epsilon$-neighborhood of $Y$ is defined by $$Y(\epsilon)=\Big\{z
\in \mathbb (\P_L^d)^{rig}\mid \mbox{ for any unimodular
representative }  \tilde{z} \mbox{ of $z,$  we have } $$
$$|f_j(\tilde{z})|\leq \epsilon
 \mbox{ for all } 1\leq j\leq r \Big\}.$$ 
Here $f_1,\ldots,f_r \in O_L[T_0,\ldots, T_d]$ are finitely many
homogeneous polynomials  generating the
vanishing ideal of $Y$. We suppose that each polynomial has at least
one coefficient in $O_L^\times.$ It is a
quasi-compact open rigid analytic subspace of $(\P^d_L)^{rig}.$ On
the other hand, the set
$$Y^-(\epsilon)=\Big\{z \in \mathbb (\P_L^d)^{rig}\mid \mbox{  for any unimodular representative } \tilde{z} \mbox{ of $z,$  we have } $$
$$| f_j(\tilde{z})|< \epsilon
 \mbox{ for all } 1\leq j\leq r \Big\}$$
 is the closed $\epsilon$-neighborhood of $Y.$ Again, it is an admissible open subset of $(\P^d_L)^{rig},$ but
 which is in general not quasi-compact.

Recall that Drinfeld's upper half space  $ \cX=\mathbb \P_K^d\setminus \bigcup\nolimits_{H \varsubsetneq K^{d+1}}\mathbb \P(H)$ is a rigid analytic Stein space over $K$ and its algebra of analytic functions $\cO(\cX)$
is a $K$-Fr\'echet space \cite{SS}. From this one deduces that for very vector bundle $\cF$ on $\bbP^d_K$ the space of global
sections $\cX$ has the structure of a $K$-Fr\'echet space. Since our $p$-adic group $G$ stabilizes $\cX$ we even get
for every ${\bf G}$-equivariant vector bundle $\cF$  on $\P^d_K$ the structure of a continuous $G$-representation on $\cF(\cX)$. 
Moreover its (strong) dual has the structure of a locally analytic $G$-representation in the sense of Schneider and Teitelbaum \cite{ST3}.

We consider the  de Rham complex of sheaves 
$$ \cO \stackrel{d_0}{\to} \Omega^1 \stackrel{d_1}{\to} \Omega^2 \to \ldots  \to \Omega^d $$
on the scheme $\P^d_K$  or rigid analytic variety $(\P^d_K)^{\rm rig}.$ Moreover, we let
$$ \cO^\dag \stackrel{d_0}{\to} \Omega^{\dag,1} \stackrel{d_1}{\to} \Omega^{\dag,2} \to \ldots  \to \Omega^{\dag,d} $$
be the de Rham complex on the dagger space $\P^{d,\dag}_K$ in the sense of Gro\ss{}e-Kl\"onne \cite{GK}.
In the following we use the notation $\Omega^{(\dag),s}$ etc. to handle both kind of sheaves simultaneously.
For convenience, we do not distinguish between $(\P^d_K)^{\rm rig}$ and $\P^{d,\dag}_K$ and also for other geometric objects since the underlying topological
spaces (Grothendieck topology) are the same. 
Both complexes are equivariant for the action of ${\bf G}$. 
Let $D^{(\dag),s}$ be the kernel of the sheaf homomorphism $d^s: \Omega^{(\dag),s} \to \Omega^{(\dag),s+1}$.
If $X \subset \P^{d,\dag}_K$ is as Stein space, then 
$\Omega^{\dag,s}(X)=\Omega^{s}(X).$ In particular we get $D^{\dag,s}(X)=D^{s}(X).$
We obtain ${\bf G}$-equivariant sheaves $D^{(\dag),s}$ and  $\Omega^{(\dag), s}/D^{(\dag),s}$ on $\bbP^d_K$, $s=0,\ldots,d.$ 
If we denote by $\cF^{(\dag)}$ one of them then there is an induced action of $G$ on the $K$-vector space of rigid analytic 
sections $\cF^{(\dag)}(\cX).$ Since $D^{(\dag),s}(\cX)$ is by the continuity of $d^s$ closed in $\Omega^{(\dag),s}(\cX)$ 
it follows from above that\footnote{Note that $\cF(\cX)=\cF^\dag(\cX)$}
$\cF^{(\dag)}(\cX)$ is a $K$-Fr\'echet space and its dual  $\cF^{(\dag)}(\cX)'$ is a locally
analytic $G$-representations, as well. 

\begin{Remark} \rm
From Proposition \ref{van_coh_drin} in the final section it will follow that the identity
$\Omega^{(\dag),s}(\cX)/D^{(\dag),s}(\cX) = H^0(\cX,\Omega^{\dag, s}/D^{\dag, s})$ is satisfied.
\end{Remark}

Fix an integer $0 \leq j \leq d-1.$
Let $$\P^j_K=V(T_{j+1},\ldots, T_{d})\subset \P^d_K$$ be the closed ${\bf P_{(j+1,d-j)}}$ stable $K$-subvariety
defined by the vanishing of the coordinates $T_{j+1},\ldots, T_{d}.$ 
The local cohomology groups $H^\ast_{\P^j}(\P^d_K,\cF)$ are then 
${\bf P_{(j+1,d-j)}} \ltimes U(\frg)$-modules, see \cite{O}. 
For any positive integer $n \in \N$, we consider the reduction map
\begin{equation}\label{reduction_map}
p_n: {\rm \bf GL_{d+1}}(O_K)\rightarrow {\rm \bf GL_{d+1}}(O_K/{(\pi^n)}).
\end{equation} 
Put
$P_{(j+1,d-j)}^n:= p_n^{-1}\big({\bf P_{(j+1,d-j)}}(O_K/{(\pi^n)})\big).$ 
This is a compact open subgroup of $G$ which stabilizes the $\epsilon_n$-neighborhood $\P^j_K(\epsilon_n)$ where $\epsilon_n:=|\pi|^n.$ 
Hence $H^\ast_{\P^j_K(\epsilon_n)}(\P^d_K,\cF^{(\dag)})$ is a $P_{(j+1,d-j)}^n \ltimes U(\frg)$-module.
Again as in the algebraic setting, we have
a long exact sequence of $P_{(j+1,d-j)}^n \ltimes U(\frg)$-modules
\begin{eqnarray*}
\cdots & \rightarrow &  H^{i}(\P^d_K,\cF^{(\dag)}) \rightarrow H^{i}(\P^d_K\setminus \P^j_K(\epsilon_n),\cF^{(\dag)}) 
\rightarrow H^{i}_{\P^j_K(\epsilon_n)}(\P^d_K,\cF^{(\dag)}) \\
& \rightarrow &   H^{i+1}(\P^d_K,\cF^{(\dag)}) \rightarrow \cdots.
\end{eqnarray*}

\medskip
In the following we study the analytic local cohomology groups  $H^\ast_{\P^j_K(\epsilon)}(\P^d_K,\cF^\dag)$
for 
$$\cF^\dag\in \Theta:=\{D^{\dag,s}\mid s=1,\ldots,d\} \cup \{\Omega^{\dag,s}/D^{\dag,s}\mid s=1,\ldots,d\}.$$  For any real numbers $0<\delta < \epsilon$, we consider the rigid analytic varieties
$$B(\epsilon):=\{z \in (\bbA_K^1)^{rig}\mid |z| \leq \epsilon \} \mbox{ and }C(\delta,\epsilon):=\{z \in  (\bbA_K^1)^{rig}\mid \delta \leq |z| \leq \epsilon \}$$ 
resp. 
$$B^-(\epsilon):=\{z \in  (\bbA_K^1)^{rig}\mid |z| < \epsilon \} \mbox{ and } C^-(\delta,\epsilon):=\{z \in  (\bbA_K^1)^{rig}\mid \delta < |z| < \epsilon \}.$$ 

\medskip
\begin{Proposition}\label{vanishing_D}
Let $X=\prod_{i=1}^d X_i\subset (\P^d_K)^{\rm rig}$ be some open subspace where for $i=1,\ldots, d$,  
$X_i\in \bigcup_{\delta >0, \epsilon >0} \{B(\epsilon), C(\delta,\epsilon)\},$
Then  $H^n(X,D^{\dag,s})=0$ and $H^n(X,\Omega^{\dag,s}/D^{\dag,s})=0$ for all $n>0.$
\end{Proposition}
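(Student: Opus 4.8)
The plan is to reduce the vanishing of higher cohomology of the non-coherent sheaves $D^{\dag,s}$ and $\Omega^{\dag,s}/D^{\dag,s}$ on the polydisc-type space $X=\prod_i X_i$ to the coherent case by resolving them through the de Rham complex and then exploiting that $X$ is (a product of) Stein/quasi-Stein dagger spaces. First I would recall that $\Omega^{\dag,q}$ is a coherent sheaf on the dagger space $\P^{d,\dag}_K$, and that its restriction to $X$ — being a successive product of closed discs $B(\epsilon)$ and annuli $C(\delta,\epsilon)$ — has no higher cohomology: each factor is a (quasi-)Stein dagger space in the sense of Gro\ss e-Kl\"onne, products of such are again of this type, and Kiehl's Theorem B / the dagger analogue gives $H^n(X,\Omega^{\dag,q})=0$ for $n>0$. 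This is exactly the place where I would invoke the machinery of van der Put \cite{vP} for overconvergent (dagger) sheaves to justify the acyclicity statement and the behaviour of cohomology under these explicit base changes.

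Next I would set up the two short exact sequences of sheaves that tie the objects in $\Theta$ together with the coherent $\Omega^{\dag,q}$:
\[
0 \to D^{\dag,s} \to \Omega^{\dag,s} \xrightarrow{d^s} D^{\dag,s+1} \to 0
\]
(using that the image of $d^s$ is precisely $D^{\dag,s+1}$, i.e. the de Rham complex of $\P^{d,\dag}_K$ is a resolution of the constant sheaf — here one needs the Poincaré lemma for dagger spaces, which holds locally) and
\[
0 \to D^{\dag,s} \to \Omega^{\dag,s} \to \Omega^{\dag,s}/D^{\dag,s} \to 0.
\]
From the first sequence, descending induction on $s$ (starting at $s=d$, where $D^{\dag,d}=\ker(d^d:\Omega^{\dag,d}\to 0)=\Omega^{\dag,d}$ is coherent, hence acyclic on $X$) gives $H^n(X,D^{\dag,s})=0$ for $n\geq 1$: the long exact sequence reads $H^n(X,\Omega^{\dag,s})\to H^n(X,D^{\dag,s+1})\to H^{n+1}(X,D^{\dag,s})\to H^{n+1}(X,\Omega^{\dag,s})$, and with the outer terms vanishing for $n\geq 1$ and the middle-left already handled, one gets $H^{n+1}(X,D^{\dag,s})=0$ for $n\geq 1$; the case $n=1$, i.e. $H^1(X,D^{\dag,s})$, follows from $H^1(X,\Omega^{\dag,s})=0$ together with surjectivity of $H^0(X,\Omega^{\dag,s})\to H^0(X,D^{\dag,s+1})$, which is the statement that every closed $(s+1)$-form on $X$ has a primitive $s$-form — again a Poincaré-lemma / acyclicity input on the explicit product $X$. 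Finally, feeding $H^n(X,D^{\dag,s})=0$ ($n\geq 1$) and $H^n(X,\Omega^{\dag,s})=0$ ($n\geq 1$) into the long exact sequence of the second short exact sequence yields $H^n(X,\Omega^{\dag,s}/D^{\dag,s})=0$ for $n\geq 1$ as well (for $n\geq 2$ immediately; for $n=1$ from $H^1(X,\Omega^{\dag,s})=0$).

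The main obstacle, and the step deserving the most care, is the exactness of the sequence $0 \to D^{\dag,s} \to \Omega^{\dag,s} \to D^{\dag,s+1}\to 0$ on the right, i.e. that $d^s$ is surjective as a sheaf map and, more delicately, that on the global sections over $X$ the connecting maps behave as claimed — equivalently, a form of the algebraic/analytic de Rham (Poincaré) lemma valid in the dagger setting and compatible with the overconvergent structure. I expect one must argue this locally on $X$ using explicit coordinates (the variables are genuine coordinates on the polydisc/annulus factors) and then globalize via the acyclicity of $\Omega^{\dag,q}$ on $X$, so that the hypercohomology spectral sequence of the de Rham complex degenerates appropriately. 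A secondary technical point is ensuring that $X$, being a product of closed discs and \emph{closed} annuli $C(\delta,\epsilon)$ (which are quasi-compact) rather than their open versions, genuinely is quasi-Stein/Stein as a dagger space so that van der Put's cohomological vanishing applies; this is where the hypothesis $X_i \in \{B(\epsilon),C(\delta,\epsilon)\}$ is used essentially, and one should check products of such are handled by a Künneth-type argument or by directly exhibiting a quasi-Stein exhaustion.
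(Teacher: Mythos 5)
Your reduction of the quotient sheaf to $D^{\dag,s}$ via $0\to D^{\dag,s}\to\Omega^{\dag,s}\to\Omega^{\dag,s}/D^{\dag,s}\to0$ and the coherence of $\Omega^{\dag,s}$ on the affinoid $X$ matches the paper, but the core of your argument rests on the sequence
$0 \to D^{\dag,s} \to \Omega^{\dag,s} \xrightarrow{\,d^s\,} D^{\dag,s+1} \to 0$
being exact, i.e.\ on a Poincar\'e lemma for the rigid/dagger de Rham complex, and this is false. The image sheaf of $d^s$ is $\Omega^{\dag,s}/D^{\dag,s}$, which is a \emph{proper} subsheaf of $D^{\dag,s+1}$ in general: the quotient is the de Rham cohomology sheaf, and it does not vanish, not even locally. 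Concretely, on an annulus factor $C(\delta,\epsilon)$ the form $dT/T$ is closed but not exact, and it remains non-exact on every member of any admissible covering (equivalently, on every neighborhood of the relevant Gauss-type adic points), because the residue obstruction persists on thinner annuli and on annuli with residue discs removed; overconvergence does not cure this. For the same reason the surjectivity of $H^0(X,\Omega^{\dag,s})\to H^0(X,D^{\dag,s+1})$ that you invoke for the $H^1$-step fails as soon as one factor is an annulus: its cokernel is exactly $H^{s+1}_{dR}(X)\neq0$. (Note that the paper itself works with section-level sequences of the form $0\to(\Omega^{\dag,s-1}/D^{\dag,s-1})(X)\to D^{\dag,s}(X)\to H^s_{dR}(X)\to0$ in Lemma \ref{vanishing_OmodD}, which is precisely the failure of your exact sequence.) So the step you flagged as ``deserving the most care'' is not a technical point to be checked but the place where the argument breaks, and your descending induction on $s$ does not start.

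The actual proof has to carry the non-trivial de Rham cohomology of the factors along rather than assume it vanishes. The paper does this by induction on the number of factors $d$: one projects $\phi:X\to X'=\prod_{i=1}^{d-1}X_i$, uses the Leray spectral sequence $H^j(X',R^i\phi_*D^{\dag,s})\Rightarrow H^{i+j}(X,D^{\dag,s})$, invokes van der Put's results on overconvergent/constructible sheaves to compute the stalks of $R^i\phi_*D^{\dag,s}$ as cohomology of the fibers, and decomposes $\phi_*D^{\dag,s}$ (and likewise the restriction to a fiber) K\"unneth-style into pieces built from $D^{\dag,s-1}_{X'}\otimes^\dag\cO^\dag(X_d)\,dT_d$, $D^{\dag,s}_{X'}\otimes^\dag K$ and a piece isomorphic to $\Omega^{\dag,s-1}_{X'}\otimes^\dag\cO^\dag(X_d)$ modulo $D^{\dag,s-1}_{X'}\otimes^\dag K$, using strictness of the differentials; acyclicity then follows from the induction hypothesis, coherence of $\Omega^{\dag,s-1}_{X'}$, and flatness. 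If you want to salvage your outline, you would have to replace the false resolution by this kind of fibration/K\"unneth argument (or otherwise control the de Rham cohomology sheaf explicitly); as written, the proposal proves the statement only in the case where all factors are discs, where the dagger Poincar\'e lemma you use is actually available at the level of global sections. A minor additional remark: since $B(\epsilon)$ and $C(\delta,\epsilon)$ are affinoid and finite products of affinoids are affinoid, no quasi-Stein exhaustion is needed for the coherent acyclicity input; that part of your worry is harmless.
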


\begin{proof}
Since $X$ is affinoid and $\Omega^{\dag,s}$ is coherent it suffices (by considering the long exact cohomology sequence attached 
to $0\to D^{\dag,s} \to \Omega^{\dag,s} \to \Omega^{\dag,s}/D^{\dag,s}\to 0)$ to prove the vanishing property 
for the sheaf $D^{\dag,s}$, cf. \cite[Prop.3.1]{GK2}.

 We follow here the machinery of van der Put \cite{vP}. The proof is by induction on $d$. 
 The case of the constant sheaf $D^{\dag,0}$ is treated in loc. cit. In particular this contributes to the start of the 
 induction, i.e., for $d=1$. The sheaf $D^{\dag,1}$ coincides in this case  with the coherent sheaf $\Omega^{\dag,1}$ whose 
 higher cohomology vanishes anyway, cf. \cite[Prop.3.1]{GK2}.
 
 Let $d>1$ and $X'=\prod_{i=1}^{d-1} X_i$.  We consider the projection map $\phi:X \to X'$ forgetting the last entry and
  the induced Leray spectral sequence $H^j(X',R^i\phi_\ast D^{\dag,s}) \Rightarrow H^{i+j}(X,D^{\dag,s}).$
 By the very definition the sheaves $D^{\dag,s}$ are  constructible. 
 Hence for any closed geometric point $z$ of $X'$
 there is by Theorem 2.3 of loc.cit. an isomorphism $H^i(\hat{Z},i^{-1} D^{\dag,s})=(R^i\phi_\ast D^{\dag,s})_z$ where
 $i: \hat{Z} \hookrightarrow \hat{X}$ is the inclusion map of the fiber\footnote{Here we adopt the notation of \cite{vP} to denote by $\hat{X}$ the space of closed geometric points of $X$} at $z$. 
 We shall prove that $(R^i\phi_\ast D^{\dag,s})_z=0$ and that  $H^i(X,\phi_\ast D^{\dag,s})=0$ for all $i>0$. We start with 
 the latter  aspect.  
 
 The complex  $\phi_\ast \Omega_X^{\dag,\bullet}$ is the (dagger) tensor product of the de Rham complex $\Omega_{X'}^{\dag,\bullet}$ on 
 $X'$ with the constant de Rham complex $\cO^\dag(X_d) \to \cO^\dag(X_d)dT_d$ of global sections on $X_d$.  Let 
$D_{X'}^{\dag,s-1}$ be the kernel of the morphism $d_{s-1}: \Omega^{\dag,s-1}_{X'} \to  \Omega^{\dag,s}_{X'}.$
 One verifies that $\phi_\ast D^{\dag,s}$ is the sum of the sheaves
 $D_{X'}^{\dag,s-1} \otimes^\dag \cO^\dag(X_d)dT_d$, $D_{X'}^{\dag,s}\otimes^\dag K$, and  some sheaf isomorphic to 
 $\Omega_{X'}^{\dag,s-1} \otimes^\dag \cO^\dag(X_d)$ 
 mod $D^{\dag,s-1}_{X'} \otimes^\dag K$. Here we use the strictness of the differentials as in the proof
 of \cite[Thm. 4.12]{GK2}. The higher cohomology groups 
 of the first two summands vanish by induction and the flatness of $\cO^\dag(X_d)dT_d$ over $K.$ Concerning the vanishing 
 of the 
 latter summand this follows from the fact that $X'$ is affinoid,  $\Omega_{X'}^{s-1}$ is coherent and again by induction and 
 flatness. In the same way, one checks that this vanishing property holds for possible intersections of these sheaves.

 Now we prove that $H^i(\hat{Z},i^{-1} D^{\dag,s})=0$ for all $i>0.$  We start with the observation that for any  
 admissible open subset $V \subset \hat{X_d}$ and any open admissible subset $U\subset \hat{X}$ with $\{z\}\times V\subset U$ 
 there is some open neighborhood $W$ of $z$ such that $W \times V \subset U$ \footnote{This is clear for classical points, i.e. for the rigid varieties $X$ without $\hat{}$. It transfers by the very definition of the topology
 to the enriched rigid varieties $\hat{X}$.}. From this topological fact, 
 we deduce by the very definition
 of the functor $i^{-1}$ that the pull-back $i^{-1} \Omega_X^\bullet$ is the (dagger) tensor product of the de 
 Rham complex $\cO^\dag_{X_d} \to \Omega^{\dag,1}_{X_d}$ on $X_d$ and the complex 
 $\Omega^{\dag,\bullet}_{X',z}$ given by the localisation of $\Omega^{\dag,\bullet}_{X'}$ in $z.$ Since the 
 functor $i^{-1}$ is exact we deduce as above that $i^{-1} D^{\dag,s}$ is
 the sum of the sheaves $(D_{X'}^{\dag, s})_z \otimes^\dag D^{\dag,0}_{X_d}$, $(D_{X'}^{\dag,s-1})_z \otimes^\dag \Omega^{\dag,1}_{X_d}$ and
 and some sheaf isomorphic to $\Omega^{\dag,s-1}_{X',z} \otimes^\dag \cO_{X_d}^{\dag}/(D_{X'}^{\dag,s-1})_z \otimes^\dag D^{\dag,0}_{X_d}$.
 Again by (even the start of) induction and flatness
we conclude the claim.
 \end{proof}

\begin{Corollary}\label{again_vanishing_D}
Let $X^-=\prod_{i=1}^d X_i^-\subset (\P^d_K)^{rig}$ be some open subspace where  for $i=1,\ldots, d$,
$X_i^-\in  \bigcup_{\delta >0, \epsilon >0} \{B^-(\epsilon), C^-(\delta,\epsilon)\}$. Then  
$H^n(X^-,D^{\dag,s})=0$ and $H^n(X^-,\Omega^{\dag,s}/D^{\dag,s})$ $=0$ for all $n>0.$
\end{Corollary}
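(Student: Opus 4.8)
The plan is to realise $X^-$ as a quasi-Stein space exhausted by spaces of the type treated in Proposition~\ref{vanishing_D}, and to propagate the vanishing from the members of the exhaustion to $X^-$ by the standard ``Theorem~B'' mechanism; the only delicate point is a $\varprojlim^1$-term.

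I would fix exhaustions of the one-dimensional factors: if $X_i^-=B^-(\epsilon_i)$ put $X_i^{(m)}=B\big(\epsilon_i(1-\tfrac1m)\big)$, and if $X_i^-=C^-(\delta_i,\epsilon_i)$ put $X_i^{(m)}=C\big(\delta_i(1+\tfrac1m),\epsilon_i(1-\tfrac1m)\big)$ for $m\gg0$. Then each $X^{(m)}:=\prod_{i=1}^d X_i^{(m)}$ is of the form allowed in Proposition~\ref{vanishing_D}, is relatively compact in $X^{(m+1)}$, and $X^-=\bigcup_m X^{(m)}$ is an admissible covering, so $X^-$ is quasi-Stein. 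For an abelian sheaf $\cF$ on $X^-$ the exhaustion yields the usual Milnor exact sequence $0\to{\varprojlim}^1_m H^{n-1}(X^{(m)},\cF)\to H^n(X^-,\cF)\to\varprojlim_m H^n(X^{(m)},\cF)\to 0$. By Proposition~\ref{vanishing_D}, by \cite[Prop.\,3.1]{GK2} for the coherent sheaf $\Omega^{\dag,s}$, by van der Put's vanishing for the constant sheaf $D^{\dag,0}$ (the start of the induction in that proof), and hence --- via the structure-sheaf sequence $0\to D^{\dag,0}\to\cO^\dag\to\cO^\dag/D^{\dag,0}\to0$ --- also for $\cF=\cO^\dag/D^{\dag,0}=\Omega^{\dag,0}/D^{\dag,0}$, one has $H^n(X^{(m)},\cF)=0$ for all $n>0$. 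Thus $H^n(X^-,\cF)=0$ for $n\geq2$ and $H^1(X^-,\cF)={\varprojlim}^1_m\cF(X^{(m)})$, and everything is reduced to the vanishing of these $\varprojlim^1$-terms.

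To kill them I would use the classical fact that a $\varprojlim^1$ over $\N$ of an inverse system of complete metrisable (Banach, resp.\ Fr\'echet) $K$-vector spaces with dense transition maps vanishes, and check density of $\cF(X^{(m+1)})\to\cF(X^{(m)})$ in each case. For $\Omega^{\dag,s}$ this is part of the Theorem-B package for the quasi-Stein $X^-$ (concretely, $\Omega^{\dag,s}$ is free on each $X^{(m)}$ and the Laurent polynomials in the coordinates, which are defined on all of $X^-$, are dense in $\cO^\dag(X^{(m)})$), and similarly for $\cO^\dag$ (and trivially for $D^{\dag,0}$). Since $H^1(X^{(m)},D^{\dag,s})=0$ and $H^1(X^{(m)},D^{\dag,0})=0$ by Proposition~\ref{vanishing_D}, we get $(\Omega^{\dag,s}/D^{\dag,s})(X^{(m)})=\Omega^{\dag,s}(X^{(m)})/D^{\dag,s}(X^{(m)})$ and $(\cO^\dag/D^{\dag,0})(X^{(m)})=\cO^\dag(X^{(m)})/D^{\dag,0}(X^{(m)})$; the transition maps of a quotient of a dense-transition system by a closed subsystem are again dense, which disposes of $\cF=\Omega^{\dag,s}/D^{\dag,s}$ and $\cF=\cO^\dag/D^{\dag,0}$. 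Finally, the Poincar\'e lemma for the dagger de Rham complex gives an isomorphism of sheaves $d_{s-1}\colon\Omega^{\dag,s-1}/D^{\dag,s-1}\xrightarrow{\sim} D^{\dag,s}$ on $X^-$ and on each $X^{(m)}$, compatible with restriction; hence the case $\cF=D^{\dag,s}$ ($s=1,\dots,d$) reduces to $\cF=\Omega^{\dag,s-1}/D^{\dag,s-1}$ already treated (with $\Omega^{\dag,0}/D^{\dag,0}=\cO^\dag/D^{\dag,0}$ when $s=1$). Combining, $H^n(X^-,D^{\dag,s})=H^n(X^-,\Omega^{\dag,s}/D^{\dag,s})=0$ for all $n>0$.

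I expect the main obstacle to be the functional-analytic bookkeeping: one must pin down the correct topology on the section spaces $D^{\dag,s}(X^{(m)})$ and $(\Omega^{\dag,s}/D^{\dag,s})(X^{(m)})$ so that the criterion ``dense transition maps $\Rightarrow{\varprojlim}^1=0$'' genuinely applies (and so that the isomorphism coming from the Poincar\'e lemma is topological), and justify the Milnor sequence on the rigid-analytic (Grothendieck-topology) site --- this is the device already used in \cite{O} to pass from affinoids to Stein spaces, so it should go through. Alternatively --- and more in the spirit of Proposition~\ref{vanishing_D} --- one can rerun van der Put's induction on $d$ from that proof with $X^-$ in place of $X$, replacing each use of ``$X$ affinoid, $\Omega^{\dag,s}$ coherent $\Rightarrow$ higher cohomology vanishes'' by the analogous statement for the quasi-Stein spaces $\prod_{i=1}^{d-1}X_i^-$.
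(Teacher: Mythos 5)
Your skeleton (exhaust $X^-$ by affinoids of the type in Proposition \ref{vanishing_D}, use the Milnor sequence, kill the remaining $\varprojlim^{1}$-term) is exactly the paper's, but the two steps you use to finish are where the problems lie, and one of them is a genuine error. Your reduction of the case $\cF=D^{\dag,s}$ ($s\geq 1$) rests on a ``Poincar\'e lemma'' asserting an isomorphism of sheaves $\Omega^{\dag,s-1}/D^{\dag,s-1}\xrightarrow{\;\sim\;}D^{\dag,s}$. This is false in the rigid $G$-topology: closed dagger forms are not locally exact. For instance $dT/T$ on an annulus factor (or on any circle $\{|z|=\epsilon\}$, which occurs as an affinoid subdomain even inside a disc) is a section of $D^{\dag,1}$ that stays non-exact on every member of an admissible covering containing the corresponding Gauss point, since any such member contains a circle minus finitely many residue classes and the residue obstructs exactness there. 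The claimed isomorphism would also contradict the paper itself: by Lemma \ref{vanishing_OmodD} one has $H^{s}(\P^d_K,\Omega^{\dag,s-1}/D^{\dag,s-1})=0$, whereas by (\ref{same_coh}) $H^{s}(\P^d_K,D^{\dag,s})=H^{s}(\P^d_K,\Omega^{s})\neq 0$. So the $D^{\dag,s}$ case — which is in fact the essential one (the paper argues in the opposite direction, reducing $\Omega^{\dag,s}/D^{\dag,s}$ to $D^{\dag,s}$ via the short exact sequence and coherence of $\Omega^{\dag,s}$ on the Stein space) — is not established by your argument.

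The second issue is the $\varprojlim^{1}$-vanishing itself, which you defer as ``functional-analytic bookkeeping'': the criterion you invoke (dense transition maps between Banach or Fr\'echet spaces) does not apply to the dagger section spaces $D^{\dag,s}(X^{(m)})$, $\Omega^{\dag,s}(X^{(m)})$ or their quotients, since these are inductive limits of Banach spaces and not metrizable, and there is no general Mittag--Leffler theorem of that form for them. This is precisely the delicate point, and the paper resolves it by a comparison with the non-dagger sheaf: the system $\big(H^{0}(U_k,D^{s})\big)_k$ consists of Banach spaces with dense transition maps, so its $\varprojlim^{1}$ vanishes by topological Mittag--Leffler, while the transition maps of the quotient system $\big(H^{0}(U_k,D^{s}/D^{\dag,s})\big)_k$ are all zero, because a rigid section over $U_{k+1}$ is automatically overconvergent on $U_k\Subset U_{k+1}$; combining the two kills $\varprojlim^{1}_k H^{0}(U_k,D^{\dag,s})$. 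Without some such device (or another argument replacing your Poincar\'e-lemma shortcut and your density claim for dagger spaces), your proof does not close; I would rework the ending along these lines.
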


\begin{proof}
As above (by considering the corresponding long exact cohomology sequence) it is enough to prove  the statement for the sheaf $D^{\dag,s}$.
 We start with the observation that $X^-$ is a Stein space for which an admissible affinoid covering $X^-=\bigcup_{k\in \bbN} U_k $ with affinoid objects 
 $U_k$ as before exists.  By the same reasoning as in  \S 2 of \cite{SS}  we have short exact sequences
$$0\rightarrow \varprojlim_k\nolimits^{(1)} H^{i-1}(U_k,D^{\dag,s}) \rightarrow H^i(X^-,D^{\dag,s}) \rightarrow \varprojlim_k H^i(U_k,D^{\dag,s})\rightarrow 0.$$
Thus we get the claim for $i\geq 2$ by applying the previous proposition. For $i=1$, we need to show that  $\varprojlim_k\nolimits^{(1)} H^{0}(U_k,D^{\dag,s}) =0.$ But $\varprojlim_k\nolimits^{(1)} H^{0}(U_k,D^{s}) =0$
as the projective system $(H^{0}(U_k,D^{s}))_k$ consists of Banach spaces where the transition maps have dense image so that the topological Mittag Leffler property is satisfied.
Thus it is enough to show that $\varprojlim_k H^{0}(U_k,D^s/D^{\dag,s}) =0.$ But by the definition of the sheaves $D^{\dag,s}$ the transition maps in the projective system
$H^{0}(U_k,D^s/D^{\dag,s}) $ are all zero.  Hence we see that  $\varprojlim_k H^{0}(U_k,D^s/D^{\dag,s}) =0.$
\end{proof}

\begin{Corollary}\label{vanishing_D_Y} 
Let $Y$ be one of the rigid analytic varieties  $X$ or $X^-$ considered above. Then $\Omega^{\dag,s}(Y)/D^{\dag,s}(Y) = 
H^0(Y,\Omega^{\dag,s}/D^{\dag,s}).$ 
\end{Corollary}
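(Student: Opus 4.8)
The plan is to deduce Corollary \ref{vanishing_D_Y} from the short exact sequence of sheaves
\begin{equation*}
0\to D^{\dag,s}\to \Omega^{\dag,s}\to \Omega^{\dag,s}/D^{\dag,s}\to 0
\end{equation*}
on $Y$, exactly as in the reduction steps of the two preceding proofs. Taking the associated long exact cohomology sequence, the relevant piece is
\begin{equation*}
\Omega^{\dag,s}(Y)\to (\Omega^{\dag,s}/D^{\dag,s})(Y)\to H^1(Y,D^{\dag,s})\to H^1(Y,\Omega^{\dag,s}).
\end{equation*}
So it suffices to know that $H^1(Y,D^{\dag,s})=0$; then the presheaf cokernel equals the sheaf sections and the identity follows. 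First I would observe that $H^1(Y,D^{\dag,s})=0$ is precisely what Proposition \ref{vanishing_D} (for $Y=X$) and Corollary \ref{again_vanishing_D} (for $Y=X^-$) give us. Hence the statement is immediate once the two preceding results are in hand.

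In slightly more detail: the map $\Omega^{\dag,s}(Y)\to(\Omega^{\dag,s}/D^{\dag,s})(Y)$ is always surjective onto the image presheaf, and its kernel is $D^{\dag,s}(Y)$ by left-exactness of global sections; surjectivity onto the full group of sections of the quotient sheaf is equivalent to the vanishing of the connecting map into $H^1(Y,D^{\dag,s})$, which holds because that group is zero. Therefore $\Omega^{\dag,s}(Y)/D^{\dag,s}(Y)\xrightarrow{\ \sim\ }H^0(Y,\Omega^{\dag,s}/D^{\dag,s})$, as claimed. One should note that this isomorphism is moreover topological: both sides carry natural topologies (the left-hand side as a quotient of the Fréchet or Banach space $\Omega^{\dag,s}(Y)$ by the closed subspace $D^{\dag,s}(Y)$, using continuity of $d^s$), and the map is continuous and bijective between suitably complete spaces, hence an isomorphism of topological vector spaces by the open mapping theorem.

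There is no real obstacle here: the entire content has been front-loaded into Proposition \ref{vanishing_D} and Corollary \ref{again_vanishing_D}, and Corollary \ref{vanishing_D_Y} is a formal consequence. The only point requiring a word of care is that $Y=X^-$ is not affinoid, so one cannot invoke coherence directly for the quotient sheaf; but this is exactly why Corollary \ref{again_vanishing_D} is stated separately, and its conclusion $H^1(X^-,D^{\dag,s})=0$ is all that is used. If one also wants the topological refinement, the mild extra input is that $D^{\dag,s}(Y)$ is closed in $\Omega^{\dag,s}(Y)$, which follows from continuity of the differential as already recorded in the preparations section.
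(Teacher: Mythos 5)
Your proposal is correct and follows exactly the paper's argument: the long exact cohomology sequence attached to $0\to D^{\dag,s}\to\Omega^{\dag,s}\to\Omega^{\dag,s}/D^{\dag,s}\to 0$ together with the vanishing $H^1(Y,D^{\dag,s})=0$ supplied by Proposition \ref{vanishing_D} and Corollary \ref{again_vanishing_D}. The added remarks on the topological nature of the identification are consistent with the preparations section but not needed for the statement as given.
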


\begin{proof}
 This follows from the corresponding long exact cohomology sequence and the vanishing of the first cohomology $H^1(Y,D^{\dag,s})$.
\end{proof}

As a first application we may deduce:                   
                  
\begin{Lemma}\label{vanishing_OmodD} Let $s\geq 1$. Then  $H^i(\bbP^d_K,\Omega^{\dag,s-1}/D^{\dag,s-1})=0$ for $i \geq 0$ .

\end{Lemma}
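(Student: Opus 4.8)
The plan is to compute the cohomology of $\Omega^{\dag,s-1}/D^{\dag,s-1}$ on all of $\P^d_K$ by reducing, via the de Rham complex, to the cohomology of the coherent sheaves $\Omega^{\dag,s-1}$ together with the constant sheaf, and to exploit the fact that $\P^d_K$ is proper so that its rigid (or dagger) cohomology agrees with the algebraic one. First I would note the long exact sequence attached to
\begin{equation*}
0 \to D^{\dag,s-1} \to \Omega^{\dag,s-1} \to \Omega^{\dag,s-1}/D^{\dag,s-1} \to 0,
\end{equation*}
which reduces the computation of $H^i(\P^d_K,\Omega^{\dag,s-1}/D^{\dag,s-1})$ to that of $H^i(\P^d_K,D^{\dag,s-1})$ and $H^i(\P^d_K,\Omega^{\dag,s-1})$. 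The latter groups are known: since $\Omega^{\dag,s-1}$ is coherent and $\P^d_K$ is proper, rigid GAGA gives $H^i(\P^d_K,\Omega^{\dag,s-1}) = H^i(\P^d_K,\Omega^{s-1})$, which by Bott vanishing (Hodge cohomology of projective space) is $0$ for $i \neq s-1$ and is one-dimensional over $K$ for $i = s-1$.

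Next I would handle $D^{\dag,s-1}$ by exploiting the exactness of the de Rham complex in positive degrees on $\P^d_K$. Breaking the (dagger) de Rham complex into short exact sequences $0 \to D^{\dag,t} \to \Omega^{\dag,t} \to D^{\dag,t+1} \to 0$ (using that $d^t$ has image exactly $D^{\dag,t+1}$, which holds sheaf-theoretically by the Poincaré lemma for $\Omega^\dag$), I can compute $H^i(\P^d_K,D^{\dag,t})$ inductively by dimension shifting: the cohomology of $D^{\dag,s-1}$ in degree $i$ gets identified with the cohomology of $\Omega^{\dag,s-1-k}$ (hence of $\Omega^{s-1-k}$) in degree $i-k$ for the appropriate $k$, plus possibly a contribution from $D^{\dag,0}$, the constant sheaf, in the bottom case. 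The upshot is that $H^i(\P^d_K,D^{\dag,s-1})$ is one-dimensional over $K$ in degree $i = s-1$ and degree $i = s-2$ (the latter from shifting $\Omega^{s-2}$ down, or the constant sheaf $K$ in degrees where it contributes $H^0$), and vanishes otherwise — one must be a little careful at the edges $s-1 = 0$ and with the constant sheaf's $H^0 = K$. Feeding these into the long exact sequence, the connecting maps $H^i(\P^d_K,\Omega^{\dag,s-1}) \to H^{i+1}(\P^d_K,D^{\dag,s-1})$ should be isomorphisms in the only degree where both sides are nonzero, forcing $H^i(\P^d_K,\Omega^{\dag,s-1}/D^{\dag,s-1}) = 0$ for all $i$.

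Alternatively, and perhaps more cleanly, I would observe that $\Omega^{\dag,s-1}/D^{\dag,s-1} \cong D^{\dag,s}$ as sheaves (since $d^{s-1}$ induces an isomorphism of $\Omega^{\dag,s-1}/D^{\dag,s-1}$ onto its image $D^{\dag,s}$), so the claim is equivalent to $H^i(\P^d_K, D^{\dag,s}) = 0$ for all $i \geq 0$ — but wait, that is false in general since $D^{\dag,d} = \Omega^{\dag,d}$ has nonzero $H^d$. So the isomorphism $\Omega^{\dag,s-1}/D^{\dag,s-1}\cong D^{\dag,s}$ only holds for $s-1 < d$, i.e.\ $s \leq d$, and even then one must track the algebraic $H^{s-1}(\P^d_K,\Omega^{s-1}) = K$ carefully; the honest statement is that these one-dimensional algebraic classes cancel in pairs along the de Rham complex (de Rham cohomology of $\P^d_K$ is $K$ in even degrees $\le 2d$), which is exactly what makes the quotient sheaves acyclic. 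The \textbf{main obstacle} is bookkeeping the low-degree and top-degree edge cases — in particular ensuring the connecting homomorphism genuinely kills the one remaining class rather than producing a spurious $H^{s-1}$ or $H^{s-2}$ — and citing the correct form of the Poincaré lemma (surjectivity of $d^t$ onto $D^{\dag,t+1}$ as sheaves) in the dagger setting, for which I would appeal to Große-Klönne's results already invoked in the excerpt.
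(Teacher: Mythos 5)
Your overall frame (the long exact sequence attached to $0 \to D^{\dag,s-1} \to \Omega^{\dag,s-1} \to \Omega^{\dag,s-1}/D^{\dag,s-1} \to 0$, together with $H^i(\P^d_K,\Omega^{\dag,s-1})=H^i(\P^d_K,\Omega^{s-1})$) agrees with the paper, but the step you use to compute $H^i(\P^d_K,D^{\dag,s-1})$ --- and equally your ``cleaner'' identification $\Omega^{\dag,s-1}/D^{\dag,s-1}\cong D^{\dag,s}$ --- rests on a sheaf-theoretic Poincar\'e lemma that is false in this setting. In the rigid (equivalently dagger) G-topology the de Rham complex is \emph{not} exact as a complex of sheaves: the image sheaf of $d^{t}$ is a proper subsheaf of $D^{\dag,t+1}$. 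Concretely, on an annulus $C(\delta,\epsilon)$ the closed form $dT/T$ is not exact on any admissible covering: such a covering of this quasi-compact affinoid admits a finite refinement by affinoid subdomains, one of which contains a whole sub-annulus, and $dT/T$ is not exact there (overconvergence does not help). What is true, and what the paper proves (Proposition \ref{vanishing_D}, Corollary \ref{vanishing_D_Y}), is only the sections-level statement $0\to(\Omega^{\dag,s-1}/D^{\dag,s-1})(X)\to D^{\dag,s}(X)\to H^s_{dR}(X)\to 0$ on products of balls and annuli; the discrepancy $H^s_{dR}(X)$ does not vanish and cannot be argued away. Note moreover that the isomorphism you propose would contradict the lemma for \emph{every} $1\le s\le d$, not only at top degree, since $H^s(\P^d_K,D^{\dag,s})=H^s(\P^d_K,\Omega^{s})=K$ (identity (\ref{same_coh})). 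Accordingly your dimension shifting yields the wrong pattern for $H^i(\P^d_K,D^{\dag,s-1})$ (it is one-dimensional only in degree $s-1$, not also in degree $s-2$), and the closing appeal to ``classes cancelling in pairs'' is precisely the assertion that requires proof.

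What the argument actually needs, and what the paper supplies, is a way to compute these groups without any sheaf-level Poincar\'e lemma. The paper first shows (Proposition \ref{vanishing_D}, via van der Put's machinery for overconvergent constructible sheaves) that $D^{\dag,s}$ and $\Omega^{\dag,s}/D^{\dag,s}$ have no higher cohomology on products of closed balls and annuli, so the \v{C}ech complex of the standard covering $\P^d_K=\bigcup_{k} D_+(T_k)_{1}$ computes their cohomology on $\P^d_K$. It then argues by induction on $s$: using Corollary \ref{vanishing_D_Y} and the sections-level sequence above, the \v{C}ech complex for $D^{\dag,s}$ is compared with the complex of de Rham cohomology groups $H^s_{dR}(\cdot)$ of the covering pieces, whose cohomology is matched against $H^i(\P^d_K,\Omega^{s})$; the vanishing of $H^i(\P^d_K,\Omega^{\dag,s}/D^{\dag,s})$ then drops out of the long exact sequence. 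To repair your write-up you would have to replace the Poincar\'e-lemma step by an input of this kind; as it stands the key step fails.
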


\begin{proof}
We consider the standard covering $\bbP^d_K=\bigcup_{k=0}^d D_+(T_k)_{1}$ by closed balls, i.e,   where  
$D_+(T_k)_{1}=\{x\in \P^d_k \mid |x_k| \geq |x_j|\;\; \forall j \neq k\}$. 
 By applying Proposition \ref{vanishing_D} to the open subvarieties $D_+(T_k)_{1}, k=0,\ldots,d$ and its intersections
 we see that the corresponding \v{C}ech complex with values in $D^{\dag,s-1}$ computes
 $H^i(\bbP^d_K,D^{\dag,s-1}).$
 Now the proof is by induction on $s$. For $s=1$, we consider the short exact sequence
 $$0\to D^{\dag,0} \to \cO^\dag \to \cO^\dag/D^{\dag,0} \to 0. $$ 
  Using the fact that $H^0(X,D^{\dag,0})=K$ for any rigid analytic variety $X$ appearing in the \v{C}ech complex induced by
 the above covering we see by a combinatorial argument that $H^i(\bbP^d_K,D^{\dag,0})=0$ for $i > 0.$ 
 As for $\cO^\dag$ we have $H^i(\bbP^d_K,\cO^\dag)=H^i(\bbP^d_K,\cO)=0$ for any $i>0$ 
 \cite[Thm 5.1]{Ha}, \cite[Thm. 3.2]{GK2}. For $i=0$, the map $H^0(\bbP^d_K,D^{\dag,0}) \to H^0(\bbP^d_K,\cO^\dag)=K$ is clearly an isomorphism and the
 start of induction is shown.
 
 Now let $s> 1.$ For any  $X$ appearing as geometric object in the \u{C}ech complex,  there are by using
 Corollary \ref{vanishing_D_Y} and since $X$ is smooth short exact sequences
 $$0 \to (\Omega^{\dag,s-1}/D^{\dag,s-1})(X) \to D^{\dag,s}(X) \to H^s_{dR}(X) \to 0 $$
 and 
 \begin{equation}\label{exact_sequence}
  0\to D^{\dag,s} \to \Omega^{\dag,s} \to \Omega^{\dag,s}/D^{\dag,s} \to 0. 
 \end{equation}
  By induction hypothesis and by reconsidering the above covering we get from the first exact sequence 
 isomorphisms $H^i(\bbP^d_K,D^{\dag,s})=H^i(H^s_{dR}(\cdot)), i\geq 0$ where $H^s_{dR}(\cdot)$ is the complex
\begin{multline}
\bigoplus_{0\leq k \leq d} H^s_{dR}(D_+(T_k)_{1})\rightarrow \!\!\!\!\!
\bigoplus_{0\leq k_1<k_2 \leq d} H^s_{dR}(D_+(T_{k_1})_{1} \cap D_+(T_{k_2})_{1})
\rightarrow \cdots  \\ \cdots \rightarrow H^s_{dR}(D_+(T_{0})_{1}\cap \cdots \cap D_+(T_d)_{1}).
\end{multline}

 Now again by \cite[III, Exercise 7.3]{Ha} $H^i(\bbP^d_K,\Omega^{\dag,s}) = H^i(\bbP^d_K,\Omega^s) \neq 0$  
 iff $i=s$ which is moreover then a one-dimensional 
 $K$-vector space.  This is exactly induced by $H^i(H^s_{dR}(\cdot))$ and all other
groups $H^i(H^s_{dR}(\cdot))$ vanish.  Thus we deduce that $H^i(\bbP^d_K,\Omega^{\dag,s}/D^{\dag,s})=0$ for all $i\geq 0.$
 \end{proof}

 As a byproduct we see that $D^{\dag,s}$ has the same cohomology on $\P^d$ as $\Omega^s,$ i.e. 
 \begin{equation}\label{same_coh}
  H^n(\P^d_K,D^{\dag,s})=H^n(\P^d_K,\Omega^s)
 \end{equation}
 for all $n\geq 0$

As for the next application, we consider  for any integer $0\leq j \leq d-1$, the complement 
$\P^d_K\setminus \P^j_K(\epsilon)$ of the tube $\P^j_K(\epsilon)$ in projective space. 
One checks that there is a covering
$$\P^d_K\setminus \P^j_K(\epsilon)=\bigcup_{k=j+1}^d V(k;\epsilon)$$
where 
 $$V(j+1;\epsilon)= 
\Big\{[x_0:\ldots:x_d] \in \P^d_K \mid\, |x_{j+1}| > |x_l|\cdot \epsilon \;\;  \forall \, l  < j+1 \Big\}$$
and
 $$V(k;\epsilon)= 
\Big\{[x_0:\ldots:x_d] \in \P^d_K \mid\,  |x_{k}| > |x_l|\cdot \epsilon \;\;  \forall \, l  < j+1 ,
 |x_{k}| > |x_l| \;\;  \forall \, j+1 \leq l < k \Big\}$$
for $k >j+1.$  These are admissible open subsets of $(\bbP_K^d)^{rig}$ which are even Stein spaces. The same holds true for arbitrary intersections
of them. More concretely, there is the following description.

\begin{Lemma}\label{lemma_V}
Let $I=\{i_1< \cdots < i_r\} \subset \{j+1,\ldots,d\}.$ Then 
$$ \bigcap_{k\in I} V(k;\epsilon) \cong  B^-(1/\epsilon)^{j+1} \times B^-(1)^{i_1-1-j} \times \prod_{k=2}^{r} \big(
C^- (0,1)  \times B^- (1)^{i_{k} - i_{k-1}-1} \big) 
\times \bbA^{d - i_r}_K$$
\end{Lemma}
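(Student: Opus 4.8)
The plan is to unwind the inequalities defining $W:=\bigcap_{k\in I}V(k;\epsilon)$ inside a suitable affine chart and then make an explicit monomial change of coordinates. First I would observe that on $W$ every coordinate $x_k$ with $k\in I$ is invertible: if $x_k=0$ then the inequality $|x_k|>\epsilon|x_l|$, which occurs in the definition of $V(k;\epsilon)$ for $0\le l\le j$ (a non-empty range since $j\ge 0$), fails. In particular $x_{i_r}\neq 0$ on $W$, so $W\subset D_+(T_{i_r})\cong\bbA^d_K$, and I normalise $x_{i_r}=1$ and work with the affine coordinates $x_0,\dots,x_{i_r-1},x_{i_r+1},\dots,x_d$. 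The crucial preliminary point is that the defining inequalities of $V(i_{k+1};\epsilon)$ contain $|x_{i_{k+1}}|>|x_{i_k}|$ (since $j+1\le i_k\le i_{k+1}-1$), so on $W$ one has the chain $|x_{i_1}|<|x_{i_2}|<\dots<|x_{i_r}|=1$.

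Next I would sort the $d$ free coordinates into blocks and, using the chain, extract the \emph{binding} inequality in each. For $0\le l\le j$ the conditions $|x_{i_k}|>\epsilon|x_l|$ for all $k$ have conjunction $|x_l|<|x_{i_1}|/\epsilon$; for $j+1\le l\le i_1-1$ the conditions $|x_{i_k}|>|x_l|$ amount to $|x_l|<|x_{i_1}|$; for $2\le k\le r$ and $i_{k-1}<l<i_k$ the only conditions touching $x_l$ are $|x_{i_{k'}}|>|x_l|$ with $k'\ge k$, whose conjunction is $|x_l|<|x_{i_k}|$; and for $i_r<l\le d$ no $x_l$ is constrained at all. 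The inequalities among the $x_{i_k}$'s themselves reduce to $|x_{i_k}|>|x_{i_{k-1}}|$ for $2\le k\le r$, together with $|x_{i_1}|>\epsilon|x_l|$ and $|x_{i_1}|>|x_l|$ from the first two blocks. I then have to verify that this reduced list implies \emph{all} the original inequalities; this is the main bookkeeping step, but it is immediate from the chain $|x_{i_1}|<\dots<|x_{i_r}|=1$, so $W$ is cut out in $\{x_{i_1}\cdots x_{i_{r-1}}\neq 0\}$ exactly by the reduced list.

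Finally I would perform the monomial change of coordinates on $\{x_{i_1}\cdots x_{i_{r-1}}\neq 0\}\supseteq W$: set $y_l=x_l/x_{i_1}$ for $0\le l\le i_1-1$, set $t_k=x_{i_k}/x_{i_{k+1}}$ for $1\le k\le r-1$ (recall $x_{i_r}=1$), set $y_l=x_l/x_{i_k}$ for $i_{k-1}<l<i_k$ with $2\le k\le r$, and keep $x_l$ unchanged for $i_r<l\le d$. This is an isomorphism onto its image with monomial inverse ($x_{i_k}=t_k t_{k+1}\cdots t_{r-1}$, and $x_l=y_l x_{i_1}$ or $x_l=y_l x_{i_k}$), hence an isomorphism of rigid analytic varieties. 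Under it the reduced inequalities become: $|y_l|<1/\epsilon$ for the $j+1$ coordinates with $l\le j$; $|y_l|<1$ for the $i_1-1-j$ coordinates with $j+1\le l\le i_1-1$; for each $k$ with $2\le k\le r$, $0<|t_{k-1}|<1$ (from $x_{i_{k-1}}\neq 0$ and $|x_{i_k}|>|x_{i_{k-1}}|$) together with $|y_l|<1$ for the $i_k-i_{k-1}-1$ coordinates in the block $i_{k-1}<l<i_k$; and no condition on the last $d-i_r$ coordinates. Reading these off gives exactly $B^-(1/\epsilon)^{j+1}\times B^-(1)^{i_1-1-j}\times\prod_{k=2}^r\big(C^-(0,1)\times B^-(1)^{i_k-i_{k-1}-1}\big)\times\bbA^{d-i_r}_K$, and the factor count telescopes to $d$. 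It only remains to note that the degenerate cases $r=1$, $i_1=j+1$, and $i_r=d$ are all covered by the same formulas with the corresponding empty products.
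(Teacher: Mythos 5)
Your proof is correct and follows essentially the same route as the paper: the same monomial change of coordinates (your $t_{k-1}=x_{i_{k-1}}/x_{i_k}$ is exactly the paper's coordinate $y_{i_{k-1}}$, and the remaining ratios agree), with an explicit monomial inverse. You merely supply the inequality bookkeeping (the chain $|x_{i_1}|<\cdots<|x_{i_r}|$ and the identification of the binding inequalities) that the paper leaves implicit when it asserts the image lies in the stated product.
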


\begin{proof}
We consider the map $\bigcap_{k\in I} V(k;\epsilon) \to \bbA^d $ defined by 
$[x_0:\cdots:x_d] \to (y_0,\ldots,y_{d-1})$ where $y_i=x_i/x_{i_1}$ for $i<i_1$, 
$y_i=x_i/x_{i_2}$ for $i_1 \leq i < i_2$,  $y_i=x_i/x_{i_3}$ for $i_2 \leq i < i_3$, \ldots, 
$y_i=x_i/x_{i_r}$ for $i_{r-1} \leq i < i_r$ and $y_i=x_i/x_{i_r}$ for $i > i_r.$
The image is contained in the RHS of the stated isomorphisms. We define an inverse morphism by
$(y_0,\ldots,y_{d-1}) \mapsto  [x_0:\cdots:x_d]$ with 
$x_i=y_i, i<i_1$, $x_{i_1}=1$, $x_i=y_iy_{i_1}^{-1}, i_1< i < i_2$, $x_{i_2}=y_{i_1}^{-1}$, $x_i=y_iy_{i_1}^{-1}y_{i_2}^{-1}, 
i_2 < i < i_3$, $x_{i_3}=y_{i_1}^{-1}y_{i_2}^{-1}, $ $x_i=y_i y_{i_1}^{-1}y_{i_2}^{-1}y_{i_3}^{-1}, i_3 < i < i_4,$ 
$x_{i_3}=y_{i_1}^{-1}y_{i_2}^{-1}y_{i_3}^{-1}$, \ldots, $x_i=y_i y_{i_1}^{-1}y_{i_2}^{-1}\cdots y_{i_r}^{-1}, i > i_r.$ 
\end{proof}

By Lemma \ref{lemma_V} and Corollary \ref{again_vanishing_D} we may compute the cohomology 
$H^{\ast}(\P^d_K\setminus \P^j_K(\epsilon_n),\cF^\dag)$ for 
 $\cF^\dag\in \Theta$ via the  \u{C}ech complex
 \begin{multline}\label{cechcomplex}
C_n^\bullet \cF^\dag : \bigoplus_{j+1\leq k \leq d} \cF^\dag(V(k;\epsilon_n)) \rightarrow \!\!\!\!\!\bigoplus_{j+1 \leq k_1<k_2 \leq d} 
\cF^\dag(V(k_1;\epsilon_n) \cap V(k_2;\epsilon_n)) \rightarrow \cdots  \\ \cdots 
\rightarrow \cF^\dag(V(j+1;\epsilon_n) \cap \cdots \cap V(d;\epsilon_n)) .
\end{multline}

\smallskip
\begin{Remark}\label{rmk_17}
In \cite{O} we proved via this approach that the cohomology groups
$H^\ast(\P^d_K\setminus \P^j_K(\epsilon_n),\Omega^s)=H^\ast(\P^d_K\setminus \P^j_K(\epsilon_n),\Omega^{\dag,s})$ 
and $H^\ast_{\P^j_K(\epsilon_n)}(\P^d_K,\Omega^s)=H^\ast_{\P^j_K(\epsilon_n)}(\P^d_K,\Omega^{\dag,s})$  are $K$-Fr\'echet spaces with  the structure of a continuous  
$P_{(j+1,d-j)}^n \ltimes U(\frg)$-module in which the algebraic cohomology $H^\ast(\P^d_K \setminus \P^j_K,\Omega ^s)$ resp.  
$H^\ast_{\P^j_K}(\P^d_K,\Omega^s)$   is a dense subspace.
Since the differential maps $d^s:\Omega^s(U) \to \Omega^{s+1}(U)$  are continuous for any open subvariety $U$ appearing 
in the  complex (\ref{cechcomplex}) and  $H^{\ast}(\P^d_K\setminus \P^j_K(\epsilon_n),D^{\dag,s}) 
\cap H^{\ast}(\P^d_K\setminus \P^j_K,\Omega^s)=H^{\ast}(\P^d_K\setminus \P^j_K,D^s)$ (cf. Prop. \ref{letzte_prop_1})  one checks now easily that the same is satisfied for the 
modules $H^{\ast}(\P^d_K\setminus \P^j_K(\epsilon_n),\cF^\dag)$ and $H^\ast_{\P^j_K(\epsilon_n)}(\P^d_K,\cF^\dag).$
This aspect can be precised as follows. For $j<  k \leq d$ and $\epsilon > \epsilon_n$, consider the 
open affinoid subvarietes
$$\bar{V}(j+1;\epsilon)= 
\Big\{[x_0:\ldots:x_d] \in \P^d_K \mid\, |x_{j+1}| \geq |x_l|\cdot \epsilon \;\;  \forall \, l  < j+1 \Big\}$$
and
 $$\bar{V}(k;\epsilon)= 
\Big\{[x_0:\ldots:x_d] \in \P^d_K \mid\,  |x_{k}| \geq |x_l|\cdot \epsilon \;\;  \forall \, l  < j+1 ,
 |x_{k}| \geq |x_l| \;\;  \forall \, j+1 \leq l < k \Big\}$$
and form the attached  \u{C}ech complex
 \begin{multline}\label{cechcomplex_b}
C_\epsilon^\bullet \cF : \bigoplus_{j+1\leq k \leq d} \cF(\bar{V}(k,\epsilon)) \rightarrow \!\!\!\!\!\bigoplus_{j+1 \leq k_1<k_2 \leq d} 
\cF(\bar{V}(k_1,\epsilon)\cap \bar{V}(k_2,\epsilon)) \rightarrow \cdots  \\ \cdots 
\rightarrow \cF(\bar{V}(j+1,\epsilon) \cap \cdots \cap \bar{V}(d,\epsilon)).
\end{multline}
If we denote by $U_\epsilon$ some open affinoid subvariety appearing in this complex (and similarly $U_{\epsilon'}$ for 
$\epsilon' < \epsilon$), 
then the restriction maps 
$\Omega^s(U_\epsilon) \to \Omega^s(U_{\epsilon'})$ of Banach spaces are 
injective, continuous and 
have dense image. It follows that the same holds true for the maps $D^s(U_\epsilon) \to D^s(U_{\epsilon'})$ of 
Banach spaces (!). As the
the map $d^s$ is even strict, we deduce that the the functor $\varprojlim_{\epsilon \to \epsilon_n}$ is exact with respect to 
the complexes  $C^\bullet_\epsilon \cF$. Since we additionally  have 
$\varprojlim_{\epsilon \to \epsilon_n} \cF(U_\epsilon)= 
\varprojlim_{\epsilon \to \epsilon_n} \cF^\dag(U_\epsilon)=\cF^\dag(U_{\epsilon_n})$   we may identify 
$H^i(\P^d_K\setminus \P^j_K(\epsilon),\cF^\dag)$ with the projective limit 
of $K$-Banach spaces $\varprojlim_{\epsilon \to \epsilon_n} H^i(C^\bullet_\epsilon \cF).$ 
\end{Remark}


Next we observe that     
  \begin{equation}\label{van_local_coh}
  H^n_{\P^j_K(\epsilon)}(\P^d_K,D^{\dag,s})=0 \,\,\,\ \forall n> d-j
 \end{equation}
 by the length of the \u{C}ech complex (\ref{cechcomplex}).
The following result is known for coherent sheaves  (by the smoothness of $\P^j_K$), cf. \cite{O}.

\begin{Lemma}\label{coh_vanishing}
Let $0\leq j \leq d-1$. Then the cohomology groups $H^i_{\P^j_K(\epsilon_n)}(\P^d_K,\Omega^{\dag,s}/D^{\dag,s})$ and 
$H^i_{\P^j_K(\epsilon_n)}(\P^d_K,D^{\dag,s})$ 
vanish for $i<d-j.$
\end{Lemma}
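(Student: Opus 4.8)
The strategy is to deduce the vanishing of local cohomology below degree $d-j$ from the long exact sequence relating $H^\ast(\P^d_K,-)$, $H^\ast(\P^d_K\setminus \P^j_K(\epsilon_n),-)$, and $H^\ast_{\P^j_K(\epsilon_n)}(\P^d_K,-)$, combined with the \v{C}ech description of the cohomology of the complement supplied by Lemma \ref{lemma_V}, Corollary \ref{again_vanishing_D} and the complex (\ref{cechcomplex}). First I would treat $\cF^\dag = D^{\dag,s}$. From the long exact sequence one has an isomorphism $H^i_{\P^j_K(\epsilon_n)}(\P^d_K,D^{\dag,s}) \cong H^{i-1}(\P^d_K\setminus \P^j_K(\epsilon_n),D^{\dag,s})$ for $1 < i$, using that $H^i(\P^d_K,D^{\dag,s})$ is finite-dimensional and concentrated in degree $s$ by (\ref{same_coh}) and Lemma \ref{vanishing_OmodD}; one has to be slightly careful in the low degrees $i=0,1$ and around $i=s$, but there the claim $i<d-j$ is handled directly since $\P^j_K(\epsilon_n)$ has codimension $d-j$. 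So it suffices to show $H^{i}(\P^d_K\setminus \P^j_K(\epsilon_n),D^{\dag,s})=0$ for $i < d-j-1$, together with the appropriate statement at the top of that range.

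Then I would compute $H^\ast(\P^d_K\setminus \P^j_K(\epsilon_n),D^{\dag,s})$ via the \v{C}ech complex (\ref{cechcomplex}), whose terms are $D^{\dag,s}(U)$ for $U$ a product of balls and annuli as in Lemma \ref{lemma_V}, to which Corollary \ref{again_vanishing_D} applies. The key point is a Künneth-type analysis: on such a product $U = \prod X_i^-$ the de Rham complex factors as a (dagger) tensor product of one-variable de Rham complexes, and $D^{\dag,s}(U)$ decomposes accordingly into summands built from the $D^{\dag,\ast}$ and $\Omega^{\dag,\ast}$ of the factors — exactly the kind of decomposition already used in the proof of Proposition \ref{vanishing_D}. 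Since each factor is either a closed/open ball or an annulus, its reduced de Rham cohomology is known (vanishing except the obvious $H^0$, and for the annulus an extra one-dimensional $H^1$), and one reads off that the \v{C}ech complex of the covering $\{V(k;\epsilon_n)\}_{k=j+1}^d$ computes, up to the de Rham cohomology of the ambient $\bbA^{d-i_r}_K$-factors, the cohomology of a simplicial object indexed by subsets of $\{j+1,\dots,d\}$. A combinatorial argument of the same type as in the proof of Lemma \ref{vanishing_OmodD} — comparing with the computation for $\Omega^s$ recalled in Remark \ref{rmk_17}, where the answer is already known by \cite{O} — then shows the cohomology of the complement is concentrated in the expected range, giving $H^i(\P^d_K\setminus \P^j_K(\epsilon_n),D^{\dag,s})=0$ for $i<d-j-1$.

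For $\cF^\dag = \Omega^{\dag,s}/D^{\dag,s}$ I would use the short exact sequence (\ref{exact_sequence}) and the long exact sequence of local cohomology it induces. Since $\Omega^{\dag,s}$ is coherent and $\P^j_K$ is smooth of codimension $d-j$, one has $H^i_{\P^j_K(\epsilon_n)}(\P^d_K,\Omega^{\dag,s})=0$ for $i<d-j$ by the coherent case recalled just before the statement (cf. \cite{O}), so the boundary maps force $H^i_{\P^j_K(\epsilon_n)}(\P^d_K,\Omega^{\dag,s}/D^{\dag,s}) \cong H^{i+1}_{\P^j_K(\epsilon_n)}(\P^d_K,D^{\dag,s})$ for $i<d-j-1$, and the remaining degree $i=d-j-1$ is pinned down by the same sequence together with the $D^{\dag,s}$-result; this yields the vanishing for $\Omega^{\dag,s}/D^{\dag,s}$ in the range $i<d-j$ as well.

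\textbf{Main obstacle.} The delicate part is not the formal long exact sequence manipulations but the explicit Künneth/combinatorial computation of $H^\ast(\P^d_K\setminus\P^j_K(\epsilon_n),D^{\dag,s})$ via (\ref{cechcomplex}): one must keep track of how the differential $d^{s-1}$ interacts strictly with the tensor decomposition on each product of annuli and balls (so that passing to kernels commutes with the relevant operations), and then organize the resulting multi-indexed combinatorics so as to match the known answer for $\Omega^s$ from \cite{O}. The strictness of the differentials — already invoked in the proof of Proposition \ref{vanishing_D} and in Remark \ref{rmk_17} — is what makes this bookkeeping go through, and it is the point that needs the most care.
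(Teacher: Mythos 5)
There is a genuine gap, and it sits exactly at the step you call the ``key point''. Your plan asserts that the K\"unneth/combinatorial analysis of the \v{C}ech complex (\ref{cechcomplex}) yields $H^{i}(\P^d_K\setminus \P^j_K(\epsilon_n),D^{\dag,s})=0$ for $i<d-j-1$. This is false whenever $s<d-j-1$: the complement of the tube is, up to the tube, an affine bundle over $\P^{d-j-1}_K$, so its de Rham cohomology does not vanish in low even degrees, and the sheaf $D^{\dag,s}$ sees exactly these classes. Concretely, the correct outcome of the \v{C}ech computation (this is what the paper's proof establishes, by induction on $s$, using on each piece $U$ of the covering the sections-level sequence $0\to(\Omega^{\dag,s-1}/D^{\dag,s-1})(U)\to D^{\dag,s}(U)\to H^s_{dR}(U)\to 0$ together with the induction hypothesis for $\Omega^{\dag,s-1}/D^{\dag,s-1}$) is that for $i<d-j-1$ one has $H^{i}(\P^d_K\setminus \P^j_K(\epsilon_n),D^{\dag,s})\cong H^{i}(H^s_{dR}(\cdot))\cong H^{i}(\P^d_K,\Omega^{\dag,s})$, which is one-dimensional for $i=s$, not zero. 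So what you must prove is not a vanishing of the complement cohomology but that the restriction map $H^{i}(\P^d_K,D^{\dag,s})\to H^{i}(\P^d_K\setminus\P^j_K(\epsilon_n),D^{\dag,s})$ is bijective in the relevant range; it is this bijectivity, fed into the long exact sequence, that kills $H^{i}_{\P^j_K(\epsilon_n)}(\P^d_K,D^{\dag,s})$ for $i<d-j$. Relatedly, your dismissal of the problematic degrees ``around $i=s$'' on the grounds that $\P^j_K(\epsilon_n)$ has codimension $d-j$ is circular: codimension bounds for local cohomology are available only for coherent sheaves (that is the remark preceding the lemma), and for the non-coherent sheaf $D^{\dag,s}$ the codimension bound is precisely the assertion of the lemma.

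A second, smaller circularity occurs in your treatment of $\Omega^{\dag,s}/D^{\dag,s}$ at the edge degree $i=d-j-1$. From the local cohomology sequence of (\ref{exact_sequence}), killing $H^{d-j-1}_{\P^j_K(\epsilon_n)}(\P^d_K,\Omega^{\dag,s}/D^{\dag,s})$ requires the injectivity of $H^{d-j}_{\P^j_K(\epsilon_n)}(\P^d_K,D^{\dag,s})\to H^{d-j}_{\P^j_K(\epsilon_n)}(\P^d_K,\Omega^{\dag,s})$, which does \emph{not} follow from the vanishing of $H^{i}_{\P^j_K(\epsilon_n)}(\P^d_K,D^{\dag,s})$ for $i<d-j$; in the paper this injectivity is exactly the left-exactness part of Proposition \ref{letzte_prop_1}, and it is deduced \emph{from} the present lemma, so invoking it here would be circular. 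The paper avoids both problems by exploiting Lemma \ref{vanishing_OmodD}: since $H^{\ast}(\P^d_K,\Omega^{\dag,s}/D^{\dag,s})=0$, the local cohomology of $\Omega^{\dag,s}/D^{\dag,s}$ is just the shifted cohomology of the complement, and one proves $H^{i}(\P^d_K\setminus\P^j_K(\epsilon_n),\Omega^{\dag,s}/D^{\dag,s})=0$ for $i<d-j-1$ by the induction on $s$ sketched above (base case $s=0$ via the constant sheaf and the coherent case for $\cO^{\dag}$), the $D^{\dag,s}$ statement then coming from the same identifications. Your proposal is missing this induction on $s$ as the organizing principle; the K\"unneth decomposition of Proposition \ref{vanishing_D} only serves to make the \v{C}ech complex compute the cohomology (which Corollary \ref{again_vanishing_D} already gives you), it does not by itself produce the claimed vanishing.
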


\begin{proof}
The case $j=d-1$ is trivial by Lemma \ref{vanishing_OmodD} and since $K=H^0(\P^d_K,D^{\dag,0}) = 
H^0(\P^d_K\setminus \P^j_K(\epsilon_n),D^{\dag,0})$  . So let $j<d-1$.

The proof is similar to Lemma \ref{lemma_V}. By Lemma \ref{vanishing_OmodD} we need to show that 
$H^i(\P^d_K\setminus \P^j_K(\epsilon_n),\Omega^{\dag,s}/D^{\dag,s})=0$ for $i<d-j-1.$ We consider the covering of $\P^d_K\setminus
\P^j_K(\epsilon_n) = \bigcup_{k>j} V(k;\epsilon_n).$ For $s=0$, we have again 
$H^i(\P^d_K\setminus \P^j_K(\epsilon_n),D^{\dag,0})=0$ 
for all $i>0$ and $H^0(\P^d_K\setminus \P^j_K(\epsilon_n),D^{\dag,0})=K.$
Since the stated vanishing is true for coherent sheaves we see that
$H^i(\P^d_K\setminus \P^j_K(\epsilon_n),\cO^\dag)=H^i(\P^d_K,\cO^\dag)$ for all such $i$ and the result follows.

For $s>0$ we reconsider the exact sequence (\ref{exact_sequence}). By induction hypothesis the statement is true for the 
sheaf $\Omega^{\dag,s-1}/D^{\dag,s-1}$. Hence $H^i(\P^d_K\setminus \P^j_K(\epsilon_n),D^{\dag,s})=H^i(H^s_{dR}(\cdot))$ for all $i<d-j-1$ 
where $H^s_{dR}(\cdot)$ is defined similar as before with respect to the above covering of 
$\P^d_K\setminus \P^j_K(\epsilon_n).$
  The latter term coincides
with $H^i(\P^d_K,\Omega^{\dag,s})$ which is thus $H^i(\P^d_K\setminus \P^j_K(\epsilon_n),\Omega^{\dag,s})$ 
 since $i<d-j-1$. As before the result follows.
 \end{proof}
 
 \medskip

\begin{Proposition}\label{letzte_prop_1}
There are for all $s\geq 1$,  strictly   exact sequences 
$$0 \to  H^{d-j}_{\P^j_K(\epsilon_n)}(\P^d_K,D^{\dag,s-1}) \to H^{d-j}_{\P^j_K(\epsilon_n)}(\P^d_K,\Omega^{\dag,s-1}) \to 
H^{d-j}_{\P^j_K(\epsilon_n)}(\P^d_K,\Omega^{\dag,s-1}/D^{\dag,s-1}) \to 0.$$
\end{Proposition}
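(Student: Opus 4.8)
The plan is to feed the short exact sequence of ${\bf G}$-equivariant sheaves
\[
0 \to D^{\dag,s-1} \to \Omega^{\dag,s-1} \to \Omega^{\dag,s-1}/D^{\dag,s-1} \to 0
\]
into the long exact sequence of local cohomology with support in $\P^j_K(\epsilon_n)$ (which exists since $H^\ast_{\P^j_K(\epsilon_n)}(\P^d_K,-)$ is a cohomological $\delta$-functor), collapse that sequence using the vanishing results already proved, and finally upgrade the resulting algebraic short exact sequence to a strictly exact one. Concretely, from the long exact sequence
\[
\cdots \to H^i_{\P^j_K(\epsilon_n)}(\P^d_K,D^{\dag,s-1}) \to H^i_{\P^j_K(\epsilon_n)}(\P^d_K,\Omega^{\dag,s-1}) \to H^i_{\P^j_K(\epsilon_n)}(\P^d_K,\Omega^{\dag,s-1}/D^{\dag,s-1}) \to H^{i+1}_{\P^j_K(\epsilon_n)}(\P^d_K,D^{\dag,s-1}) \to \cdots
\]
of $P_{(j+1,d-j)}^n \ltimes U(\frg)$-modules, Lemma \ref{coh_vanishing} makes $H^i_{\P^j_K(\epsilon_n)}(\P^d_K,D^{\dag,s-1})$ and $H^i_{\P^j_K(\epsilon_n)}(\P^d_K,\Omega^{\dag,s-1}/D^{\dag,s-1})$ vanish for $i<d-j$, while $(\ref{van_local_coh})$ makes $H^i_{\P^j_K(\epsilon_n)}(\P^d_K,D^{\dag,s-1})$ vanish for $i>d-j$. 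Hence the connecting map into $H^{d-j}_{\P^j_K(\epsilon_n)}(\P^d_K,D^{\dag,s-1})$ has source $H^{d-j-1}_{\P^j_K(\epsilon_n)}(\P^d_K,\Omega^{\dag,s-1}/D^{\dag,s-1})=0$, and $H^{d-j+1}_{\P^j_K(\epsilon_n)}(\P^d_K,D^{\dag,s-1})=0$; what survives is exactly the claimed short exact sequence of $P_{(j+1,d-j)}^n \ltimes U(\frg)$-modules.

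It then remains to check strict exactness. By Remark \ref{rmk_17} each of the three terms in degree $d-j$ is a $K$-Fr\'echet space, and the two maps of the sequence, being induced by the morphisms of sheaves and hence by morphisms of the associated \v{C}ech complexes, are continuous. Since $H^{d-j}_{\P^j_K(\epsilon_n)}(\P^d_K,\Omega^{\dag,s-1}) \to H^{d-j}_{\P^j_K(\epsilon_n)}(\P^d_K,\Omega^{\dag,s-1}/D^{\dag,s-1})$ is a continuous surjection of $K$-Fr\'echet spaces it is open, hence a strict quotient map, and its kernel — which equals the image of $H^{d-j}_{\P^j_K(\epsilon_n)}(\P^d_K,D^{\dag,s-1})$ — is closed; the continuous bijection of $H^{d-j}_{\P^j_K(\epsilon_n)}(\P^d_K,D^{\dag,s-1})$ onto that closed (hence Fr\'echet) subspace is then a topological isomorphism, again by the open mapping theorem, so the sequence is strictly exact. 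Alternatively, and staying closest to the setup of Remark \ref{rmk_17}, one argues at each finite level $\epsilon>\epsilon_n$: the affinoids $\bar V(k;\epsilon)$ and their intersections are Stein, so by Corollary \ref{vanishing_D_Y} (together with Proposition \ref{vanishing_D} and Corollary \ref{again_vanishing_D}) the \v{C}ech complexes $C^\bullet_\epsilon$ compute the cohomology of all three sheaves and form a term-wise exact sequence of complexes of $K$-Banach spaces; the strictness of $d^{s-1}$ already exploited in Remark \ref{rmk_17} makes the induced sequence on $H^{d-j}$ strictly exact at finite level, and passing to $\varprojlim_{\epsilon \to \epsilon_n}$, whose transition maps have dense image, preserves strict exactness by the topological Mittag--Leffler argument of \cite[\S 2]{SS}.

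The main obstacle is this strictness, not the algebraic exactness: one must ensure that all three local cohomology groups in degree $d-j$ are Hausdorff $K$-Fr\'echet spaces (so that the open mapping theorem is available), and that their identification with the projective limits of the Banach-space cohomologies coming from the complexes $C^\bullet_\epsilon$ is compatible with the connecting homomorphisms and comparison maps induced by the short exact sequence of sheaves. This is precisely where the input of Remark \ref{rmk_17} — strictness of $d^{s-1}$, density of the restriction maps between the relevant Banach spaces, and exactness of $\varprojlim_{\epsilon \to \epsilon_n}$ on the complexes $C^\bullet_\epsilon$ — has to be invoked.
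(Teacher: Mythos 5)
Your overall skeleton agrees with the paper's: feed the short exact sequence of sheaves into the long exact sequence of local cohomology with support in $\P^j_K(\epsilon_n)$, get exactness on the left from Lemma \ref{coh_vanishing}, and obtain strictness from continuity of $d^{s-1}$ and the Fr\'echet structure of the terms (the paper settles the topological point in one line, by noting that $D^{\dag,s-1}(X)$ is closed in $\Omega^{\dag,s-1}(X)$ for every $X$ occurring in the \v{C}ech complex; your open-mapping/Mittag--Leffler elaboration is a more detailed version of the same point and is fine).

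The gap is in your right-exactness step. You invoke (\ref{van_local_coh}) to claim $H^{d-j+1}_{\P^j_K(\epsilon_n)}(\P^d_K,D^{\dag,s-1})=0$, but that blanket vanishing cannot be taken at face value: the \v{C}ech complex (\ref{cechcomplex}) only gives $H^i(\P^d_K\setminus\P^j_K(\epsilon_n),D^{\dag,s-1})=0$ for $i\geq d-j$, and plugging this into the long exact sequence relating the complement, the local cohomology and $\P^d_K$ yields $H^{n}_{\P^j_K(\epsilon_n)}(\P^d_K,D^{\dag,s-1})\cong H^{n}(\P^d_K,D^{\dag,s-1})=H^{n}(\P^d_K,\Omega^{s-1})$ for $n>d-j$ by (\ref{same_coh}), which is one-dimensional (not zero) when $n=s-1$. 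Thus for $s=d-j+2$ (possible whenever $j\geq 2$) the group you want to kill is isomorphic to $K$, and your argument breaks down exactly there. The paper's proof is written to avoid this: instead of vanishing, it shows that the map $H^{d-j+1}_{\P^j_K(\epsilon_n)}(\P^d_K,D^{\dag,s-1})\to H^{d-j+1}_{\P^j_K(\epsilon_n)}(\P^d_K,\Omega^{\dag,s-1})$ is an isomorphism, because both sides are identified with the (at most one-dimensional) degree-$(d-j+1)$ cohomology of $\P^d_K$, using the vanishing of the complement cohomology in degrees $\geq d-j$ together with (\ref{same_coh}); the injectivity of this map is what forces the connecting homomorphism out of $H^{d-j}_{\P^j_K(\epsilon_n)}(\P^d_K,\Omega^{\dag,s-1}/D^{\dag,s-1})$ to vanish, hence surjectivity. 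Replacing your appeal to (\ref{van_local_coh}) by this comparison repairs the proof; the remainder of your argument then goes through.
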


\begin{proof}
By Lemma \ref{coh_vanishing} the above sequence is exact from the left. As for the right exactness
we claim that the map $H^{d-j+1}_{\P^j_K(\epsilon_n)}(\P^d_K,D^{\dag,s-1}) \to H^{d-j+1}_{\P^j_K(\epsilon_n)}(\P^d_K,\Omega^{\dag,s-1})$
is an isomorphism. For verifying this, we consider
the long exact sequence 
$$\ldots \to  H^{d-j}(\P^d_K\setminus {\P^j_K(\epsilon_n)},D^{\dag,s-1}) \to H^{d-j+1}_{\P^j_K(\epsilon_n)}(\P^d_K,D^{\dag,s-1}) 
\to H^{d-j+1}(\P^d_K,D^{\dag,s-1}) \to \ldots$$
The term $H^{d-j}(\P^d_K\setminus {\P^j_K(\epsilon_n)},D^{\dag,s-1})$ vanishes by the length of the \u{C}ech complex.
Further $H^{d-j+1}(\P^d_K,D^{\dag,s-1})=H^{d-j+1}(\P^d_K,\Omega^{\dag,s-1}) $ by identity (\ref{same_coh}) and 
$H^{d-j+1}(\P^d_K,\Omega^{s-1})= 
H^{d-j+1}_{\bbP^j(\epsilon_n)}(\P^d_K,\Omega^{\dag,s-1})$ by the same reasoning. Since the vector spaces in question are at 
most one-dimensional the  claim follows.

As for the topological statement  this follows easily from
the fact that the differential map $d^{s-1}$ is continuous and therefore $D^{\dag,s-1}(X)$ is closed in 
$\Omega^{\dag,s-1}(X)$ for every rigid analytic variety $X$ appearing in the \u{C}ech complex. 
\end{proof}

\begin{Remark}\label{remark_cato} \rm
The sequence
$$0 \to  H^{d-j}_{\P^j_K}(\P^d_K,D^{\dag,s-1}) \to H^{d-j}_{\P^j_K}(\P^d_K,\Omega^{\dag,s-1}) \to 
H^{d-j}_{\P^j_K}(\P^d_K,\Omega^{\dag,s-1}/D^{\dag,s-1}) \to 0$$
consisting of analytic local cohomology groups is exact, as well. This follows by taking the projective limit 
$\varprojlim_n$ and applying the topological Mittag-Leffler criterion.
By density this fact is also true for the corresponding sequence of algebraic local cohomology groups of schemes  
$$0 \to  H^{d-j}_{\P^j_K}(\P^d_K,D^{s-1}) \to H^{d-j}_{\P^j_K}(\P^d_K,\Omega^{s-1}) \to H^{d-j}_{\P^j_K}(\P^d_K,\Omega^{s-1}/D^{s-1}) \to 0.$$
In fact this sequence is the same as the pull back of the above sequence to $H^{d-j}_{\P^j_K}(\P^d_K,\Omega^{s-1}).$
\end{Remark}

\medskip
\begin{Remark}
\rm Since  the (strong) topological dual $H^{d-j}_{\P^j_K(\epsilon_n)}(\P^d_K,\Omega^{s-1})'$ is a locally analytic 
$P_{(j+1,d-j)}^n$-representation by \cite[Cor. 1.3.9]{O}, the same is true for 
$H^{d-j}_{\P^j_K(\epsilon_n)}(\P^d_K,\Omega^{s-1}/D^{s-1})'$
as a closed $P_{(j+1,d-j)}^n$-stable subspace. 
\end{Remark}

\medskip
\section{Some local pro-\'etale cohomology groups}

In the sequel we denote for a rigid analytic variety $X$ over $K$ by $X_{\bbC_p}$ its base change to $\bbC_p.$ 
We shall determine  in this section the local pro-\'etale cohomology groups $H^\ast_{\P^j_{\bbC_p}(\epsilon_n)}(\P^d_{\bbC_p},\bbQ_p).$

As usual there is a long exact cohomology sequence
\begin{eqnarray}\label{long_exact}
\cdots & \rightarrow & H^{i-1}(\P^d_{\bbC_p}\setminus \P^j_{\bbC_p}(\epsilon_n),\bbQ_p) \rightarrow 
H^i_{\P^j_{\bbC_p}(\epsilon_n)}(\P^d_{\bbC_p},\bbQ_p) \rightarrow H^i(\P^d_{\bbC_p},\bbQ_p) \\ \nonumber
& \rightarrow & H^i(\P^d_{\bbC_p} \setminus \P^j_{\bbC_p}(\epsilon_n),\bbQ_p) \rightarrow \cdots .
\end{eqnarray}
As for the computation of $H^i(\P^d_{\bbC_p}\setminus \P^j_{\bbC_p}(\epsilon_n), \bbQ_p)$, we consider the 
spectral sequence
$$ E_1^{p,q} \Rightarrow E^{p+q}=H^{p+q}(\P^d_{\bbC_p}\setminus \P^j_{\bbC_p}(\epsilon_n), \bbQ_p)$$
with respect to the covering of Stein spaces 
$$\P^d_{\bbC_p}\setminus \P^j_{\bbC_p}(\epsilon_n)=\bigcup_{k=j+1}^d V(k;\epsilon_n)_{\bbC_p}.$$ 
The line $E^{\bullet,s}$ is given by the complex
\begin{multline}\label{Cechanal}
\bigoplus_{j+1\leq k \leq d} H^s((V(k;\epsilon_n)_{\bbC_p},\bbQ_p) \rightarrow \!\!\!\!\!
\bigoplus_{j+1 \leq k_1<k_2 \leq d} H^s(V(k_1;\epsilon_n)_{\bbC_p}\cap V(k_2;\epsilon_n)_{\bbC_p},\bbQ_p) 
\rightarrow \\ \cdots \rightarrow H^s(V(j+1;\epsilon_n)_{\bbC_p}\cap \cdots \cap V(d;\epsilon_n)_{\bbC_p},\bbQ_p).
\end{multline}

Let $U=V(k_1;\epsilon_n) \cap V(k_2;\epsilon_n) \cap \ldots \cap  V(k_r;\epsilon_n)$ be 
some intersection of these Stein spaces which appear in the above complex. Then this is a Stein space, as well, and the 
geometric pro-\'etale  cohomology has the following description.
\begin{Proposition}\label{prop_proetale_punctured_balls}
For $s\geq 0$, there is an extension 
$$0 \to \Omega^{s-1}(U)/D^{s-1}(U)\hat{\otimes}_K \bbC_p(-s)  \to H^s(U_{\bbC_p},\bbQ_p)\to H^s_{dR}(U,\bbQ_p)(-s) \to 0 $$ 
 where $H^s_{dR}(U,\bbQ_p)=\wedge^s (\bbQ_p^{r-1})$ is a $\bbQ_p$-vector space\footnote{ which can be expressed via  
 Hyodo-Kato cohomology} with $H^s_{dR}(U,\bbQ_p)\otimes_{\bbQ_p} K=H^s_{dR}(X).$
 \end{Proposition}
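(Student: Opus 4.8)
The plan is to compute the geometric pro-\'etale cohomology of the Stein space $U$ by following the strategy of Le Bras \cite{LB}, which expresses such cohomology via the fundamental exact sequence relating pro-\'etale cohomology, de Rham cohomology, and differential forms. Concretely, for a smooth Stein space $U$ over $K$ one has a short exact sequence of the shape
$$0 \to \Omega^{s-1}(U)/D^{s-1}(U)\,\widehat{\otimes}_K\,\bbC_p(-s) \to H^s(U_{\bbC_p},\bbQ_p(s)) \to \big(H^s_{dR}(U)\otimes_K \bbC_p\big)^{\varphi=1}(-s)\ \cdots$$
but in fact what one needs here is the simpler fact (already in \cite{LB}, and reproved in \cite{Bo}) that when $U$ is a product of open balls and punctured (open) discs, the de Rham cohomology $H^\ast_{dR}(U)$ is the exterior algebra on the classes $\frac{dz_i}{z_i}$ coming from the punctured-disc factors, hence a finite-dimensional $K$-vector space with a canonical $\bbQ_p$-structure $H^s_{dR}(U,\bbQ_p)=\wedge^s(\bbQ_p^{r-1})$ (one punctured disc for each of the $r-1$ ``extra'' intersection conditions, as one reads off from the explicit isomorphism in Lemma \ref{lemma_V}). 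First I would record, via Lemma \ref{lemma_V}, the precise description of $U$ as $B^-(1/\epsilon_n)^{j+1}\times(\text{balls})\times (C^-(0,1))^{r-1}\times(\text{balls})\times\bbA^{d-i_r}_K$, so that the number of punctured-disc factors is exactly $r-1$ and $H^s_{dR}(U)\cong \wedge^s(K^{r-1})$ by the K\"unneth formula together with the computation of de Rham cohomology of the open disc (trivial in positive degree) and of the punctured open disc (one-dimensional in degrees $0$ and $1$, generated by $dz/z$).

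Next I would invoke the comparison machinery. The key input is that for a Stein space $U$ admitting a semistable (weak) formal model — or simply using the general result of Colmez--Niziol \cite{CN2} for smooth dagger varieties — there is a strictly exact sequence
$$0 \to \big(\Omega^{s-1}(U)/\ker d^{s-1}\big)\,\widehat{\otimes}_K\,\bbC_p \to H^s(U_{\bbC_p},\bbQ_p(s)) \to H^s_{dR}(U,\bbQ_p)\otimes_{\bbQ_p}\bbC_p \cdots$$
Wait — that is not quite the shape stated; the stated target is $H^s_{dR}(U,\bbQ_p)(-s)$ with no further twist or $\bbC_p$-tensor, reflecting that $H^s_{dR}(U,\bbQ_p)$ is already a $\bbQ_p$-lattice inside $H^s_{dR}(U)$ on which Frobenius acts trivially (each generator $\frac{dz_i}{z_i}$ is a Tate class, $\varphi=1$ after the appropriate normalization), so the ``syntomic/Frobenius-fixed'' part is all of $H^s_{dR}(U,\bbQ_p)$. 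Thus the plan is: (i) apply the Le Bras/Colmez--Niziol fundamental sequence for $U_{\bbC_p}$, getting $0\to (\text{coker of }d^{s-1})\widehat\otimes\bbC_p(-s)\to H^s(U_{\bbC_p},\bbQ_p)\to \mathrm{Fil}^?\cdots\to 0$; (ii) identify the quotient term using that $H^s_{HK}(U)$ is pure with trivial Frobenius on each wedge of $\frac{dz_i}{z_i}$, so the quotient is precisely $H^s_{dR}(U,\bbQ_p)(-s)$; (iii) note $\Omega^{s-1}(U)/D^{s-1}(U)=\Omega^{\dag,s-1}(U)/D^{\dag,s-1}(U)$ since $U$ is Stein (as recalled in \S1), and that by Corollary \ref{vanishing_D_Y} this also equals $H^0(U,\Omega^{\dag,s-1}/D^{\dag,s-1})$, which makes the first term intrinsic.

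The main obstacle is controlling the Frobenius and filtration data on the Hyodo--Kato side so that the quotient term comes out as the bare $\bbQ_p$-vector space $H^s_{dR}(U,\bbQ_p)(-s)$ rather than something involving a $\bbC_p$-tensor or a nontrivial $\varphi$-eigenspace computation; this requires knowing that on a product of (punctured) discs the Hyodo--Kato cohomology is generated by the Tate classes $\frac{dz_i}{z_i}$ with $N=0$ and $\varphi$ acting as multiplication by $q$ in degree one (so that, after the Tate twist $(s)$ normalizing to weight zero, $\varphi$ is the identity and $(\ )^{\varphi=1}$ is everything). For the open-disc and punctured-open-disc factors this is exactly Le Bras's computation \cite{LB}, and the general case follows by K\"unneth for pro-\'etale (and Hyodo--Kato) cohomology of the Stein space $U$; one must also check that the relevant sequences remain strictly exact under the tensor products and direct limits involved, which is why the statement carries the footnote flagging that strict exactness here is conditional on the analogous property of the building-block sequences. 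A secondary technical point is that $U$ has a factor $\bbA^{d-i_r}_K$ of affine lines, which contribute nothing to either $H^\ast_{dR}$ or $H^\ast_{dR}(\cdot,\bbQ_p)$ in positive degree and only $K$ (resp.\ $\bbQ_p$) in degree zero, so they can be absorbed harmlessly; I would dispatch this at the start by K\"unneth, reducing to the case where $U$ is a finite product of open balls, punctured open discs, and one affine space.
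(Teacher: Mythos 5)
Your plan arrives at the right statement, but by a genuinely different route from the paper. The paper does not invoke the syntomic/Hyodo--Kato comparison theorems of \cite{CDN} or \cite{CN2} here --- avoiding that machinery is the stated purpose of the whole article. Instead it works directly with the short exact sequence of pro-\'etale period sheaves $0\to\bbQ_p\to\bbB[1/t]^\varphi\to\bbB_{dR}/\bbB^+_{dR}\to0$ on $U_{\bbC_p}$ and computes the cohomology of each period sheaf by entering the proofs of \cite[Prop.\ 3.17, Prop.\ 3.19, Prop.\ 3.22, Cor.\ 3.31]{LB}, using the explicit product description of $U$ from Lemma \ref{lemma_V}: one obtains $H^i(U_{\bbC_p},\bbB^+_{dR})$ as an extension of $\Omega^{i-1}(U_{\bbC_p})/D^{i-1}(U_{\bbC_p})(-i)$ by $H^i_{dR}(U_{\bbC_p},\bbC_p)(-i)\otimes B^+_{dR}$ (after rewriting $\ker(d_i)$ and passing to the limit over $k$ in $\bbB^+_{dR}/t^k$), the identification $H^i(U_{\bbC_p},\bbB_{dR})=H^i_{dR}(U_{\bbC_p},\bbC_p)(-i)\otimes B_{dR}$, and $H^i(U_{\bbC_p},\bbB[1/t]^\varphi)=H^i_{dR}(U,\bbQ_p)\otimes B[1/t]^\varphi$; the long exact sequence then yields the proposition with the $\bbQ_p$-structure on the quotient built in and no Frobenius-eigenspace bookkeeping. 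Your route --- quote the fundamental exact sequence of \cite{CDN}/\cite{CN2} for $U$ and identify the Hyodo--Kato term as $\wedge^s(\bbQ_p^{r-1})$ on which $\varphi$ acts by the expected power, so that the relevant eigenspace is the bare $\bbQ_p$-form --- is legitimate (the paper's own remark after the proposition notes that \cite{CN2} should cover it), and it buys brevity at the cost of much heavier input and of forfeiting the ``independent of \cite{CDN}'' character of the argument. If you pursue it, three points need tightening: \cite{CDN} requires a semistable weak formal model of the Stein space, which is not obviously available for the products appearing in Lemma \ref{lemma_V}, so you should rely on \cite{CN2}; once \cite{CN2} is applied to $U$ itself you neither need nor should casually invoke a K\"unneth formula for pro-\'etale cohomology of Stein spaces (a nontrivial statement) --- K\"unneth is only needed on the de Rham/Hyodo--Kato side, where it follows from the explicit coordinates; and the Frobenius normalization (``multiplication by $q$'' versus $p$, $\bbQ_p$-linear versus semilinear) must be fixed consistently for the eigenspace computation to return exactly $H^s_{dR}(U,\bbQ_p)(-s)$.
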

 
 \begin{proof}
  The proof is essentially contained in \cite{LB} as we shall explain.  At first we consider the short exact sequence of 
  pro-\'etale sheaves 
  \begin{equation}\label{ses}
  0 \to \bbQ_p \to \bbB[1/t]^\varphi \to \bbB_{dR}/\bbB^+_{dR}\to 0 
  \end{equation}
  on $U_{\bbC_p}$
and determine the cohomology of the period sheaves. As for latter sheaf, we deduce by
entering the proof of \cite[Prop. 3.17]{LB} that for $k>d$ the expression $H^i(U_{\bbC_p},\bbB^+_{dR}/t^k)$ is a successive extension
$$0 - {\rm coker}(d_i)(k-i-1) - H^i_{dR}(U_{\bbC_p},\bbC_p)(k-i-2) - \cdots - H^i_{dR}(U_{\bbC_p},\bbC_p)(-i-1) - {\rm ker}(d_i)(-i) - 0$$  
  (the terms in the middle are all Tate twists of $H^i_{dR}(U_{\bbC_p},\bbC_p)$).
 Now we write ${\rm ker}(d_i)(-i)$ as a (split) extension 
 $0 \to \Omega^{i-1}(U_{\bbC_p})/D^{i-1}(U_{\bbC_p})(-i) \to {\rm ker}(d_i)(-i) \to H^i_{dR}(U_{\bbC_p},\bbC_p)(-i) \to 0$ so 
 that the above expression can be rewritten as a successive extension
 as
 $$0 - {\rm coker}(d_i)(k-i-1) - H^i_{dR}(U_{\bbC_p},\bbC_p)(k-i-2) - \cdots - H^i_{dR}(U_{\bbC_p},\bbC_p)(-i) - \Theta_i - 0$$ with
 $$\Theta_i:=\Omega^{i-1}(U_{\bbC_p})/D^{i-1}(U_{\bbC_p})(-i). $$
 Passing to the limit as $k \to \infty$ we get an extension
 $$ 0 \to  H^i_{dR}(U_{\bbC_p},\bbC_p)(-i) \otimes B^+_{dR} \to  H^i(U_{\bbC_p},\bbB^+_{dR}) \to \Theta_i \to 0.$$
 
As for the sheaf $\bbB_{dR}$ we follow the reasoning in \cite[Prop. 3.19]{LB} to conclude that
$$H^i(U_{\bbC_p},\bbB_{dR})= H^i_{dR}(U_{\bbC_p},\bbC_p)(-i) \otimes B_{dR}.$$ Hence we see via the long exact cohomology sequence that $H^i_{dR}(U_{\bbC_p},\bbB_{dR}/\bbB^+_{dR})$ is an extension
 $$ 0 \to  H^i_{dR}(U_{\bbC_p},\bbC_p)(-i) \otimes B_{dR}/B^+_{dR} \to  H^i(U_{\bbC_p},\bbB_{dR}/\bbB^+_{dR}) \to \Theta_{i+1} \to 0.$$
 
Concerning the sheaf $\bbB[1/t]^\varphi$ we use Lemma \ref{lemma_V}, \cite[Cor. 3.31]{LB} and  the proof of \cite[Prop. 3.22]{LB} to deduce that
$$H^i(U_{\bbC_p},\bbB[1/t]^\varphi)= H^i_{dR}(U_{\bbC_p},\bbQ_p)\otimes B[1/t]^\varphi.$$
Considering the long exact cohomology sequence attached to the short exact sequence (\ref{ses}) we get the claim.
\end{proof}

\begin{Remark}
It seems that this result is also covered by \cite[Theorem 1.1 and Theorem 1.3]{CN2}. For open balls this was done before 
by Colmez and Niziol \cite[Theorem 3]{CN1} resp. Le Bras \cite[Theorem 2.3.2]{LB}. 
\end{Remark}

 Hence we my write $E_1^{\bullet,\bullet}$ as an extension
$$0 \to F_1^{r,s} \to E^{r,s}_1 \to G_1^{r,s} \to 0$$
of double complexes, as well.

The cohomology of the double complex $G_1^{r,s}$ gives rise to a $\bbQ_p$-form $H^\ast_{dR}(\bbP^d_K/\bbP^j_K(\epsilon_n),\bbQ_p)$
of  the de Rham cohomology $H^\ast_{dR}(\bbP^d_K/\bbP^j_K(\epsilon_n))$ which is $\bigoplus_{i=0}^j \bbQ_p[-2i]$.
On the other hand, by Corollary \ref{vanishing_D_Y} and by the lines before Remark \ref{rmk_17} the $F_1^{\bullet,s}$-term just computes the cohomology group
$H^\ast(\P^d_K\setminus \P^j_K(\epsilon_n),\Omega^{\dag,s-1}/D^{\dag,s-1})\hat{\otimes}_K \bbC_p(-s).$ 
More precisely, 
$$F_2^{r,s}=H^r(\P^d_K\setminus \P^j_K(\epsilon_n),\Omega^{\dag,s-1}/D^{\dag,s-1})\hat{\otimes}_K \bbC_p(-s)$$ for all $r,s\geq 0.$
The contributions $H^r(\P^d_K\setminus \P^j_K(\epsilon),\Omega^{\dag,s-1}/D^{\dag,s-1})$ vanish (by considering the \u{C}ech complex) 
for $r \geq d-j.$ Further, we have the long exact 
cohomology sequence
\begin{eqnarray*}
\ldots \rightarrow & H^i_{\bbP^j_K(\epsilon_n)}(\P^d_K,\Omega^{\dag,s-1}/D^{\dag,s-1}) & \rightarrow H^i(\P^d_K,\Omega^{\dag,s-1}/D^{\dag,s-1}) \rightarrow H^i(\P^d_K/\P_K^j(\epsilon_n),\Omega^{\dag,s-1}/D^{\dag,s-1})\\ 
\rightarrow & H^{i+1}_{\bbP^j_K(\epsilon_n)}(\P^d_K,\Omega^{\dag,s-1}/D^{\dag,s-1}) & \rightarrow \ldots 
\end{eqnarray*}
The expressions $H^i(\P^d_K,\Omega^{\dag,s-1}/D^{\dag,s-1})$ vanish by Lemma \ref{vanishing_OmodD} for all $i\geq 0.$ Hence we get
$$H^i(\P^d_K\setminus \P^j_K(\epsilon_n),\Omega^{\dag,s-1}/D^{\dag,s-1})=H^{i+1}_{\P^j_K(\epsilon_n)}(\P^d_K,\Omega^{\dag,s-1}/D^{\dag,s-1})$$ for
all $i\geq 0.$
Since $H^{i}_{\P^j_K(\epsilon)}(\P^d_K,\Omega^{\dag,s-1}/D^{\dag,s-1})=0$ for $i<d-j$  by Lemma \ref{coh_vanishing}
we deduce that $F^{r,s}_2=0$ for $r\neq d-j-1$. Hence the $E_2$-term consists of two lines.

\setlength{\unitlength}{1cm}
\begin{picture}(12,8)
\put(1,0){\vector(0,1){7}}
\put(1,0){\vector(1,0){10}}
\put(-0.3,6.5){$s$}
\put(2,-0.1){$|$}
\put(2,-0.5){$1$}
\put(2,1){$0$}
\put(3,-0.1){$\mid$}
\put(3,-0.5){$2$}
\put(6,5){$\bbQ_p$}
\put(4,2){$G_2$}
\put(3,2){\reflectbox{$\ddots$}}
\put(5,4){$0$}

\put(0.8,1){$-$}
\put(0,1){1}
\put(0.8,2){$-$}
\put(0,2){2}

\put(6.5,-0.5){$\ldots$}
\put(0.8,5){$-$}
\put(0,5){$j+1$}
\put(8,-0.5){}
\put(10,-0.1){$\mid$}
\put(10,-0.5){$d-j-1$}
\put(10,1){$\bullet$}
\put(10,2){$\bullet$}
\put(11,2){$F_2$}
\put(10,3){$\bullet$}
\put(10,4){$\vdots$}

\put(12,-1){$r$}

\put(6.5,5.5){\reflectbox{$\ddots$}}

\end{picture}

\vspace{1cm}

\begin{Lemma}\label{not_include_trivial}
 Considered as $U(\frg)$-module the representations $F_2^{d-j-1,s}, s=0,\ldots d,$ do not include the trivial representation as 
 composition factor.
\end{Lemma}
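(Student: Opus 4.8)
The key identification is
$$F_2^{d-j-1,s}=H^{d-j-1}(\P^d_K\setminus \P^j_K(\epsilon_n),\Omega^{\dag,s-1}/D^{\dag,s-1})\hat\otimes_K\bbC_p(-s)\cong H^{d-j}_{\P^j_K(\epsilon_n)}(\P^d_K,\Omega^{\dag,s-1}/D^{\dag,s-1})\hat\otimes_K\bbC_p(-s),$$
the last isomorphism coming from the long exact sequence and Lemma \ref{vanishing_OmodD} as explained in the paragraph preceding the picture. So it suffices to show that $H^{d-j}_{\P^j_K(\epsilon_n)}(\P^d_K,\Omega^{\dag,s-1}/D^{\dag,s-1})$, viewed as a $U(\frg)$-module, does not admit the trivial representation $K$ as a composition factor. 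By Proposition \ref{letzte_prop_1} there is a strictly exact sequence
$$0\to H^{d-j}_{\P^j_K(\epsilon_n)}(\P^d_K,D^{\dag,s-1})\to H^{d-j}_{\P^j_K(\epsilon_n)}(\P^d_K,\Omega^{\dag,s-1})\to H^{d-j}_{\P^j_K(\epsilon_n)}(\P^d_K,\Omega^{\dag,s-1}/D^{\dag,s-1})\to 0,$$
so the trivial representation appears as a composition factor of the quotient only if it appears in $H^{d-j}_{\P^j_K(\epsilon_n)}(\P^d_K,\Omega^{\dag,s-1})$ but not, with the same multiplicity, in the subobject $H^{d-j}_{\P^j_K(\epsilon_n)}(\P^d_K,D^{\dag,s-1})$. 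By identity (\ref{same_coh}) together with Remark \ref{rmk_17} (the density statement), $H^{d-j}_{\P^j_K(\epsilon_n)}(\P^d_K,D^{\dag,s-1})$ and $H^{d-j}_{\P^j_K(\epsilon_n)}(\P^d_K,\Omega^{\dag,s-1})$ have the same algebraic local cohomology $H^{d-j}_{\P^j_K}(\P^d_K,\Omega^{s-1})$ as a dense $U(\frg)$-stable subspace; hence I would argue that it is enough to treat the situation of coherent sheaves and compare the $U(\frg)$-socle filtrations on the algebraic local cohomology modules, which is exactly what was analysed in \cite{O}.

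\textbf{Key steps, in order.} First I would reduce, via Proposition \ref{letzte_prop_1} and the compatibility of the strictly exact sequences with passage to algebraic local cohomology (Remark \ref{remark_cato}), to the purely algebraic statement: the trivial $U(\frg)$-module is not a composition factor of $H^{d-j}_{\P^j_K}(\P^d_K,\Omega^{s-1}/D^{s-1})$. Second, I would invoke the description in \cite{O} of $H^{d-j}_{\P^j_K}(\P^d_K,\Omega^{s-1})$ as a $\bf P_{(j+1,d-j)}\ltimes U(\frg)$-module in terms of generalized Verma modules for $\frg=\frgl_{d+1}$: the composition factors are parametrized by certain Weyl group cosets, and the trivial representation occurs, if at all, with multiplicity one, coming precisely from the subspace $H^{d-j}_{\P^j_K}(\P^d_K,\Omega^{s-1})$ that does arise from global $p$-forms with constant coefficients. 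Third, I would check that this multiplicity-one copy of the trivial representation lies inside the subobject $H^{d-j}_{\P^j_K}(\P^d_K,D^{s-1})$ — equivalently that the corresponding class is a $d$-closed form — using the identity (\ref{same_coh}) which says $D^{\dag,s-1}$ and $\Omega^{s-1}$ have the same cohomology on $\P^d_K$, so the one-dimensional pieces that appear in the long exact local-cohomology sequence match up and no new trivial constituent is introduced in the quotient. Finally, transporting back along the dense inclusion and the (strict) exactness from Proposition \ref{letzte_prop_1}, and tensoring with $\bbC_p(-s)$ (which is $1$-dimensional over $\bbC_p$ and does not affect the $U(\frg)$-action), yields the claim.

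\textbf{Main obstacle.} The delicate point is Step 3: controlling \emph{where} the trivial $U(\frg)$-constituent sits relative to the sub/quotient decomposition $D^{\dag,s-1}\subset \Omega^{\dag,s-1}$. A priori the long exact sequence only tells us that the trivial representation either stays in $H^{d-j}_{\P^j_K(\epsilon_n)}(\P^d_K,D^{\dag,s-1})$ or moves to the quotient; ruling out the second possibility requires knowing that the relevant one-dimensional cohomology classes on $\P^d_K$ (which by (\ref{same_coh}) and Lemma \ref{vanishing_OmodD} are the same for $D^{\dag,s-1}$ and $\Omega^{\dag,s-1}$, and vanish for $\Omega^{\dag,s-1}/D^{\dag,s-1}$) force the connecting maps in the local-cohomology sequences to be compatible. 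Concretely one has to verify that the map $H^{d-j}_{\P^j_K(\epsilon_n)}(\P^d_K,D^{\dag,s-1})\to H^{d-j}_{\P^j_K(\epsilon_n)}(\P^d_K,\Omega^{\dag,s-1})$ is an isomorphism onto the $U(\frg)$-submodule generated by the trivial constituent; this is plausible from the computations in \cite{O} but needs the explicit generalized-Verma-module combinatorics there, and I expect that to be the technical heart of the argument.
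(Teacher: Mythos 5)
Your first reduction is the same as the paper's: via Proposition \ref{letzte_prop_1}, $F_2^{d-j-1,s}$ is a quotient of $H^{d-j}_{\P^j_K(\epsilon_n)}(\P^d_K,\Omega^{\dag,s-1})=H^{d-j}_{\P^j_K(\epsilon_n)}(\P^d_K,\Omega^{s-1})$. But from there you set up a multiplicity-tracking argument (the trivial constituent of the ambient module, if it exists, must be shown to lie in the subobject $H^{d-j}_{\P^j_K(\epsilon_n)}(\P^d_K,D^{\dag,s-1})$), and you yourself flag that this localization step --- your Step 3 --- is only ``plausible from the computations in \cite{O}'' and is ``the technical heart.'' That is precisely the gap: the step that would prove the lemma is not carried out, and nothing in (\ref{same_coh}) or Lemma \ref{vanishing_OmodD} by itself pins down on which side of the extension a trivial constituent would sit. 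There is a second, quieter gap in your reduction to algebraic local cohomology: passing composition-factor information through a dense $U(\frg)$-stable subspace of a Fr\'echet module (Remark \ref{rmk_17}, Remark \ref{remark_cato}) is not automatic, so even granting the algebraic statement you would still owe an argument that it controls the analytic modules in question.

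The paper avoids all of this with a weight argument that makes the bookkeeping unnecessary. By \cite[Prop.~1.4.2, Cor.~1.4.9]{O}, the ambient module $H^{d-j}_{\P^j_K(\epsilon_n)}(\P^d_K,\Omega^{s-1})$ is a quotient of a (Fr\'echet) completion of an explicit $\frt$-weight module: Laurent monomials $X_0^{k_0}\cdots X_d^{k_d}$ with $k_0,\dots,k_j\geq 0$, $k_{j+1},\dots,k_d\leq 0$, $\sum k_i=0$, tensored with the irreducible $L_{(j+1,d-j)}$-representation $V_{d-j,z_{d-j}\lambda}$ attached to the weight $\lambda=(-s+1,1,\dots,1,0,\dots,0)$ of $\Omega^{s-1}$. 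An explicit computation of the weights of $V_{d-j,z_{d-j}\lambda}$ (concatenations of permutations of the two arrays written out in the paper) shows that the total weight $(0,\dots,0)$ is never realizable. Hence the trivial representation is not a composition factor of the ambient module at all, and a fortiori not of its quotient $F_2^{d-j-1,s}$; there is no trivial constituent to locate, no multiplicity comparison, and no need to descend to the algebraic local cohomology or to the generalized-Verma combinatorics you invoke. If you want to complete your route you would have to supply exactly the constituent-tracking you postponed; the shorter path is to replace Steps 2--4 by the zero-weight-absence computation.
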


\begin{proof}
We consider the weights of $F_2^{d-j-1,s}$ with respect to the Cartan algebra $\frt \subset \frg.$ 
By Prop. \ref{letzte_prop_1} the representation 
$F_2^{d-j-1,s}=H^{d-j}_{\P^j_K(\epsilon_n)}(\P^d_K,\Omega^{\dag,s-1}/D^{\dag,s-1})$ is a 
homomorphic image of $H^{d-j}_{\P^j_K(\epsilon_n)}(\P^d_K,\Omega^{\dag,s-1})=H^{d-j}_{\P^j_K(\epsilon_n)}(\P^d_K,\Omega^{s-1})$
which in turn is a quotient of a (Fr\'echet)-completion 
of some representation 
of the shape 
\begin{equation}\label{representation}
 \bigoplus_{k_0,\ldots, k_{j} \geq 0  \atop {k_{j+1},\ldots, k_d \leq 0  \atop {k_0+\cdots + k_d=0}}} 
K\cdot X_0^{k_0}X_1^{k_1}\cdots X_d^{k_d} \otimes  V_{d-j,z_{d-j}\lambda}
\end{equation}
cf. \cite[Prop. 1.4.2, Cor. 1.4.9]{O} for some
irreducible algebraic representation $V=V_{d-j,z_{d-j}\lambda}$ of the Levi subgroup $L_{(j+1,d-j)}$ of $P_{(j+1,d-j)}.$
Here $\lambda$ is the weight defining $\Omega^{s-1}$ in the sense of loc.cit.
It is given by the tuple $(-s+1,1,\cdots,1,0,\ldots,0)\in \Z^{d+1}$ via the identification $X^\ast(T)=\Z^{d+1}$.\footnote{Moreover $i_0=s-1$ in the notation of loc.cit. as $H^k(\P^d_K,\Omega^k)=K$ for all $k\geq 0.$}
Therefore it suffices to check that the latter representation does not contain the trivial representation as composition factor. 
Going to the definition of $V$ in loc.cit., it turns out that that the weights of this representation are given by all 
concatenations of all permutations of the individual arrays $(0,\ldots,0,-s+d-j)$  (of length $j+1$) and 
$(1,\ldots,1,0\ldots,0)$ (of length $d-j$)  for $d-j\leq s-1$ resp. $(0,\ldots,0,-1,\ldots,-1)$  (of length $j+1$) and 
$(d-j+1-s,0,\ldots,0)$ (of length $d-j$) for $d-j> s-1.$
Since the weight of a polynomial $f=X_0^{k_0}X_1^{k_1}\cdots X_d^{k_d}$ is given by $(k_0,\ldots,k_d)$ we see 
that the trivial weight $(0,\ldots,0)$ is not realizable in the representation (\ref{representation}).
\end{proof}

In particular, we see that $E_2=E_\infty$ and that 
$H^{i}(\P^d_{\bbC_p}\setminus \P^j_{\bbC_p}(\epsilon_n),\bbQ_p)$ is an extension
$$0 \to A \to H^{i}(\P^d_{\bbC_p}\setminus \P^j_{\bbC_p}(\epsilon_n),\bbQ_p)\to  B
\to 0 $$
where $$A=H^{d-j-1}(\P^d_K\setminus \P^j_K(\epsilon_n),\Omega^{\dag,s-1}/D^{\dag,s-1})\hat{\otimes}_K \bbC_p(-s)$$
with $s=i-(d-j-1) \geq 1$ and
$$B= H^{i}_{dR}(\P^d_K\setminus \P^j_K(\epsilon_n),\bbQ_p)(-\frac{i}{2}).$$ In particular the term $A$ vanishes for 
$i < d-j$. Further the term $B$  vanishes for $i\geq 2(d-j).$

Thus we have proved by applying the long exact sequence (\ref{long_exact}) the following statement.
\begin{Proposition}\label{inter_result} Let $j\geq 0.$ 

a) If $i< d-j$, then $H^{i}_{\P^j_{\bbC_p}(\epsilon_n)}(\P^d_{\bbC_p},\bbQ_p)= 0$.
 

b)  If $i\geq d-j$, then $H^{i}_{\P^j_{\bbC_p}(\epsilon_n)}(\P^d_{\bbC_p},\bbQ_p)$ is an extension 
$$ 0 \to C \to  H^{i}_{\P_{\bbC_p}^j(\epsilon_n)}(\P^d_{K},\bbQ_p) \to D \to 0$$
where
$$C=H^{d-j}_{\P_{K}^j(\epsilon_n)}(\P^d_{K},\Omega^{\dag,i-(d-j)-1}/D^{\dag,(i-(d-j)-1})  \hat{\otimes}_K \bbC_p (-(i-(d-j))$$ and 
$$D=H^i_{dR,\bbP^j_K(\epsilon_n)}(\P^d_K,\bbQ_p) (-\frac{i}{2})$$
denotes the "local de Rham cohomology" of $\P^d_K$ with support in $\bbP^j_K(\epsilon_n)$.
\end{Proposition}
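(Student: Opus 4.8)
The plan is to deduce Proposition \ref{inter_result} directly from the long exact sequence (\ref{long_exact}) relating the local cohomology $H^\ast_{\P^j_{\bbC_p}(\epsilon_n)}(\P^d_{\bbC_p},\bbQ_p)$, the global cohomology $H^\ast(\P^d_{\bbC_p},\bbQ_p)$, and the cohomology of the complement $H^\ast(\P^d_{\bbC_p}\setminus\P^j_{\bbC_p}(\epsilon_n),\bbQ_p)$, together with the two-line $E_2=E_\infty$ spectral sequence already established above (from Proposition \ref{prop_proetale_punctured_balls}, Lemma \ref{coh_vanishing}, and Lemma \ref{not_include_trivial}). First I would record the input: the geometric pro-\'etale cohomology of projective space $H^i(\P^d_{\bbC_p},\bbQ_p)$ is $\bbQ_p(-\tfrac{i}{2})$ for $i$ even, $0\leq i\leq 2d$, and $0$ otherwise. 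Next, from the spectral sequence discussion I would extract the already-proven statement that $H^i(\P^d_{\bbC_p}\setminus\P^j_{\bbC_p}(\epsilon_n),\bbQ_p)$ sits in a short exact sequence $0\to A\to H^i(\cdots)\to B\to 0$ with $A=H^{d-j-1}(\P^d_K\setminus\P^j_K(\epsilon_n),\Omega^{\dag,s-1}/D^{\dag,s-1})\hat\otimes_K\bbC_p(-s)$, $s=i-(d-j-1)$, and $B=H^i_{dR}(\P^d_K\setminus\P^j_K(\epsilon_n),\bbQ_p)(-\tfrac{i}{2})$; in particular $A=0$ for $i<d-j$ and $B=0$ for $i\geq 2(d-j)$.

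For part a), in the range $i<d-j$ I would observe that $A=0$ forces $H^i(\P^d_{\bbC_p}\setminus\P^j_{\bbC_p}(\epsilon_n),\bbQ_p)=B=H^i_{dR}(\P^d_K\setminus\P^j_K(\epsilon_n),\bbQ_p)(-\tfrac{i}{2})$, and likewise $H^{i-1}$ of the complement is a $\bbQ_p$-form of the de Rham cohomology of the complement. Plugging into (\ref{long_exact}), the map $H^i(\P^d_{\bbC_p},\bbQ_p)\to H^i(\P^d_{\bbC_p}\setminus\P^j_{\bbC_p}(\epsilon_n),\bbQ_p)$ is, up to a $\bbQ_p$-structure, the restriction map in de Rham cohomology $H^i_{dR}(\P^d_K)\to H^i_{dR}(\P^d_K\setminus\P^j_K(\epsilon_n))$, which is injective for $i<d-j$ (this is the known vanishing $H^i_{\P^j_K(\epsilon_n)}(\P^d_K,\Omega^s)=0$ for $i<d-j$ from \cite{O}, equivalently the splitting of the de Rham restriction in low degrees); hence the connecting map $H^{i-1}(\P^d_{\bbC_p}\setminus\P^j_{\bbC_p}(\epsilon_n),\bbQ_p)\to H^i_{\P^j_{\bbC_p}(\epsilon_n)}(\P^d_{\bbC_p},\bbQ_p)$ is surjective while $H^{i-1}$ of the complement and $H^{i-1}$ of $\P^d_{\bbC_p}$ agree, giving $H^i_{\P^j_{\bbC_p}(\epsilon_n)}(\P^d_{\bbC_p},\bbQ_p)=0$. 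Here I would also invoke Lemma \ref{not_include_trivial}: it guarantees that the $A$-part never interferes with the $\bbQ_p$-type contributions coming from $H^\ast(\P^d_{\bbC_p},\bbQ_p)$, so the long exact sequence splits into the de Rham part and the $\Omega^{\dag}/D^{\dag}$ part independently.

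For part b), in the range $i\geq d-j$ I would split the analysis along the same two parts. The $\Omega^{\dag}/D^{\dag}$-contribution to $H^{i-1}(\P^d_{\bbC_p}\setminus\P^j_{\bbC_p}(\epsilon_n),\bbQ_p)$, namely $A$ in degree $i-1$, equals $H^{d-j-1}(\P^d_K\setminus\P^j_K(\epsilon_n),\Omega^{\dag,s-1}/D^{\dag,s-1})\hat\otimes_K\bbC_p(-s)$ with $s=i-(d-j)$, and by the isomorphism $H^{d-j-1}(\P^d_K\setminus\P^j_K(\epsilon_n),\Omega^{\dag,s-1}/D^{\dag,s-1})\cong H^{d-j}_{\P^j_K(\epsilon_n)}(\P^d_K,\Omega^{\dag,s-1}/D^{\dag,s-1})$ established just before the $E_2$-picture (using the vanishing of $H^i(\P^d_K,\Omega^{\dag,s-1}/D^{\dag,s-1})$ from Lemma \ref{vanishing_OmodD}), this maps isomorphically onto a subspace $C$ of $H^i_{\P^j_{\bbC_p}(\epsilon_n)}(\P^d_{\bbC_p},\bbQ_p)$ via the connecting homomorphism; by Lemma \ref{not_include_trivial} this piece of the connecting map is injective since $H^i(\P^d_{\bbC_p},\bbQ_p)$ is a Tate twist of $\bbQ_p$ and shares no composition factor with $C$. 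The quotient $D$ is then governed by the de Rham part: it is the cokernel of $H^{i-1}_{dR}(\P^d_K)\to H^{i-1}_{dR}(\P^d_K\setminus\P^j_K(\epsilon_n))$ together with the kernel of $H^i_{dR}(\P^d_K)\to H^i_{dR}(\P^d_K\setminus\P^j_K(\epsilon_n))$, which by definition assembles into the local de Rham cohomology $H^i_{dR,\P^j_K(\epsilon_n)}(\P^d_K,\bbQ_p)$, Tate-twisted by $(-\tfrac{i}{2})$; for $i<2(d-j)$ the subspace $B$ in degree $i-1$ vanishes, simplifying the identification. I expect the only real subtlety to be the bookkeeping that the connecting maps respect the two-step filtration coming from the $F$- and $G$-parts of the double complex — i.e.\ that no cross-terms mix the $\Omega^\dag/D^\dag$-piece with the de Rham piece — but this is exactly what Lemma \ref{not_include_trivial} is designed to rule out, since the two pieces have disjoint sets of $\frt$-weights, so the argument is clean once that observation is in hand.
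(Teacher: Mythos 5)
Your proposal is correct and follows essentially the same route as the paper: the paper also deduces the Proposition directly from the long exact sequence (\ref{long_exact}), the known cohomology of $\P^d_{\bbC_p}$, and the two-line $E_2=E_\infty$ description of $H^\ast(\P^d_{\bbC_p}\setminus\P^j_{\bbC_p}(\epsilon_n),\bbQ_p)$ as an extension of the de Rham $\bbQ_p$-form by the $\Omega^{\dag}/D^{\dag}$-part, with Lemma \ref{not_include_trivial} ruling out interference between the two pieces. You merely spell out details (the identification of $C$ via the connecting map and the isomorphism $H^{d-j-1}(\P^d_K\setminus\P^j_K(\epsilon_n),\Omega^{\dag,s-1}/D^{\dag,s-1})\cong H^{d-j}_{\P^j_K(\epsilon_n)}(\P^d_K,\Omega^{\dag,s-1}/D^{\dag,s-1})$, and the assembly of $D$ from the kernel and cokernel of the de Rham restriction) that the paper leaves implicit in its one-line derivation.
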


\medskip
\section{The proof of the main theorem}
Let $\cY_{\bbC_p}$ be the set-theoretical complement of $\cX_{\bbC_p}$ in $\bbP^d_{\bbC_p}.$
Consider the  topological exact sequence
of locally convex $\bbQ_p$-vector spaces with continuous $G$-action
\begin{eqnarray}\label{long_exact_2}
\nonumber \ldots \rightarrow & H^i_{\cY_{\bbC_p}}(\P^d_{\bbC_p},\bbQ_p) & \rightarrow H^i(\P^d_{\bbC_p},\bbQ_p) \rightarrow H^i(\cX_{\bbC_p},\bbQ_p)\\ 
\rightarrow & H^{i+1}_{\cY_{\bbC_p}}(\P^d_{\bbC_p},\bbQ_p) & \rightarrow \ldots 
\end{eqnarray}

The pro-\'etale cohomology groups of $\bbP^d_{\bbC_p}$ look like as in the classical setting by the quasi- compactness of projective space, i.e.
$H^\ast(\P^d_{\bbC_p},\bbQ_p)=\bigoplus_{i=0}^d \bbQ_p [-2i].$ 
Hence it suffices to understand 
the objects $H^i_{\cY_{\bbC_p}}(\P^d_{\bbC_p},\bbQ_p)$ and the maps $H^i_{\cY_{\bbC_p}}(\P^d_{\bbC_p},\bbQ_p)  \rightarrow H^i(\P^d_{\bbC_p},\bbQ_p) $. For this we   recall the construction \cite{O} of an acyclic resolution of the 
constant sheaf $\Z$ on $\cY_{\bbC_p}$ considered as an object in the category of pseudo-adic spaces \cite{H}.

Let $L$ be again one of our fields $K$ or $\bbC_p$. Set
$${\cY}^{ad}_L:=(\P^d_L)^{ad}\setminus \cX^{ad}_L.$$
This is a closed pseudo-adic subspace of $(\P^d_L)^{ad}.$
For any subset $I\subset \Delta$ with  $\Delta\setminus I=\{\alpha_{i_1} < \ldots < \alpha_{i_r}\},$ set 
$$j(I):=i_1 \mbox { and } Y_{I,L}=\P^{j(I)}_L.$$
Furthermore, let $P_I\subset G$ be the standard parabolic subgroup attached to $I$. Hence the group $P_I$ stabilizes $Y_{I,L}.$
We obtain
\begin{eqnarray}\label{Yad=}
{\cY}_L^{ad}=\bigcup_{I \subset \Delta} \bigcup_{g\in G/P_I}g\cdot Y_{I.L}^{ad} = \bigcup_{g\in G}g\cdot Y_{\Delta \setminus \{\alpha_{d-1}\},L}^{ad}.
\end{eqnarray}

\noindent For any compact open subset $ W\subset G/P_I,$ we set
$$Z_{I,L}^W:= \bigcup_{g\in W} gY_{I,L}^{ad}.$$
Thus
$${\cY}_L^{ad}=\bigcup_{I\subset \Delta \atop |\Delta\setminus I|=1}Z_I^{G/P_I}=Z_{\Delta\setminus \{\alpha_{d-1}\},L}^{G/P_{\Delta\setminus{\{\alpha_{d-1} \}}}}.$$
We consider the natural closed embeddings of pseudo-adic spaces
$$\Phi_{g,I} :gY_{I,L}^{ad} \longrightarrow {\cY}_L^{ad} \mbox { and } \Psi_{I,W} :Z^W_{I,L} \longrightarrow {\cY}_L^{ad}.$$
Put
$$ \Z_{g,I}:=(\Phi_{g,I})_\ast(\Phi_{g,I}^\ast\, \Z) \mbox{ and } \Z_{Z_{I,L}^W}:=(\Psi_{I,W})_\ast(\Psi_{I,W}^\ast\, \Z)$$
Let
${\mathcal C}_I$ be the category of compact open disjoint coverings of $G/P_I$
where the morphisms are given by the refinement-order.
For a covering $c =(W_j)_{j} \in {\mathcal C}_I,$ we denote by $\Z_c$ the sheaf  on $\cY^{ad}_L$ defined by
the image of the natural morphism of sheaves
$$\bigoplus_{W_j \in {\mathcal C}_I}\; \Z_{Z_{I,L}^{W_j}} \hookrightarrow \prod_{g\in G/P_I} \Z_{g,I}.$$
We put
\begin{eqnarray}\label{prod'=limInd}
\sideset{}{'}\prod_{g\in G/P_I} \Z_{g,I} = \varinjlim_{c\in {\mathcal C}_I} \Z_c,
\end{eqnarray}
We obtain the following  complex $C^\bullet_L$ of sheaves on ${\cY}_L^{ad},$

\begin{eqnarray}
\nonumber 0 \rightarrow \Z \rightarrow\!\!\! \bigoplus_{I \subset
\Delta \atop |\Delta\setminus I|=1} \sideset{}{'}\prod_{g\in G/P_I} \Z_{g,I}
\rightarrow \!\!\!\bigoplus_{I \subset \Delta \atop |\Delta\setminus I|=2}
\sideset{}{'}\prod_{g\in G/P_I} \Z_{g,I} \rightarrow \cdots \rightarrow \!\!\!\bigoplus_{I \subset \Delta \atop |\Delta\setminus I|=i}
\sideset{}{'}\prod_{g\in G/P_I} \Z_{g,I} \rightarrow \cdots \\ \label{complex} \\ \nonumber \cdots\rightarrow
\!\!\!\bigoplus_{I \subset \Delta \atop |\Delta\setminus I|=d-1} \sideset{}{'}\prod_{g\in G/P_I} \Z_{g,I}
\rightarrow \sideset{}{'}\prod_{g\in G/P_\emptyset} \Z_{g,\emptyset} \rightarrow 0.
\end{eqnarray}

Concerning the next statement see \cite[Thm 2.1.1]{O}.

\begin{Theorem}\label{Theorem1}
The complex $C^\bullet_L$  is acyclic. \qed
\end{Theorem}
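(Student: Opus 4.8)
The plan is to verify that the complex $C^\bullet_L$ is exact by checking exactness on stalks: a complex of sheaves on (the underlying topological space / site of) the pseudo-adic space $\cY^{ad}_L$ is exact if and only if the induced complex of stalks at every point is exact. So I fix a point $x\in\cY^{ad}_L$ and analyze the resulting complex of abelian groups.

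First I would unwind the restricted products. Since $\Phi_{g,I}$ and $\Psi_{I,W}$ are closed immersions, $(\Z_{g,I})_x$ equals $\Z$ when $x\in gY^{ad}_{I,L}$ and $0$ otherwise, and likewise for $\Z_{Z^W_{I,L}}$. Because the stalk functor at $x$ commutes with the filtered colimit (\ref{prod'=limInd}) and with finite direct sums, and because any compact open subset of $G/P_I$ can be refined by a disjoint compact open covering separating any prescribed finite collection of cosets, the stalk of $\sideset{}{'}\prod_{g\in G/P_I}\Z_{g,I}$ at $x$ is the \emph{direct} sum $\bigoplus_{gP_I\in S_I(x)}\Z$, where $S_I(x):=\{\,gP_I : x\in gY^{ad}_{I,L}\,\}$. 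This is precisely the role of the restricted product, rather than the naive product: it yields a direct sum even at points $x$ lying on infinitely many rational linear subvarieties, so that the resulting complex is a filtered colimit of finite-rank combinatorial complexes. Thus the stalk of $C^\bullet_L$ at $x$ is the augmented complex
$$0\to\Z\to\bigoplus_{|\Delta\setminus I|=1}\bigoplus_{S_I(x)}\Z\to\bigoplus_{|\Delta\setminus I|=2}\bigoplus_{S_I(x)}\Z\to\cdots\to\bigoplus_{|\Delta\setminus I|=d-1}\bigoplus_{S_I(x)}\Z\to\bigoplus_{S_\emptyset(x)}\Z\to 0.$$

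Next I would reinterpret this stalk complex as the augmented chain complex of a combinatorial complex. A coset $gP_I\in S_I(x)$ records a rational linear subvariety $gY_{I,L}$ of $\P^d$ passing adically through $x$ together with the datum $I\subset\Delta$; as $(g,I)$ varies, these assemble into a (possibly infinite) simplicial-type complex $\mathcal{P}(x)$ whose faces encode the incidence combinatorics of the rational linear subspaces through $x$. The geometric crux is that among all rational linear subvarieties $Z$ of $\P^d$ with $x\in Z^{ad}$ there is a smallest one $Z_x$, which is again of the form $gY_{I,L}$; for $x$ classical, $Z_x=\P(W_x)$ where $W_x$ is the smallest $K$-subspace of $K^{d+1}$ whose base change contains the line corresponding to $x$, and an analogous statement persists at the remaining points of the pseudo-adic space. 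This distinguished subspace serves as a cone point of $\mathcal{P}(x)$.

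Finally, exactness follows from contractibility: adjoining $Z_x$ to a face defines the standard cone homotopy on the augmented chain complex of $\mathcal{P}(x)$; it is well defined because the restricted products produce direct sums, so every chain has finite support, and it witnesses $dh+hd=\mathrm{id}$ away from the augmentation. Hence the stalk complex at $x$ is exact, and as $x$ was arbitrary, $C^\bullet_L$ is acyclic. I expect the main obstacle to be the stalk computation of the restricted products at the non-classical points of $\cY^{ad}_L$: one must pin down exactly which rational linear subvarieties pass adically through such a point and verify that a smallest one still exists, so that the cone homotopy applies. This delicate point-set analysis of $\cY^{ad}_L$ is the technical heart of the argument, and is carried out in detail in \cite{O}.
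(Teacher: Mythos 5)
Your overall strategy is the right one, and it is the one behind the reference the paper cites for this theorem ([O, Thm.\ 2.1.1]): reduce to stalks (Huber's (pseudo-)adic spaces have enough points, so this reduction is legitimate), observe that the $K$-rational linear subvarieties whose adic closures contain a given point $x$ are stable under intersection, hence contain a smallest one, and use this minimal element as a cone point to contract the resulting combinatorial complex. The existence of the minimal subvariety is correct and is indeed the geometric heart of the matter.

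However, there is a genuine gap in your computation of the stalks, and your argument relies on it. The stalk of $\sideset{}{'}\prod_{g\in G/P_I}\Z_{g,I}$ at $x$ is \emph{not} the direct sum $\bigoplus_{gP_I\in S_I(x)}\Z$. By definition (\ref{prod'=limInd}) it is $\varinjlim_{c\in\cC_I}\bigoplus_{W_j\in c,\; x\in Z^{W_j}_{I,L}}\Z$, and the transition map for a refinement sends the generator attached to $W_j$ to the \emph{sum} of the generators attached to the finer pieces meeting $S_I(x)$; the colimit is therefore the group $C^{lc}(S_I(x),\Z)$ of locally constant $\Z$-valued functions on the compact set $S_I(x)\subset G/P_I$, not the finitely supported functions. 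The two differ exactly in the problematic cases: e.g.\ if $x$ is a $K$-rational point of $\P^d_L$ (such points lie in $\cY^{ad}_L$), then for $I$ with $Y_{I,L}$ a hyperplane the set $S_I(x)$ is an infinite perfect compact set, singletons are not open in it, and the indicator of a single coset does not even exist in the stalk. Your identification also makes the augmentation ill-defined: the map $\Z\to\sideset{}{'}\prod_g\Z_{g,I}$ sends $1$ to the constant function $1$ on $S_I(x)$, which is not finitely supported, so the displayed stalk complex is internally inconsistent whenever some $S_I(x)$ is infinite. Consequently the step ``every chain has finite support, so the cone homotopy $dh+hd=\mathrm{id}$ is well defined'' collapses. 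The contraction must instead be carried out on the complexes of locally constant functions --- for instance by writing the stalk complex as a filtered colimit over compatible finite disjoint compact open coverings (the coverings by $gP_I^n$-cosets) of complexes attached to finite incidence data and exhibiting a cone point at each finite level, or by defining the homotopy through the retractions onto the coset of the minimal subvariety. This is precisely the extra work done in \cite{O}; note also that if your stalk formula were correct, the restricted product could be replaced by the naive direct sum $\bigoplus_{g}\Z_{g,I}$, which is false already because $\Z\to\prod_g\Z_{g,I}$ does not factor through it, and which would destroy the identification of the $E_1$-terms with the induced representations ${\rm Ind}^{\infty,G}_{P_I}$ used later in the paper.
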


We consider the morphism of topoi $\nu: (\widetilde{\P_{\bbC_p}^d)^{ad}_{\mbox{\tiny pro\'et}}} \to \widetilde{(\P_{\bbC_p}^d)^{ad}_{\mbox{\tiny \'et}}}$. By pulling back the
complex $i_\ast(C_{\bbC_p}^\bullet)$ to  $(\P_{\bbC_p}^d)^{ad}_{\mbox{\tiny pro\'et}}$ where 
$i:\cY_{\bbC_p}^{ad} \hookrightarrow (\P^d_{\bbC_p})^{ad}$ is the inclusion, we get a resolution of the pro-\'etale sheaf 
$i_\ast(\bbZ_{\cY_{\bbC_p}^{ad}})$ on $(\P_{\bbC_p}^d)^{ad}$ since  $\nu^\ast$ is an exact functor.
We denote this complex for simplicity by the same symbols. In fact, we could have defined this complex directly on
the pro-\'etale site  as the  sheaves $\bbZ$ are constant.
In this section we evaluate the spectral sequence which is induced
by the complex (\ref{complex}) applied to
${\rm Ext}_{\widetilde{(\P_{\bbC_p}^d)^{ad}_{\mbox{\tiny pro\'et}}}}^\ast(i_\ast(-),\bbQ_p).$ In the following we also simply write
${\rm Ext}^i(\cdot,\cdot)$ for the $i^{th}$ Ext group in the category ${\widetilde{(\P_{\bbC_p}^d)^{ad}_{\mbox{\tiny pro\'et}}}}.$

As usual there is the identification
$${\rm Ext}^\ast(i_\ast(\Z_{{\cY}_{\bbC_p}^{ad}}),\bbQ_p) =
H^\ast_{{\cY}^{ad}_{\bbC_p}}(\P^d_{\bbC_p},\bbQ_p).$$  Further, we have $H^\ast_{{\cY}^{ad}_{\bbC_p}}(\P^d_{\bbC_p},\bbQ_p) =
H^\ast_{{\cY_{\bbC_p}}}(\P^d_{\bbC_p},\bbQ_p)$ by the very definition of pro-\'etale cohomology.

\begin{Proposition}\label{Ext=projlim} For all subsets  $I\subset \Delta,$ there is an isomorphism
$${\rm Ext}^\ast(i_\ast(\sideset{}{'}\prod\limits_{g\in G/P_I} \Z_{g,I}), \bbQ_p)
= \varprojlim_{n\in \N}\bigoplus_{g\in G_0/P_I^n} H_{gY_{I,\bbC_p}(\epsilon_n)}^{\ast}(\P^d_{\bbC_p}, \bbQ_p).$$
\end{Proposition}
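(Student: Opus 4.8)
The plan is to compute the Ext-groups by dévissage of the ``restricted product'' sheaf $\sideset{}{'}\prod_{g\in G/P_I}\Z_{g,I}$, reducing first to a direct limit over compact open disjoint coverings of $G/P_I$, then to individual finite sums of the skyscraper-type sheaves $\Z_{Z^W_{I,\bbC_p}}$, and finally identifying the contribution of a single $Z^W_{I,\bbC_p}$ with local pro-étale cohomology supported on an $\epsilon_n$-tube $gY_{I,\bbC_p}(\epsilon_n)$. Concretely, recall from \eqref{prod'=limInd} that $\sideset{}{'}\prod_{g\in G/P_I}\Z_{g,I}=\varinjlim_{c\in\cC_I}\Z_c$; since the $\cC_I$ form a filtered system and $\Ext^\ast$ in the pro-étale topos commutes with filtered colimits of sheaves in the first argument when the target $\bbQ_p$ is a suitable ``small'' object — or, better, since one can instead pass to the limit on the geometric side and control the transition maps — one reduces to computing $\Ext^\ast(i_\ast\Z_c,\bbQ_p)$ for each covering $c=(W_j)_j$ and then taking the colimit over refinements.

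Next I would observe that for a fixed disjoint covering $c=(W_j)_j$ of $G/P_I$ one has $\Z_c=\bigoplus_j \Z_{Z^{W_j}_{I,\bbC_p}}$ as a sheaf on $\cY^{ad}_{\bbC_p}$ (disjointness makes the map into $\prod_g\Z_{g,I}$ injective with this image), so $\Ext^\ast(i_\ast\Z_c,\bbQ_p)=\bigoplus_j \Ext^\ast(i_\ast\Z_{Z^{W_j}_{I,\bbC_p}},\bbQ_p)=\bigoplus_j H^\ast_{Z^{W_j}_{I,\bbC_p}}(\P^d_{\bbC_p},\bbQ_p)$, using the standard identification of Ext against $\bbQ_p$ with local cohomology with support in the closed pseudo-adic subspace $Z^{W_j}_{I,\bbC_p}$. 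The key geometric input is then that $Z^W_{I,\bbC_p}=\bigcup_{g\in W}gY^{ad}_{I,\bbC_p}$ for $W$ compact open can be exhausted by the finite disjoint unions $\bigsqcup_{g\in G_0/P_I^n, gP_I\in W} gY_{I,\bbC_p}(\epsilon_n)$ of $\epsilon_n$-tubes, because the subgroups $P_I^n$ shrink to $P_I$ and the tubes $Y_{I,\bbC_p}(\epsilon_n)$ shrink to $Y^{ad}_{I,\bbC_p}$; local cohomology with support in a closed subspace is the limit of local cohomology with support in a cofinal system of neighborhoods, which here gives $H^\ast_{Z^W_{I,\bbC_p}}(\P^d_{\bbC_p},\bbQ_p)=\varprojlim_n\bigoplus_{g} H^\ast_{gY_{I,\bbC_p}(\epsilon_n)}(\P^d_{\bbC_p},\bbQ_p)$ over those $g$ with $gP_I\in W$. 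Assembling over $j$, over colimits of refinements, and noting that refining $c$ corresponds exactly to enlarging $n$ and that $G/P_I=\varinjlim_n G_0/P_I^n$ with the disjoint coverings by $P_I^n$-cosets cofinal in $\cC_I$, the colimit over $c$ and the limit over $n$ interlock to yield precisely $\varprojlim_n\bigoplus_{g\in G_0/P_I^n}H^\ast_{gY_{I,\bbC_p}(\epsilon_n)}(\P^d_{\bbC_p},\bbQ_p)$.

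I expect the main obstacle to be bookkeeping with the two nested limits and making sure $\Ext^\ast(i_\ast(-),\bbQ_p)$ genuinely commutes with the relevant filtered colimit $\varinjlim_{c\in\cC_I}\Z_c$ in the pro-étale topos — a site which is not coherent in the naive sense, so one cannot blindly quote that cohomology commutes with filtered colimits of sheaves. The safe route is to avoid that issue entirely by identifying, for each fixed $n$, the sheaf attached to the covering of $G/P_I$ by $P_I^n$-cosets with the ``thickened'' version built from the tubes $Y_{I,\bbC_p}(\epsilon_n)$ (these are quasi-compact open, hence manageable), computing $\Ext^\ast$ against $\bbQ_p$ there as a genuine finite direct sum of local cohomologies of quasi-compact tubes, and only then passing to $\varprojlim_n$; the colimit over all coverings in $\cC_I$ is then absorbed because the $P_I^n$-coset coverings are cofinal and the $\varprojlim_n$ already records all of them. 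One should also check that the higher $\varprojlim^1$-terms do not contribute, which follows from the Mittag-Leffler property of the transition maps between the tube cohomologies — as already used in Remark \ref{rmk_17} and Remark \ref{remark_cato} — so the displayed isomorphism holds on the nose with no correction terms.
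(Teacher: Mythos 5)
Your overall skeleton --- cofinal $P_I^n$-coset coverings, the decomposition $\Z_c\cong\bigoplus_j\Z_{Z^{W_j}_{I,\bbC_p}}$, the identification ${\rm Ext}^\ast(i_\ast\Z_{Z^{W}_{I,\bbC_p}},\bbQ_p)=H^\ast_{Z^{W}_{I,\bbC_p}}(\P^d_{\bbC_p},\bbQ_p)$, and a Mittag-Leffler argument to kill $\varprojlim^{(1)}$-terms --- is close to the paper's. But the step that carries the passage from the supports $Z^{W}_{I,\bbC_p}$ (unions of $K$-rational translates of $Y^{ad}_{I,\bbC_p}$) to the tubes $gY_{I,\bbC_p}(\epsilon_n)$ has a genuine gap. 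First, the tubes indexed by distinct cosets $g\in G_0/P_I^n$ are in general not disjoint: for $j(I)\geq 1$ two distinct translates of $\P^{j(I)}$ in $\P^d$ can meet (two hyperplanes always do), hence so do their tubes, so cohomology with support in their union is not the direct sum of the individual tube cohomologies; in the Proposition the direct sum comes from the disjointness of the cosets in $G/P_I$ at the level of the sheaves $\Z_{Z^{gP_I^n}_{I,\bbC_p}}$, not from a disjoint decomposition of supports in $\P^d_{\bbC_p}$. Second, the principle you invoke --- that local cohomology with support in a closed subspace is the limit of local cohomologies with support in a shrinking system of neighborhoods --- is not a general fact: the complement of the support is an increasing union of opens, so at best one gets a Milnor-type sequence with a $\varprojlim^{(1)}$ correction, and controlling that is the actual content. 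Moreover the two systems are not mutually cofinal ($Z^{P_I^n}_{I,\bbC_p}\subset Y_{I,\bbC_p}(\epsilon_n)$, but a tube is never contained in a union of translates), so no cofinality argument applies. The paper never asserts the identity for a fixed $W$; it only compares the two projective limits $\varprojlim_n H^\ast_{Z^{P_I^n}_{I,\bbC_p}}$ and $\varprojlim_n H^\ast_{Y_{I,\bbC_p}(\epsilon_n)}$ after the limit over shrinking cosets has been taken, using $\bigcap_n Z^{P_I^n}_{I,\bbC_p}=\bigcap_n Y_{I,\bbC_p}(\epsilon_n)^{ad}=Y^{ad}_{I,\bbC_p}$ and Prop.\ 1.3.3 of \cite{O}, a nontrivial cited input rather than a formal one. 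Likewise there is no sheaf-level ``thickened version built from the tubes'' on $\cY^{ad}_{\bbC_p}$: the tubes are open in $\P^d_{\bbC_p}$ and do not lie in $\cY^{ad}_{\bbC_p}$, so the replacement of translates by tubes cannot be made before applying ${\rm Ext}$.

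The second gap is the handling of the colimit $\varinjlim_c\Z_c$ in the contravariant variable. Your opening suggestion (Ext commutes with the filtered colimit, then take a colimit over refinements) is directionally wrong: ${\rm Hom}$ out of a colimit is a limit, which is exactly why the answer is a $\varprojlim_n$, and the issue is the derived functors of that limit. The paper resolves it by choosing an injective resolution $\cI^\bullet$ of $\bbQ_p$, rewriting ${\rm Hom}(i_\ast\varinjlim_n\bigoplus_g\Z_{Z^{gP_I^n}_{I,\bbC_p}},\cI^\bullet)$ as $\varprojlim_n\bigoplus_g H^0_{Z^{gP_I^n}_{I,\bbC_p}}(\P^d_{\bbC_p},\cI^\bullet)$, proving that $\varprojlim^{(r)}$, $r\geq 1$, of these terms vanishes for injective sheaves, and then killing the resulting $\varprojlim^{(1)}$ of the local cohomology groups via the topological Mittag-Leffler property of the Fr\'echet spaces $\bigoplus_g H^{i-1}_{Z^{gP_I^n}_{I,\bbC_p}}(\P^d_{\bbC_p},\bbQ_p)$ (which rests on Proposition \ref{inter_result}) together with \cite{EGAIII} 13.2.4. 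You do invoke Mittag-Leffler for the tube cohomologies, which is the right instinct for the second step, but without the injective-resolution reformulation and the first vanishing lemma you do not yet have the exact sequence in which Mittag-Leffler can be applied. With these ingredients restored, and the tube comparison performed inside the projective limit as in the paper, your argument becomes the paper's proof.
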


\proof Consider the  family
$$\Big\{gP_I^n\mid g\in G_0, n\in \N\Big\}$$
of compact open subsets in $G/P_I$ which yields cofinal coverings in $\cC_I.$
We obtain by (\ref{prod'=limInd}) the identity
$$\sideset{}{'}\prod\limits_{g\in G/P_I} \Z_{g,I}
=\varinjlim_{c\in {\mathcal C}_I} \Z_c = \varinjlim_{n\in \N} \bigoplus_{g\in G_0/P^n_I} \Z_{Z^{gP^n_I}_{I,\bbC_p}}.$$
Choose an injective resolution $\cI^\bullet$ of the sheaf $\bbQ_p$. 
We get
\begin{eqnarray*}
{\rm Ext}^i(i_\ast(\sideset{}{'}\prod\limits_{g\in G/P_I} \Z_{g,I}),\bbQ_p) &=& H^i({\rm Hom}(i_\ast(\sideset{}{'}\prod\limits_{g\in G/P_I} \Z_{g,I}), \cI^\bullet)) \\
= H^i({\rm Hom}(\varinjlim_{n\in \N} \bigoplus_{g\in G_0/P^n_I} i_\ast(\Z_{Z^{gP^n_I}_{I,\bbC_p}}), \cI^\bullet)) &  = & H^i(\varprojlim_{n\in \N} \bigoplus_{g\in G_0/P^n_I} {\rm Hom}(i_\ast(\Z_{Z^{gP^n_I}_{I,\bbC_p}}), \cI^\bullet)) \\
= H^i(\varprojlim_{n\in \N} \bigoplus_{g\in G_0/P^n_I} H^0_{Z^{gP^n_I}_{I,\bbC_p}}(\P^d_{\bbC_p},\cI^\bullet)) &  &  .
\end{eqnarray*}
We make use of the following lemma. Here $\varprojlim\nolimits_{n\in \N}^{(r)}$ is the r-th right derived functor of $\varprojlim\nolimits_{n\in \N}$.
\begin{Lemma}
Let  $\cI$  be an injective sheaf on the pro\'etale site of $(\P^d_{\bbC_p})^{ad}.$ Then 
$$\varprojlim\nolimits_{n\in \N}^{(r)} \bigoplus_{g\in G_0/P^n_I} H^0_{Z^{gP^n_I}_{I,\bbC_p}}(\P^d_{\bbC_p},\cI)=0 \mbox{ for } r\geq 1. $$
\end{Lemma}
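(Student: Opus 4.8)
The plan is to show the vanishing of $\varprojlim^{(r)}$ for the relevant inverse system by producing a Mittag-Leffler type surjectivity statement at the level of the transition maps. Concretely, set $A_n := \bigoplus_{g\in G_0/P^n_I} H^0_{Z^{gP^n_I}_{I,\bbC_p}}(\P^d_{\bbC_p},\cI)$ and recall that the inverse system $(A_n)_{n\in\N}$ with its natural transition maps $A_{n+1}\to A_n$ computes $\varprojlim^{(\bullet)}$. First I would observe that since $\cI$ is injective, the functor $\Gamma_{Z^W_{I,\bbC_p}}(\P^d_{\bbC_p},-)$ applied to $\cI$ is exact in $\cI$, and more importantly that for a finite disjoint decomposition $W = \bigsqcup_j W_j$ of a compact open $W\subset G/P_I$ into smaller compact opens, one has $\cI$-sections with support in $Z^W_{I,\bbC_p}$ decomposing compatibly, because $Z^W_{I,\bbC_p} = \bigsqcup_j Z^{W_j}_{I,\bbC_p}$ is a disjoint union of closed pseudo-adic subspaces (the tubes $gY_{I,\bbC_p}(\epsilon_n)$ shrink as $n$ grows, so distinct cosets $gP^n_I$ give disjoint tubes for $n$ large). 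Hence each transition map $A_{n+1}\to A_n$ is, coset by coset, a finite direct sum of restriction maps $H^0_{Z^{gP^{n+1}_I}_{I,\bbC_p}}(\P^d_{\bbC_p},\cI)\to H^0_{Z^{gP^n_I}_{I,\bbC_p}}(\P^d_{\bbC_p},\cI)$ coming from the inclusion $Z^{gP^{n+1}_I}_{I,\bbC_p}\subset Z^{gP^n_I}_{I,\bbC_p}$.

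Next I would invoke injectivity of $\cI$ to get surjectivity: the inclusion of closed pseudo-adic subspaces $j: Z^{gP^{n+1}_I}_{I,\bbC_p}\hookrightarrow Z^{gP^n_I}_{I,\bbC_p}$ induces an inclusion of the corresponding ``sections with support'' sheaves (equivalently, a monomorphism $i_\ast \Z_{Z^{gP^{n+1}_I}}\hookrightarrow i_\ast \Z_{Z^{gP^n_I}}$ of pro-\'etale sheaves on $(\P^d_{\bbC_p})^{ad}$), and applying $\mathrm{Hom}(-,\cI)$ to this monomorphism yields a surjection $\mathrm{Hom}(i_\ast\Z_{Z^{gP^n_I}},\cI)\twoheadrightarrow \mathrm{Hom}(i_\ast\Z_{Z^{gP^{n+1}_I}},\cI)$, i.e. $H^0_{Z^{gP^n_I}_{I,\bbC_p}}(\P^d_{\bbC_p},\cI)\twoheadrightarrow H^0_{Z^{gP^{n+1}_I}_{I,\bbC_p}}(\P^d_{\bbC_p},\cI)$. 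Wait --- this goes the wrong way for the transition maps, which run $A_{n+1}\to A_n$; I would instead want the maps $A_{n+1}\to A_n$ to be surjective, which corresponds to the inclusion $\Z_{Z^{gP^n_I}}\hookrightarrow \Z_{Z^{gP^{n+1}_I}}$? No: $Z^{gP^{n+1}_I}\subset Z^{gP^n_I}$ gives $i_\ast\Z_{Z^{gP^{n+1}_I}}\hookrightarrow i_\ast\Z_{Z^{gP^n_I}}$ (extension by zero of the constant sheaf on the larger closed set surjects onto that of the smaller), hence $\mathrm{Hom}(i_\ast\Z_{Z^{gP^n_I}},\cI)\to\mathrm{Hom}(i_\ast\Z_{Z^{gP^{n+1}_I}},\cI)$ is the transition map and it is surjective by injectivity of $\cI$. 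So after re-indexing correctly, each transition map $A_{n+1}\to A_n$ (restriction to a finer covering, hence to smaller tubes) is a direct sum over cosets of surjections, and on each new coset of $G_0/P^{n+1}_I$ lying over a fixed coset of $G_0/P^n_I$ the map is surjective; summing, $A_{n+1}\to A_n$ is surjective.

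An inverse system of abelian groups with surjective transition maps satisfies Mittag-Leffler, so $\varprojlim^{(r)}_{n} A_n = 0$ for all $r\geq 1$, which is the assertion of the lemma. I would then feed this back into the computation in Proposition \ref{Ext=projlim}: the spectral sequence $\varprojlim^{(r)}_n H^q(\bigoplus_g H^0_{Z^{gP^n_I}}(\P^d_{\bbC_p},\cI^\bullet)) \Rightarrow \mathrm{Ext}^{r+q}$ degenerates, giving $\mathrm{Ext}^\ast(i_\ast(\sideset{}{'}\prod_g\Z_{g,I}),\bbQ_p) = \varprojlim_n \bigoplus_{g\in G_0/P^n_I} H^\ast_{gY_{I,\bbC_p}(\epsilon_n)}(\P^d_{\bbC_p},\bbQ_p)$, using also that $Z^{gP^n_I}_{I,\bbC_p}$ is, for $n$ large, the disjoint tube $\bigsqcup gY_{I,\bbC_p}(\epsilon_n)$ so its local cohomology splits as the stated direct sum.

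The main obstacle I anticipate is justifying cleanly that $Z^{gP^{n}_I}_{I,\bbC_p}$ really is the disjoint union of the individual shrinking tubes $g'Y_{I,\bbC_p}(\epsilon_n)$ over $g'P^n_I\in gP^n_I/P^n_I$ --- i.e. that the stabilizer of $\P^{j(I)}_{\bbC_p}(\epsilon_n)$ in $G_0$ is exactly $P^n_I$ (so the tubes attached to distinct cosets are genuinely disjoint) and that these disjoint unions behave well as closed pseudo-adic subspaces under the functor $i_\ast(\Z_{(-)})$. This is essentially bookkeeping with the combinatorics of $\epsilon_n$-neighborhoods already set up in \S1 and in \cite{O}, but it is where the argument has to be made precise; once the disjointness and the monomorphisms $i_\ast\Z_{Z^{gP^{n+1}_I}}\hookrightarrow i_\ast\Z_{Z^{gP^n_I}}$ are in hand, the $\mathrm{ML}$ argument via injectivity of $\cI$ is formal.
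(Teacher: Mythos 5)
Your overall strategy --- get surjectivity of the transition maps $A_{n+1}\to A_n$ from the injectivity of $\cI$, then conclude by Mittag--Leffler together with the vanishing of $\varprojlim^{(r)}$, $r\geq 2$, for $\N$-indexed systems --- is the intended one; the paper's proof is just a reference to \cite[Lemma 2.2.2]{O}, which runs on exactly this mechanism. However, two of the steps you use to produce the surjectivity are wrong as stated. First, the disjointness claim fails: for distinct cosets $gP^n_I\neq g'P^n_I$ the closed sets $Z^{gP^n_I}_{I,\bbC_p}$, $Z^{g'P^n_I}_{I,\bbC_p}$ (and the tubes $g\P^{j(I)}_{\bbC_p}(\epsilon_n)$) are in general \emph{not} disjoint, no matter how large $n$ is; e.g.\ for $j(I)=d-1$ any two translates of $\P^{d-1}_{\bbC_p}$ meet, and the same happens whenever $2j(I)\geq d$. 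The decomposition $\Z_{c_n}\cong\bigoplus_{g\in G_0/P^n_I}\Z_{Z^{gP^n_I}_{I,\bbC_p}}$ is not a consequence of disjointness of the closed sets: it is built into the definition of $\Z_c$ as an image inside $\prod_{g\in G/P_I}\Z_{g,I}$, using only disjointness of the index sets $gP^n_I$ in $G/P_I$. Second, the monomorphism you invoke does not exist: for closed subsets $Z^{gP^{n+1}_I}_{I,\bbC_p}\subset Z^{gP^n_I}_{I,\bbC_p}$ the natural map of sheaves is the restriction $i_\ast\Z_{Z^{gP^n_I}_{I,\bbC_p}}\to i_\ast\Z_{Z^{gP^{n+1}_I}_{I,\bbC_p}}$, and it is an epimorphism (an isomorphism on stalks at points of the smaller set, zero target elsewhere), not a monomorphism in the direction you claim; applying $\mathrm{Hom}(-,\cI)$ to it only yields the obvious inclusion $H^0_{Z^{gP^{n+1}_I}_{I,\bbC_p}}(\P^d_{\bbC_p},\cI)\hookrightarrow H^0_{Z^{gP^n_I}_{I,\bbC_p}}(\P^d_{\bbC_p},\cI)$. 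Moreover the map you finally declare to be ``the transition map'' runs from the level-$n$ group to the level-$(n+1)$ group, contradicting your own (correct) observation that the transitions run $A_{n+1}\to A_n$; on the block over a coset $gP^n_I$ the actual transition map is the sum map $\bigoplus_{g'P^{n+1}_I\subset gP^n_I}H^0_{Z^{g'P^{n+1}_I}_{I,\bbC_p}}(\P^d_{\bbC_p},\cI)\to H^0_{Z^{gP^n_I}_{I,\bbC_p}}(\P^d_{\bbC_p},\cI)$, which no single coset can account for.

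The correct input, which needs no disjointness, is the monomorphism $\Z_{c_n}\hookrightarrow\Z_{c_{n+1}}$ of the system (\ref{prod'=limInd}): on the block over $gP^n_I$ it is the map $i_\ast\Z_{Z^{gP^n_I}_{I,\bbC_p}}\to\bigoplus_{g'P^{n+1}_I\subset gP^n_I}i_\ast\Z_{Z^{g'P^{n+1}_I}_{I,\bbC_p}}$ assembled from the restriction epimorphisms, and it is injective because the $Z^{g'P^{n+1}_I}_{I,\bbC_p}$ with $g'P^{n+1}_I\subset gP^n_I$ \emph{cover} $Z^{gP^n_I}_{I,\bbC_p}$ (a locally constant function vanishing on each member of a closed covering vanishes). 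Since $i_\ast$ and $\nu^\ast$ preserve monomorphisms, applying $\mathrm{Hom}(-,\cI)$ with $\cI$ injective to this monomorphism gives precisely the surjectivity of $A_{n+1}\to A_n$; Mittag--Leffler then kills $\varprojlim^{(1)}$, and $\varprojlim^{(r)}=0$ for $r\geq 2$ holds for any countable system \cite{Je}. With this repair your argument coincides with \cite[Lemma 2.2.2]{O}. Your closing assertion that $Z^{gP^n_I}_{I,\bbC_p}$ equals a disjoint union of tubes for large $n$ is likewise false and unnecessary: the comparison with the tubes $Y_{I,\bbC_p}(\epsilon_n)$ is a cofinality statement used only after taking $\varprojlim_n$, via \cite[Proposition 1.3.3]{O}, not an equality at each finite level.
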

\begin{proof}
The proof works in the same way as in \cite[Lemma 2.2.2]{O}.
\end{proof}
\noindent Thus we get by applying  a spectral sequence argument (note that $\varprojlim^{(r)}=0 $ for $r\geq 2$ \cite{Je}) short exact sequences, $i\in \N,$
$$0\rightarrow {\varprojlim_{n}}^{(1)}\!\!\!\!\! \bigoplus_{g\in G_0/P^n_I} \!\!\!\!\!  
H^{i-1}_{Z^{gP^n_I}_{I,\bbC_p}}(\P^d_{\bbC_p},\bbQ_p) \rightarrow  {\rm Ext}^i(i_\ast(\sideset{}{'}\prod\limits_{g\in G/P_I} \Z_{g,I}),\bbQ_p)
\!\rightarrow \varprojlim_{n} \!\!\bigoplus_{g\in G_0/P^n_I} \!\!\!\!\!  H^i_{Z^{gP^n_I}_{I,\bbC_p}}(\P^d_{\bbC_p},\bbQ_p) \rightarrow 0 .$$

\begin{Lemma} The projective system
$\Big(\bigoplus_{g\in G_0/P^n_I} H^{i-1}_{Z^{gP^n_I}_{I,\bbC_p}}(\P^d_{\bbC_p},\bbQ_p)\Big)_{n\in \N}$ consists of $\bbQ_p$-Fr\'echet spaces and satisfies the
(topological) Mittag-Leffler property for all $i\geq 1$.
\end{Lemma}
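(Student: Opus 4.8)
The plan is to establish both assertions — that each term is a Fréchet space and that the transition maps form a Mittag-Leffler system — by reducing to the affinoid-level description of the local cohomology groups already set up in the excerpt.

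First I would recall that for fixed $n$ and fixed $g$, the group $H^{i-1}_{Z^{gP^n_I}_{I,\bbC_p}}(\P^d_{\bbC_p},\bbQ_p)$ is (by the cofinality of the coverings $gP^n_I$ and the definition of $Z^{gP^n_I}_{I,\bbC_p}$) identified with the local cohomology $H^{i-1}_{gY_{I,\bbC_p}(\epsilon_n)}(\P^d_{\bbC_p},\bbQ_p)$ with support in the tube of the shifted linear subspace, so after translating by $g$ it is a copy of $H^{i-1}_{\P^{j(I)}_{\bbC_p}(\epsilon_n)}(\P^d_{\bbC_p},\bbQ_p)$. By Proposition \ref{inter_result} this group sits in an extension with subobject $C$ a (completed) tensor product $H^{d-j}_{\P^j_K(\epsilon_n)}(\P^d_K,\Omega^{\dag,\ast}/D^{\dag,\ast})\hat\otimes_K \bbC_p(-\ast)$ and quotient $D$ a finite-dimensional local de Rham cohomology group. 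The quotient $D$ is a finite-dimensional $\bbQ_p$-vector space, hence Fréchet; the subobject $C$ is, by Remark \ref{rmk_17} together with Proposition \ref{letzte_prop_1}, a $K$-Fréchet space (it is a quotient of the Fréchet space $H^{d-j}_{\P^j_K(\epsilon_n)}(\P^d_K,\Omega^{s-1})$ by the closed subspace $H^{d-j}_{\P^j_K(\epsilon_n)}(\P^d_K,D^{s-1})$, using the strictness of $d^{s-1}$), and $\hat\otimes_K\bbC_p$ preserves Fréchet. An extension of a Fréchet space by a Fréchet space is Fréchet, and the finite direct sum over $g\in G_0/P^n_I$ of Fréchet spaces is Fréchet; this gives the first claim.

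For the Mittag-Leffler property I would argue at the level of the Čech description. Using Remark \ref{rmk_17}, the group $H^{d-j}_{\P^j_K(\epsilon_n)}(\P^d_K,\Omega^{s-1})$ is the projective limit over $\epsilon\to\epsilon_n$ of $K$-Banach spaces $H^{d-j}(C^\bullet_\epsilon\Omega^{s-1})$, and the transition maps between the affinoid complexes — restriction maps $\Omega^s(\bar V(\cdot;\epsilon))\to\Omega^s(\bar V(\cdot;\epsilon'))$ for $\epsilon'<\epsilon$ — are injective, continuous, with dense image, and the same holds for the $D^s$-subcomplexes by the strictness of the differential noted there. Passing from $\epsilon_n$ to $\epsilon_{n+1}$ corresponds to such a restriction, so the transition maps in the $\bbQ_p$-analogue $(\bigoplus_{g} H^{i-1}_{Z^{gP^n_I}_{I,\bbC_p}}(\P^d_{\bbC_p},\bbQ_p))_n$ have dense image on the $C$-part (the completed tensor with $\bbC_p$ does not destroy density), while on the finite-dimensional $D$-part the de Rham cohomology of $\P^d_K\setminus\P^j_K(\epsilon_n)$ is visibly independent of $n$ (it is $\bigoplus_{i}\bbQ_p[-2i]$ through degree $j$), so the transition maps there are isomorphisms. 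An extension of ML systems is ML, and a finite direct sum of ML systems is ML; hence the whole system satisfies the topological Mittag-Leffler property.

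The main obstacle I expect is the bookkeeping needed to see that the $g$-translates can be organized compatibly as $n$ varies: when one passes from $P^n_I$ to $P^{n+1}_I$ the index set $G_0/P^n_I$ is refined, and one must check that the resulting transition maps on the direct sums are block-diagonal restriction maps of exactly the type controlled by Remark \ref{rmk_17}, rather than something that mixes different cosets. This is essentially the content of the compatibility of the coverings $Z^{gP^n_I}_{I,\bbC_p}$ with inclusion, already used implicitly in the proof of Proposition \ref{Ext=projlim}, and one should also verify that the extension in Proposition \ref{inter_result} is functorial in $n$ (i.e. the restriction maps respect the subobject $C$), which follows because $C$ is the image of local cohomology with $\Omega^{\dag,s-1}/D^{\dag,s-1}$-coefficients and this is natural in $\epsilon_n$. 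Once these compatibilities are in place, the Fréchet and Mittag-Leffler assertions are formal consequences of the two-term structure from Proposition \ref{inter_result} and the density statement of Remark \ref{rmk_17}.
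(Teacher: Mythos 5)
Your overall dévissage is the right one, and it is essentially the paper's: split the local cohomology via the extension of Proposition \ref{inter_result} into a finite-dimensional de Rham part, whose system is essentially constant, and a part of the form $H^{d-j}_{\P^j_K(\epsilon_n)}(\P^d_K,\Omega^{\dag,s-1}/D^{\dag,s-1})\hat{\otimes}_K\bbC_p$, which is controlled by the density statements around Remark \ref{rmk_17}; the paper phrases exactly this as ``proceed as in \cite[Lemma 2.2.3]{O}, replacing the Zariski local cohomology groups by the extensions of Proposition \ref{inter_result} and treating the two parts separately.''

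The genuine gap is your very first step. The lemma is about $H^{i-1}_{Z^{gP^n_I}_{I,\bbC_p}}(\P^d_{\bbC_p},\bbQ_p)$, where $Z^{gP^n_I}_{I,\bbC_p}=\bigcup_{h\in gP^n_I}hY^{ad}_{I,\bbC_p}$ is the union of the $K$-rational translates of the linear subspace; this is a proper closed subset of the tube $gY_{I,\bbC_p}(\epsilon_n)$, which also contains points lying on no translate of $\P^{j(I)}$. Hence your claimed identification, at each fixed level $n$, with $H^{i-1}_{gY_{I,\bbC_p}(\epsilon_n)}(\P^d_{\bbC_p},\bbQ_p)$ is not available and is in general false (already for $d=1$ and $Y_I$ a point: the complement of the set of $K$-rational points of a disc is a Drinfeld-type space whose pro-\'etale $H^1$ differs drastically from that of the complement of the disc). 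The two families of supports agree only in the limit, $\bigcap_n Z^{P^n_I}_{I}=\bigcap_n Y_{I}(\epsilon_n)=Y_{I}$, and the resulting identification of the $\varprojlim_n$'s is performed in the proof of Proposition \ref{Ext=projlim} only \emph{after} the present lemma has been used to kill $\varprojlim^{(1)}$; it cannot be imported into the proof of the lemma, and no interleaving argument rescues this, since no tube is contained in any $Z^{gP^m_I}_{I}$. As written, therefore, neither your Fr\'echet claim nor your description of the transition maps is established: one must first, as in \cite[Lemma 2.2.3]{O}, describe $H^{\ast}_{Z^{gP^n_I}_{I,\bbC_p}}(\P^d_{\bbC_p},\bbQ_p)$ itself as a projective limit over $m\geq n$ of direct sums of tube-supported groups indexed by the cosets $h\in gP^n_I/P^m_I$, deduce the Fr\'echet structure from that description, and obtain density of the transition images from a subspace sitting densely and compatibly in all terms of the tower --- in \cite{O} the algebraic local cohomology, here its analogue for $\Omega^{s-1}/D^{s-1}$ (Remarks \ref{rmk_17} and \ref{remark_cato}) on the $C$-part, the $D$-part being essentially constant. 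A secondary point: ``an extension of Fr\'echet spaces is Fr\'echet'' requires the extension to be topologically exact, which is precisely the strictness issue the paper flags in the footnote to its main theorem, so it should be addressed rather than assumed.
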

\begin{proof}
The proof works in the same way as in \cite[Lemma 2.2.3]{O}. Additionally one replaces the Zariski local cohomology groups 
in loc.cit by the extensions in Proposition \ref{inter_result} and considers the corresponding LHSs and RHSs separately. Whereas 
the situation of the RHSs is trivial the LHSs are treated in the same as in loc.cit. 
\end{proof}

\noindent We deduce  from  \cite{EGAIII} 13.2.4 that 
$$\varprojlim_{n\in \N}\nolimits^{(1)}  \Big(\bigoplus_{g\in G_0/P^n_I} H^{i-1}_{Z^{gP^n_I}_{I,\bbC_p}}(\P^d_{\bbC_p},\bbQ_p)\Big)_{n\in \N} = 0.$$
We obtain the identity
$${\rm Ext}^i(i_\ast(\sideset{}{'}\prod\limits_{g\in G/P_I} \Z_{g,I}),\bbQ_p)\, \cong \, \varprojlim_{n\in \N} \bigoplus_{g\in G_0/P^n_I} H^i_{Z^{gP^n_I}_{I,\bbC_p}}(\P^d_{\bbC_p}, \bbQ_p) . $$
On the other hand, we have  
$$\bigcap_{n\in \N} Z_{I,\bbC_p}^{P_I^n} = \bigcap_{n\in \N} Y_{I,\bbC_p}(\epsilon_n)^{ad} = Y_{I,\bbC_p}^{ad}.$$ Again, by applying 
\cite[Proposition 1.3.3]{O}\footnote{The assumption that the sheaf is coherent is not needed here},  we deduce the identity 
\begin{eqnarray*}
\varprojlim_{n}  H^\ast_{Z^{P^n_I}_{I,\bbC_p}}(\P^d_{\bbC_p}, \bbQ_p) & = & \varprojlim_{n}  H^\ast_{Y_{I,\bbC_p}(\epsilon_n)}(\P^d_{\bbC_p}, \bbQ_p). \\
\end{eqnarray*}
We get
\begin{eqnarray*}
\varprojlim_{n} \bigoplus_{g\in G_0/P^n_I} H^\ast_{Z^{gP^n_I}_{I,\bbC_p}}(\P^d_{\bbC_p}, \bbQ_p)  & = &  \varprojlim_{n}\bigoplus_{g\in G_0/P_I^n} H_{gY_{I,\bbC_p}(\epsilon_n)}^\ast(\P^d_{\bbC_p}, \bbQ_p) \\
\end{eqnarray*}
\noindent Thus the statement of our proposition is proved.
\qed

We analyze now the spectral sequence
\begin{multline}\label{ss}
E_1^{-p,q} =  {\rm Ext}^q(\!\!\!\!\!\bigoplus_{I \subset \Delta
\atop |\Delta\setminus I|=p+1}\!\!\!\!\! i_\ast(\sideset{}{'}\prod_{g\in
G/P_I}\limits \Z_{g,I}),\bbQ_p)  \Rightarrow {\rm
Ext}^{-p+q}(i_\ast(\Z_{{\cY}_{\bbC_p}^{ad}}), \bbQ_p)=
H^{-p+q}_{{\cY}_{\bbC_p}^{ad}}(\P_{\bbC_p}^d, \bbQ_p)
\end{multline}
induced by the acyclic complex (\ref{complex}) in Theorem \ref{Theorem1}. By applying  Proposition \ref{inter_result}   the term 
$H_{gY_{I,\bbC_p}(\epsilon_n)}^q(\P^d_{\bbC_p}, \bbQ_p) $ which appears in $E_1^{-p,q}$ as a direct summand is 
for $q\geq 2(d-j(I))$ a
an extension
$$ 0 \to F \to H_{gY_{I,\bbC_p}(\epsilon_n)}^q(\P^d_{\bbC_p}, \bbQ_p) \to G \to 0$$
where 
$$F=H^{d-j(I)}_{g\P_K^{j(I)}(\epsilon_n)}(\P^d_K,\Omega^{\dag,q-(d-j(I))-1}/D^{\dag,(q-(d-j(I))-1})  \hat{\otimes}_K \bbC_p(q-(d-j(I)))$$
and 
$$G=H^q_{dR}(\P^d_K,\bbQ_p)(-\frac{q}{2}).$$
It is equal to
$$H^{d-j(I)}_{g\P_K^{j(I)}(\epsilon_n)}(\P^d_K,\Omega^{\dag,q-(d-j(I))-1}/D^{\dag,(q-(d-j(I))-1})  \hat{\otimes}_K \bbC_p(q-(d-j(I)))$$
for $2(d-j(I))>q\geq d-j(I).$
 Hence we may write $E_1^{\bullet,\bullet}$ as an extension of  double complexes
$$0 \to F_1^{\bullet,\bullet} \to E_1^ {\bullet,\bullet} \to  G_1^{\bullet,\bullet} \to 0.$$
The $F_1$-term splits as a direct sum $F_1=\bigoplus_{s=1}^d F_{1,s}$ where $F_{1,s}$ is the sub double complex
with fixed Tate twist $s=q-(d-j(I)),$ i.e.,
$$F_{1,s}^{p,q}= \varprojlim_n \bigoplus_{I \subset \Delta \atop |\Delta\setminus I|=p+1} \bigoplus_{g\in G_0/P^n_I} 
H^{q-s}_{g\P_K^{d-(q-s)}(\epsilon_n)}(\P^d_K,\Omega^{\dag,s-1}/D^{\dag,s-1})\hat{\otimes}_K \bbC_p(s).$$
Up to the tensor factor $\hat{\otimes}_K \bbC_p(s)$ the object $F_{1,s}$ is the $E_1$-term of a spectral sequence
considered in \cite{O} with respect to the equivariant sheaf $\Omega^{\dag,s-1}/D^{\dag,s-1}$. The computation 
in \cite[p. 633]{O} shows that the only non-vanishing entries $F_{2,s}^{p,q}$ are given by the tuples $(p,q)=(-j+1,d-s+j), 
j=1,\ldots,d$ and that $H^1_{\cY}(\bbP_K^d,\Omega^{\dag,s-1}/D^{\dag,s-1})\hat{\otimes}_K \bbC_p(s)$ is a successive extension of these non-vanishing
objects.

Concerning the  double complex $G_1^{\bullet,\bullet}$ there are the following (non-trivial) rows
\begin{eqnarray*}
G_1^{\bullet,2d}  &   : ( &  \varprojlim_{n}\bigoplus_{g\in G_0/P_\emptyset^n} \bbQ_p \leftarrow 
\bigoplus_{I\subset \Delta \atop { \#I=1 }} \varprojlim_{n}\bigoplus_{g \in G_0/P_I^n} \bbQ_p
\leftarrow \bigoplus_{I\subset \Delta \atop { \#I=2}} \varprojlim_{n}\bigoplus_{g \in G_0/P_I^n} \bbQ_p 
\\ & & \leftarrow  \ldots \leftarrow \bigoplus_{I\subset \Delta \atop { \#I=d-1}} \varprojlim_{n}
\bigoplus_{g \in G_0/P_I^n} \bbQ_p)(-d)\\
\end{eqnarray*}
\begin{eqnarray*} 
G_1^{\bullet,2(d-1)} & : ( & \varprojlim_{n} \bigoplus_{g\in G_0/P_{(2,1,\ldots,1)}^n}\!\!\!\!\!\!\! \bbQ_p \leftarrow 
\bigoplus_{I\subset \Delta \atop {\#I=2 \atop \alpha_0\in I }} \varprojlim_{n}\bigoplus_{g \in G_0/P_I^n} \bbQ_p
\leftarrow \bigoplus_{I\subset \Delta \atop { \#I=3 \atop \alpha_0\in I}} \varprojlim_{n}\bigoplus_{g \in G_0/P_I^n}\bbQ_p \\ & & 
\leftarrow  \ldots  \leftarrow \bigoplus_{I\subset \Delta \atop { \#I=d-1  \atop \alpha_0\in I }} 
\varprojlim_{n}\bigoplus_{g \in G_0/P_I^n} \bbQ_p)(-(d-1))\\
\end{eqnarray*}
\begin{eqnarray*}
& & \centerline{$\vdots$}\\ \\
G_1^{\bullet,2j} & : ( & \varprojlim_{n} \bigoplus_{g\in G_0/P_{(d+1-j,1,\ldots,1)}^n} 
\!\!\!\!\!\!\! \!\!\!\!\!\!\!\bbQ_p\leftarrow 
\bigoplus_{I\subset \Delta \atop { \#I=d-j+1 \atop \alpha_0,\ldots, \alpha_{d-j-1}\in I}}
\varprojlim_{n}\bigoplus_{g \in G_0/P_I^n} \bbQ_p \leftarrow 
\bigoplus_{I\subset \Delta \atop { \#I=d-j+2 \atop \alpha_0,\ldots, \alpha_{d-j-1} \in I}}
\varprojlim_{n}\bigoplus_{g \in G_0/P_I^n} \bbQ_p \\ & & \leftarrow \ldots  \leftarrow 
\bigoplus_{I\subset \Delta \atop { \#I=d-1 \atop \alpha_0,\ldots, \alpha_{d-j-1}\in I}}
\varprojlim_{n} \bigoplus_{g \in G_0/P_I^n}  \bbQ_p)(-j)\\
\end{eqnarray*}
\begin{eqnarray*}\\ 
& & \centerline{$\vdots$}  \\
G_1^{0,2} &: &  \varprojlim_{n}\bigoplus_{g\in G_0/P_{(d,1)}^n} \bbQ_p(-1).
\end{eqnarray*}
\noindent Here, the very left term in each row $G^{\bullet,2j}_1$ sits in degree $-j+1.$
We can rewrite these complexes in terms of induced representations. Here we abbreviate 
$$(d+1-j,1^j):=(d+1-j,1,\ldots,1)$$
for any decomposition $(d+1-j,1,\ldots,1)$ of $d+1.$
Let ${\rm Ind}^{\infty,G}_P$ denote the (unnormalized) smooth
induction functor for a parabolic subgroup $P \subset G$. The dual of the row $G_1^{\bullet,2j}$
coincides with the complex
$${\rm Ind}^{\infty,G}_{P_{(d+1-j,1^j)}} \bbQ_p \rightarrow \!\!\!\!\!\!\!\!\!\! \bigoplus_{I\subset \Delta \atop { \#I=d-j+1 \atop \alpha_0,\ldots, \alpha_{d-j-1}\in I }}\!\!\!\!\!\!\!\!\!\! {\rm Ind}^{\infty, G}_{P_I} \bbQ_p
\rightarrow
\ldots  \rightarrow \bigoplus_{I\subset \Delta \atop {\#(\Delta\setminus I)=1 \atop \alpha_0,\ldots ,\alpha_{d-j-1} \in I}} {\rm Ind}^{\infty,G}_{P_I} \bbQ_p .$$
\medskip
Each of the complexes $G^{\bullet,2j}_1, \; j=1,\ldots,d$,  is acyclic apart 
from the very left and right position \cite[Thm. 7.1.9]{DOR}. Indeed, let
$$v^G_{P_{(d+1-j,1^j)}}(\bbQ_p):= {\rm Ind}^{\infty,G}_{P_{(d+1-j,1^j)}}\bbQ_p/\sum_{Q\supsetneq P_{(d+1-j,1^j)} } {\rm Ind}^{\infty,G}_{Q} \bbQ_p$$ be the smooth generalized Steinberg representation  with respect to the parabolic subgroup  $P_{(d+1-j,1^j)}.$ This is an irreducible smooth $G$-representation, cf.\, \cite[ch. X]{BW}.  We deduce that 
the only non-vanishing entries in $G_2^{p,q}$ are given by the indices $(p,q)=(-j+1,j),\; j=1,\ldots,d,$ and $(p,q)=(0,2j),\; j\geq 2.$
Here we get for $j\geq 2$, $$G_2^{-j,j+1}=v^G_{P_{(d+1-j,1^j)}}(\bbQ_p)(-j)'  \mbox{ and } G_2^{0,2j}=\bbQ_p(-j)$$ and 
$$G_2^{0,2}=({\rm Ind}^{\infty,G}_{P_{(d,1)}} \bbQ_p)'(-1). $$

\medskip
Considered as $U(\frt)$-modules the objects in $G_2$ consist of copies of the trivial representation. Again as in Lemma \ref{not_include_trivial} the contributions of $F_2$ do not contain 
the trivial representation. Hence there are no non-trivial homomorphisms between $G$ and 
$F$ and  we get in particular $E_2=E_\infty$.

For any integer $s\geq 1$, let $V_s^\bullet= V_s^{-d+1} \supset V_s^{-d+2} \supset \cdots\supset  V_s^{-1} \supset V^0_s \supset V^1_s= (0)$ be the canonical filtration on 
$V_s^{-d+1}=H^1_{\cY}(\P^d_K,\Omega^{\dag,s-1}/D^{\dag,s-1})$  defined by the spectral sequence
$$
E_{1,s}^{-p,q} =  {\rm Ext}^q(\!\!\!\!\!\bigoplus_{I \subset \Delta
\atop |\Delta\setminus I|=p+1}\!\!\!\!\! i_\ast(\sideset{}{'}\prod_{g\in
G/P_I}\limits \Z_{g,I}),\Omega^{\dag,s-1}/D^{\dag,s-1})  \Rightarrow H^{-p+q}_{{\cY}^{ad}}(\P_{\bbC_p}^d, \Omega^{\dag,s-1}/D^{\dag,s-1})
$$
induced by the acyclic complex (\ref{complex}) in Theorem \ref{Theorem1}.
In \cite[Lemma 2.7]{O} we proved that in the case of vector bundles that these subspaces  are closed. On the other hand,  
we proved \cite{O2} that the filtration behaves functorial with respect to morphisms of 
vector bundles\footnote{In loc.cit. we stated the lemma for morphisms of vector bundles. But as one see without any difficulties  from the proof this fact is true for arbitrary sheaf morphisms}. 
Applying this fact to the (continuous) morphism $d^s:\Omega^s \to \Omega^{s+1}$ we  deduce that  the subspaces $V^j_s$ are closed in the $K$-Fr\'echet space $H^1_{\cY}(\P^d_K,\Omega^{\dag,s-1}/D^{\dag,s-1})$.

Consider finally  for $\cF^\dag=\Omega^{\dag,s-1}/D^{\dag,s-1}$ the topological exact $G$-equivariant sequence of $K$-Fr\'echet spaces
$$\ldots \rightarrow H^0(\P^d_K,\cF^\dag) \rightarrow H^0(\cX,\cF^\dag)\stackrel{p}{\rightarrow} H^{1}_{\cY}(\P^d_K,\cF^\dag) 
\rightarrow  H^{1}(\P^d_K,\cF^\dag)\rightarrow \ldots.$$
By Proposition \ref{vanishing_OmodD} the terms $H^i(\P^d,\cF^\dag)=0$ vanish for $i\geq 0.$ Hence $p$ is an isomorphism.
For $i=0,\ldots,d,$ we set
$$Z^i_s:=p^{-1}(V^{-d+i+1}_s).$$
Thus we get a $G$-equivariant filtration by closed $K$-Fr\'echet spaces 
$$Z^{0}_s \supset Z^{1}_s \supset \cdots \supset Z_s^{d-1} \supset Z_s^d$$ on $Z_0^{d}=H^0(\cX,\cF^\dag).$

Summarizing the evaluation  of the spectral sequence  we obtain the following theorems
mentioned in the introduction.

\begin{Theorem}
For $j=1,\ldots,d,$ the $p$-adic  pro-\'etale cohomology groups of $\cX_{\bbC_p}$ are extensions of continuous  $G\times \Gamma_K$-represen\-tations 
$$0 \rightarrow (\Omega^{\dag,s-1}/D^{\dag,s-1})(\cX)\hat{\otimes}_K \bbC_p(-s) \rightarrow H^s(\cX_{\bbC_p},\bbQ_p) \rightarrow 
v^{G}_{{P_{(d-s+1,1,\ldots,1)}}}(\bbQ_p)'(-s) \rightarrow 0. $$
\end{Theorem}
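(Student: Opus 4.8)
The plan is to read the extension off from the localization long exact sequence (\ref{long_exact_2}) once the pieces assembled above are in place. First I would record that, by quasi-compactness of projective space, $H^i(\P^d_{\bbC_p},\bbQ_p)=\bbQ_p(-i/2)$ for even $i$ with $0\le i\le 2d$ and vanishes otherwise; so in the relevant range it is a Tate-twisted \emph{trivial} $G$-representation of dimension at most one. Splitting (\ref{long_exact_2}) accordingly, for each $s$ one obtains a short exact sequence with $H^s(\cX_{\bbC_p},\bbQ_p)$ in the middle, left term ${\rm coker}\big(H^s_{\cY_{\bbC_p}}(\P^d_{\bbC_p},\bbQ_p)\to H^s(\P^d_{\bbC_p},\bbQ_p)\big)$ and right term $\ker\big(\delta\colon H^{s+1}_{\cY_{\bbC_p}}(\P^d_{\bbC_p},\bbQ_p)\to H^{s+1}(\P^d_{\bbC_p},\bbQ_p)\big)$. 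It thus suffices to control $H^{\bullet}_{\cY_{\bbC_p}}(\P^d_{\bbC_p},\bbQ_p)$ in degrees $s,s+1$ together with these two maps.

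For this I would use the spectral sequence (\ref{ss}) and its evaluation above. Since $E_2=E_\infty$, and since the $G_2$-terms are built from smooth representations (so have trivial $\frg$-action) while the $F_2$-terms carry no trivial $\frt$-weight (by the argument of Lemma \ref{not_include_trivial}), every continuous $G$-equivariant map between a cohomology of $F_1$ and a cohomology of the quotient $G_1$ vanishes; hence the long exact sequence of $0\to {\rm Tot}\,F_1\to {\rm Tot}\,E_1\to {\rm Tot}\,G_1\to 0$ breaks into short exact sequences
$$0\to H^n({\rm Tot}\,F_1)\to H^n_{\cY_{\bbC_p}}(\P^d_{\bbC_p},\bbQ_p)\to H^n({\rm Tot}\,G_1)\to 0 .$$
Each summand $F_{1,t}$ is, up to the twist $\bbC_p(-t)$ and a shift by $t$ in cohomological degree, the $E_1$-page of the spectral sequence of \cite{O} attached to $\Omega^{\dag,t-1}/D^{\dag,t-1}$; by the computation of \cite[p. 633]{O}, together with Lemma \ref{vanishing_OmodD} (which, via the localization sequence for $\cX\subset\P^d_K$, identifies $H^1_{\cY}(\P^d_K,\Omega^{\dag,t-1}/D^{\dag,t-1})$ with $H^0(\cX,\Omega^{\dag,t-1}/D^{\dag,t-1})=(\Omega^{\dag,t-1}/D^{\dag,t-1})(\cX)$), $H^\bullet({\rm Tot}\,F_{1,t})$ is concentrated in degree $t+1$, where it equals $(\Omega^{\dag,t-1}/D^{\dag,t-1})(\cX)\,\hat{\otimes}_K\,\bbC_p(-t)$; in particular only $F_{1,s}$ contributes to degree $s+1$. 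On the other hand each row $G_1^{\bullet,2j}$ is acyclic away from its two ends (\cite[Thm. 7.1.9]{DOR}, \cite[ch. X]{BW}), contributing the generalized Steinberg dual $v^G_{P_{(d+1-j,1^j)}}(\bbQ_p)'(-j)$ in total degree $j+1$ and a cycle-class term ($\bbQ_p(-j)$, resp. $({\rm Ind}^{\infty,G}_{P_{(d,1)}}\bbQ_p)'(-1)$ for $j=1$) in total degree $2j$; therefore $H^{s+1}({\rm Tot}\,G_1)$ is a successive extension of $v^G_{P_{(d-s+1,1^s)}}(\bbQ_p)'(-s)$ and, exactly when $s$ is odd, a cycle-class piece matched with $H^{s+1}(\P^d_{\bbC_p},\bbQ_p)$, while $H^s({\rm Tot}\,G_1)$ likewise contains, when $s$ is even, a cycle-class piece matched with $H^s(\P^d_{\bbC_p},\bbQ_p)$.

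It remains to analyze the two boundary maps. As $H^{s+1}({\rm Tot}\,F_1)$ has no trivial $G$-quotient (Lemma \ref{not_include_trivial}), $\delta$ kills it and factors through an induced map $\bar\delta\colon H^{s+1}({\rm Tot}\,G_1)\to H^{s+1}(\P^d_{\bbC_p},\bbQ_p)$. The decisive point is that $\bar\delta$ is an isomorphism on the cycle-class piece: the generator of $H^{s+1}(\P^d_{\bbC_p},\bbQ_p)$ is the $((s+1)/2)$-th power of the hyperplane class, i.e. the Gysin class of the $K$-rational linear subspace $\P^{d-(s+1)/2}_K\subset\cY$, which lifts to $H^{s+1}_{\cY_{\bbC_p}}(\P^d_{\bbC_p},\bbQ_p)$, and under the identification of the de Rham part of the local pro-étale cohomology of the tubes with local de Rham cohomology (Proposition \ref{inter_result}) this lift is precisely the cycle-class piece of $H^{s+1}({\rm Tot}\,G_1)$. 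Since the complementary subquotient of $H^{s+1}({\rm Tot}\,G_1)$ is $v^G_{P_{(d-s+1,1^s)}}(\bbQ_p)'(-s)$, it follows that $\ker(\bar\delta)=\ker(\delta)/H^{s+1}({\rm Tot}\,F_1)\cong v^G_{P_{(d-s+1,1^s)}}(\bbQ_p)'(-s)$, so $\ker(\delta)$ is the extension of $v^G_{P_{(d-s+1,1^s)}}(\bbQ_p)'(-s)$ by $(\Omega^{\dag,s-1}/D^{\dag,s-1})(\cX)\,\hat{\otimes}_K\,\bbC_p(-s)$; the same cycle-class argument in degree $s$ shows $H^s_{\cY_{\bbC_p}}(\P^d_{\bbC_p},\bbQ_p)\to H^s(\P^d_{\bbC_p},\bbQ_p)$ is onto, so the cokernel term vanishes and $H^s(\cX_{\bbC_p},\bbQ_p)\cong\ker(\delta)$ is exactly the asserted extension. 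The $\Gamma_K$-equivariance is automatic, as every object comes by base change from $K$ and the Tate twists carry the Galois action, and the $G$-action is continuous throughout. I expect the main obstacle to be precisely this last geometric identification — that the de Rham part of the local pro-étale cohomology of the tubes $\P^j_{\bbC_p}(\epsilon_n)$ is spanned by Gysin classes of $K$-rational linear subspaces and maps isomorphically onto $H^\bullet(\P^d_{\bbC_p},\bbQ_p)$ — everything else being bookkeeping with the preceding sections.
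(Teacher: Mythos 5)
Your proposal is correct and follows essentially the same route as the paper: the localization sequence (\ref{long_exact_2}), the spectral sequence (\ref{ss}) split via Proposition \ref{inter_result} into the sub-double-complex $F_1$ (evaluated by \cite[p.~633]{O}) and the quotient $G_1$ (evaluated by \cite[Thm.~7.1.9]{DOR}), the weight argument of Lemma \ref{not_include_trivial} to rule out any interaction, and finally the matching of the cycle-class pieces with $H^{2k}(\P^d_{\bbC_p},\bbQ_p)$ in (\ref{long_exact_2}) --- the last step being exactly what the paper asserts without further argument, so your Gysin-class justification is a reasonable supplement rather than a deviation. The only minor slip is attributing the identification $H^0(\cX,\Omega^{\dag,s-1}/D^{\dag,s-1})=(\Omega^{\dag,s-1}/D^{\dag,s-1})(\cX)$ to Lemma \ref{vanishing_OmodD} alone; it also uses the vanishing $H^1(\cX,D^{\dag,s-1})=0$ of Proposition \ref{van_coh_drin}, just as in the paper.
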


\begin{proof}
 In order to complete the proof, we mention that the contributions $G_2^{0,2k},$ $k \geq 2,$ are mapped isomorphically to the cohomology
 groups $H^{2k}(\P^d_{\bbC_p},\bbQ_p)$ in the long exact cohomology sequence (\ref{long_exact_2}). For $k=1,$ we have a 
 surjection $G_2^{0,1} \to \bbQ_p$ whose kernel is isomorphic to $v^G_{P_{(d,1)}}(\bbQ_p)(-1)'.$
\end{proof}

\begin{Proposition}\label{van_coh_drin}
Let $s \geq 0$. Then $H^n(\cX,D^{\dag,s})=0$ for all $n>0.$ 
\end{Proposition}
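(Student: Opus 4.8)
The plan is to reduce the statement to the local vanishing already at hand together with the Stein structure of $\cX$. Since $\cX$ is a Stein space, fix the standard exhaustion $\cX=\bigcup_{k\in\N}\cX_k$ by affinoid subdomains (as in \S2 of \cite{SS} and in \cite{O}). The short exact sequences
$$0\to \varprojlim_k\nolimits^{(1)}H^{i-1}(\cX_k,D^{\dag,s})\to H^i(\cX,D^{\dag,s})\to \varprojlim_k H^i(\cX_k,D^{\dag,s})\to 0$$
of \cite[\S2]{SS} then reduce everything to: (a) $H^i(\cX_k,D^{\dag,s})=0$ for all $i>0$ and all $k$; and (b) the projective system $\big(H^0(\cX_k,D^{\dag,s})\big)_k=\big(D^{\dag,s}(\cX_k)\big)_k$ has the topological Mittag--Leffler property. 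For (b) one argues exactly as in Corollary \ref{again_vanishing_D} and Remark \ref{rmk_17}: $D^{\dag,s}(\cX_k)$ is the kernel of the continuous, strict differential $d^s$ on $\Omega^{\dag,s}(\cX_k)$, hence a closed subspace, and by strictness of $d^s$ the restriction maps $D^{\dag,s}(\cX_{k+1})\to D^{\dag,s}(\cX_k)$ have dense image because those for $\Omega^{\dag,s}$ do; so the $\varprojlim^{(1)}$-terms vanish. Granting (a), this also settles the identity announced in the Remark at the beginning of the paper, since by the long exact sequence attached to $0\to D^{\dag,s}\to\Omega^{\dag,s}\to\Omega^{\dag,s}/D^{\dag,s}\to 0$ and the $H^{>0}$-acyclicity of the coherent sheaf $\Omega^{\dag,s}$ on the Stein space $\cX$ one has $H^1(\cX,D^{\dag,s})={\rm coker}\big(\Omega^{\dag,s}(\cX)\to H^0(\cX,\Omega^{\dag,s}/D^{\dag,s})\big)$.

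For (a) I would use that each $\cX_k$ admits a finite admissible covering by affinoids isomorphic to products of closed balls and closed annuli, with all intersections again of this shape, coming from the combinatorial description of Drinfeld's space in \cite{SS,O}. By Proposition \ref{vanishing_D} these opens and their intersections are acyclic for $D^{\dag,s}$ and for $\Omega^{\dag,s}/D^{\dag,s}$, so the associated \v{C}ech complex computes both cohomologies on $\cX_k$. One then repeats the induction on $s$ from the proof of Lemma \ref{vanishing_OmodD}, with $\P^d_K$ and its covering by the balls $D_+(T_k)_1$ replaced by $\cX_k$ and the covering just described: for $s=0$ the sheaf $D^{\dag,0}$ is the constant sheaf $K$, its sections over every piece and intersection are $K$, and the \v{C}ech complex is the simplicial cochain complex of the nerve of the covering, which is contractible (it is the relevant finite subcomplex of the Bruhat--Tits building of $G$), whence $H^{>0}(\cX_k,D^{\dag,0})=0$; for $s\ge 1$ one uses Corollary \ref{vanishing_D_Y} to identify the \v{C}ech terms with the honest quotients $\Omega^{\dag,s}(X)/D^{\dag,s}(X)$, the exact sequences $0\to D^{\dag,s}\to\Omega^{\dag,s}\to\Omega^{\dag,s}/D^{\dag,s}\to 0$ and $0\to(\Omega^{\dag,s-1}/D^{\dag,s-1})(X)\to D^{\dag,s}(X)\to H^s_{dR}(X)\to 0$ on each covering piece, and the induction hypothesis, to reduce to the acyclicity in positive degrees of the \v{C}ech complex built from the de Rham groups $H^s_{dR}(\,\cdot\,)$ of the covering — again the combinatorial statement from Lemma \ref{vanishing_OmodD}, now for $\cX_k$, reflecting that on the affinoid $\cX_k$ the de Rham cohomology sheaf is concentrated in degree $0$. (Alternatively, (a) together with $H^i(\cX,\Omega^{\dag,s}/D^{\dag,s})=0$ for $i\ge 1$ follows from the spectral sequence of Section 3 applied to the equivariant sheaves $D^{\dag,s}$ and $\Omega^{\dag,s}/D^{\dag,s}$: the groups $H^{*}_{g\P^{j}_K(\epsilon_n)}(\P^d_K,D^{\dag,s})$ are concentrated in degree $d-j$ by Lemma \ref{coh_vanishing} and (\ref{van_local_coh}) and have the same shape as for $\Omega^{s}$ by Proposition \ref{letzte_prop_1} and (\ref{same_coh}), so the computation of \cite{O} transfers, and comparison with $H^{*}(\P^d_K,D^{\dag,s})=H^{*}(\P^d_K,\Omega^{s})$ through the long exact sequence for $\cX=\P^d_K\setminus\cY$ leaves nothing in $H^{>0}(\cX,D^{\dag,s})$.)

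The main obstacle is the combinatorial core of (a): controlling the de Rham cohomology groups of the pieces of the covering of $\cX_k$ and showing the resulting \v{C}ech complex has no positive cohomology — equivalently, proving $H^1(\cX_k,D^{\dag,s})=0$, i.e. $H^0(\cX_k,\Omega^{\dag,s}/D^{\dag,s})=\Omega^{\dag,s}(\cX_k)/D^{\dag,s}(\cX_k)$, the affinoid analogue of the identity in the opening Remark. This is the step where the specific geometry of Drinfeld's space (the contractibility of the Bruhat--Tits building and the explicit ``annulus $\times$ ball'' shape of the covering pieces) genuinely enters; once it is in place, the passage from $\cX_k$ to $\cX$ via the Stein exhaustion and Mittag--Leffler, and the deduction of the Remark's identity for all $s\ge 1$, are then formal.
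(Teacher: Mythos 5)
Your main route does not go through as written, and the gap sits exactly where you flag it. First, the geometric input is unjustified: the standard affinoid pieces of the Stein exhaustion of $\cX$ (the building-indexed affinoids of \cite{SS}) are \emph{not} products of closed balls and annuli, nor do they obviously admit finite admissible coverings by such products with all intersections again of that shape. Already for $d=1$ the vertex affinoids are $\P^1$ minus finitely many open residue discs, and in general the pieces are complements of tubes of $K$-rational hyperplanes; Proposition \ref{vanishing_D} and Corollary \ref{vanishing_D_Y} therefore cannot be invoked for them, and the whole \v{C}ech reduction of your step (a) has no foundation. Second, even granting such a covering, you yourself concede that the ``combinatorial core'' --- the acyclicity in positive degrees of the \v{C}ech complex of de Rham groups, equivalently $H^1(\cX_k,D^{\dag,s})=0$ --- is left unproved; but that is precisely the content of the proposition at the affinoid level, so the argument is not a proof but a reduction to an unproved statement of the same nature. (The Mittag--Leffler step (b) is comparatively harmless, but note that in Corollary \ref{again_vanishing_D} the density argument is run for $D^{s}$ on Banach spaces and the passage to $D^{\dag,s}$ uses that the transition maps on $D^s/D^{\dag,s}$ vanish; you would need the analogous care here.)

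Your parenthetical alternative, on the other hand, is essentially the proof the paper gives, and you should promote it to the main argument and discard the exhaustion route. The paper computes $H^{\ast}_{\cY}(\P^d_K,D^{\dag,s})$ by the spectral sequence attached to the acyclic complex (\ref{complex}) of Theorem \ref{Theorem1} with coefficients $D^{\dag,s}$: by the local results of Section 1 (Lemma \ref{coh_vanishing}, the bound (\ref{van_local_coh}), Proposition \ref{letzte_prop_1} together with (\ref{same_coh})) the local cohomology groups $H^{\ast}_{g\P^{j}_K(\epsilon_n)}(\P^d_K,D^{\dag,s})$ behave exactly as those of the coherent sheaf $\Omega^{s}$, so the evaluation of \cite[p.~633]{O} transfers verbatim and yields $H^i_{\cY}(\P^d_K,D^{\dag,s})\cong H^i(\P^d_K,D^{\dag,s})$ for $i\geq 2$ and an epimorphism $H^1_{\cY}(\P^d_K,D^{\dag,s})\to H^1(\P^d_K,D^{\dag,s})$; the long exact sequence for the pair $(\P^d_K,\cY)$ then forces $H^n(\cX,D^{\dag,s})=0$ for all $n>0$, with no Stein exhaustion and no covering of the $\cX_k$ needed. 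If you flesh out that one sentence into the actual argument, you have the paper's proof.
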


\begin{proof}
We may compute the local cohomology groups $H^i_{\cY}(\bbP^d_K,D^{\dag,s})$ by the spectral
sequence
$$E_1^{-p,q} =  {\rm Ext}^q(\!\!\!\!\!\bigoplus_{I \subset \Delta
\atop |\Delta\setminus I|=p+1}\!\!\!\!\! i_\ast(\sideset{}{'}\prod_{g\in G/P_I}\limits \Z_{g,I}), D^{\dag,s})  
\Rightarrow H^{-p+q}_{{\cY}^{ad}}(\P_K^d, D^{\dag,s})
$$
induced by the acyclic complex (\ref{complex}) in Theorem \ref{Theorem1}. As we learned in section 1 the computation proceeds in the same way
as for equivariant vector bundles. Here we saw that in loc.cit. (p. 633) that $H^i_{\cY}(\bbP^d_K,D^{\dag,s})=H^i(\bbP^d_K,D^{\dag,s})$ for all $i\geq 2.$
Moreover, for $i=1$ we have an epimorphism $H^i_{\cY}(\bbP^d_K,D^{\dag,s})\to H^1(\P^d_K,D^{\dag,s}).$
By considering the corresponding long exact cohomology sequence we deduce the claim.
\end{proof}

\begin{Remark} \rm
It follows that $H^0(\cX,\Omega^{\dag,s}/D^{\dag,s})=\Omega^{\dag,s}(\cX)/D^{\dag,s}(\cX)$ for any integer $s\geq 0.$ In particular, our result
for the pro-\'etale cohomology of  $\cX$  coincides with the formula in \cite{CDN}. Moreover, the strong
dual $H^0(\cX,\Omega^{\dag,s}/D^{\dag,s})'$ is as a closed subspace of $H^0(\cX,\Omega^s)'$ a locally analytic $G$-representation, as well. 
The same holds henceforth for the quotient $H^0(\cX, D^{\dag,s})'$. In particular, both representations are strongly admissible 
as an extension of strongly admissible representations and in particular admissible \cite{ST2}.
\end{Remark}

Concerning the structure of the term on the left hand side we can make precise it by \cite{O,OS}. Here
we refer to \cite{OS} for the definition of the bi-functors $\cF^G_P$. Here we use Remark \ref{remark_cato} in order to see
that algebraic local cohomology $H^{d-j}_{\bbP^{j}_K}(\bbP^d_K,\Omega^{s-1}/D^{\dag,s-1})$ is an object of the category
$\cO^P$.

\begin{Theorem}
For any fixed integer $s=1,\ldots,d$, there is a descending filtration $(Z_s^i)_{i=0,\ldots,d}$ on 
$Z_s^0=H^0(\cX,\Omega^{\dag,s-1}/D^{\dag,s-1})$ 
by closed subspaces together with isomorphisms of locally analytic $G$-representations
$$(Z_s^i/Z^{i+1}_s)'  \cong \cF^G_{P_{(i+1,d-i)}}(H^{d-i}_{\bbP^{i}_K}(\bbP^d_K,\Omega^{s-1}/D^{s-1}), St_{d-i}),
i=0,\ldots,d-1.$$   where $St_{d-j}$ is the Steinberg representation of ${\rm GL}_{d-j}(K)$ considered as representation of 
$L_{(j+1,d-j)}$.
 \qed
\end{Theorem}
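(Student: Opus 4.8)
The plan is to run exactly the machinery set up in the body of the paper, now with the equivariant sheaf $\cF^\dag = \Omega^{\dag,s-1}/D^{\dag,s-1}$ in place of a coherent sheaf, and to read off the filtration from the $E_2$-page of the spectral sequence $E_{1,s}^{-p,q}$ attached to the acyclic complex (\ref{complex}). First I would recall that $H^0(\cX,\cF^\dag)$ was identified in the previous theorem (via the isomorphism $p$ arising from Lemma \ref{vanishing_OmodD}) with $H^1_{\cY}(\P^d_K,\cF^\dag)$, so the canonical filtration $V_s^\bullet$ on the latter pulls back to the filtration $(Z_s^i)$, and closedness of the $Z_s^i$ was already argued (via \cite{O2} applied to the continuous morphism $d^s$). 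Thus the only thing left is to compute the associated graded pieces, i.e. the nonzero $E_{2,s}$-terms on the relevant antidiagonal.

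The key step is the computation of $F_{2,s}^{p,q}$, which the paper already performs up to the Tate twist $\hat{\otimes}_K \bbC_p(s)$ by quoting \cite[p.\ 633]{O}: the only nonvanishing entries are $(p,q)=(-j+1,d-s+j)$ for $j=1,\dots,d$, and by Proposition \ref{inter_result} (together with the identification of $F_{1,s}$ with the $E_1$-term from \cite{O} for the sheaf $\Omega^{\dag,s-1}/D^{\dag,s-1}$) each such entry is, up to completion and twist, built from $\varprojlim_n \bigoplus_{g\in G_0/P_I^n} H^{d-i}_{g\P^i_K(\epsilon_n)}(\P^d_K,\Omega^{\dag,s-1}/D^{\dag,s-1})$ with $i = j-1$. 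The point of \cite{O,OS} is that, after dualizing, this projective limit of local cohomology groups over the orbit is precisely the value of the bi-functor $\cF^G_{P_{(i+1,d-i)}}$ on the pair $(H^{d-i}_{\P^i_K}(\P^d_K,\Omega^{s-1}/D^{s-1}), St_{d-i})$ — here one uses Remark \ref{remark_cato} to know that the \emph{algebraic} local cohomology $H^{d-i}_{\P^i_K}(\P^d_K,\Omega^{s-1}/D^{s-1})$ lies in $\cO^{P_{(i+1,d-i)}}$, so that $\cF^G_{P_{(i+1,d-i)}}$ applies, and the Steinberg representation $St_{d-i}$ of $\mathrm{GL}_{d-i}(K)$ appears as the smooth part exactly as in the vector bundle case in \cite{O}. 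Since $E_{2,s}=E_{\infty,s}$ (the $F$-part contains no trivial $\frt$-weight by the argument of Lemma \ref{not_include_trivial}, whereas the $G$-part is a sum of trivial representations, so all differentials out of the $F$-range vanish), the subquotients $V_s^{-d+i+1}/V_s^{-d+i+2}$ of $H^1_{\cY}(\P^d_K,\cF^\dag)$ are exactly these $E_{2,s}$-terms, and transporting through $p$ and dualizing gives the stated isomorphisms $(Z_s^i/Z_s^{i+1})' \cong \cF^G_{P_{(i+1,d-i)}}(H^{d-i}_{\P^i_K}(\P^d_K,\Omega^{s-1}/D^{s-1}), St_{d-i})$ of locally analytic $G$-representations.

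The main obstacle, and the place where a little care is genuinely needed, is verifying that the \emph{non-coherent} sheaf $\Omega^{\dag,s-1}/D^{\dag,s-1}$ may be fed into the formalism of \cite{O} without loss: concretely, that the identification of $F_{1,s}$ with the $E_1$-term of the spectral sequence of \cite{O} is legitimate, that the relevant $\varprojlim^{(1)}$-terms vanish (the Mittag-Leffler arguments of the lemmas preceding Proposition \ref{Ext=projlim}, which the paper already reduced to the coherent case $\Omega^{\dag,s-1}$ plus a trivial right-hand-side), and that the strict exactness in Proposition \ref{letzte_prop_1} is enough to guarantee that passing to the quotient sheaf commutes with the projective limits defining the $F_{2,s}$-terms. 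Once that bookkeeping is in place, the identification of the graded pieces with $\cF^G$-values is purely a matter of matching the explicit description of the local cohomology $H^{d-i}_{\P^i_K}(\P^d_K,\Omega^{\dag,s-1})$ as an induced object (via \cite[Prop.\ 1.4.2, Cor.\ 1.4.9]{O}) against the definition of $\cF^G_{P_{(i+1,d-i)}}$ in \cite{OS}, exactly as was done for equivariant vector bundles; the quotient by $D^{\dag,s-1}$ is then accounted for by Remark \ref{remark_cato}, which says the algebraic counterpart is the pushout, hence still an object of $\cO^{P_{(i+1,d-i)}}$. I would therefore present the proof as: (1) recall $p$ is an isomorphism and $Z_s^i := p^{-1}(V_s^{-d+i+1})$ with $Z_s^i$ closed; (2) run the spectral sequence $E_{1,s}$, cite \cite{O} for the shape of $F_{2,s}$ and the acyclicity of the $G$-rows, and note $E_{2,s}=E_{\infty,s}$; (3) identify each graded piece's dual with the corresponding $\cF^G$-value using \cite{O,OS} and Remark \ref{remark_cato}.
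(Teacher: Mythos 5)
Your proposal is correct and takes essentially the same route as the paper: the filtration $Z_s^i=p^{-1}(V_s^{-d+i+1})$ pulled back through the isomorphism $p$ coming from Lemma \ref{vanishing_OmodD}, closedness via \cite{O2} applied to the continuous morphism $d^s$, the computation of the relevant $E_2$-terms by reduction to \cite[p.~633]{O}, and the identification of the duals of the graded pieces with the $\cF^G_{P_{(i+1,d-i)}}$-values via \cite{O,OS} together with Remark \ref{remark_cato}. The only slight slip is that your degeneration argument (trivial $\frt$-weights in the $G$-part versus none in the $F$-part, as in Lemma \ref{not_include_trivial}) belongs to the pro-\'etale spectral sequence; for the sheaf-coefficient spectral sequence $E_{1,s}$ there is no $G$-part, and the statement actually needed, namely that $H^1_{\cY}(\P^d_K,\Omega^{\dag,s-1}/D^{\dag,s-1})$ is a successive extension of the nonvanishing $E_{2,s}$-entries, is exactly what the quoted computation of \cite{O} supplies.
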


\begin{Remark}
{\rm   In the case of equivariant vector bundles we used in the above  formula 
rather the reduced local cohomology $\tilde{H}^{d-j}_{\P^j_K}(\P^d_K,\cF):= {\rm ker}\,\big(H^{d-j}_{\P^j_K}(\P^d_K,\cF) 
\rightarrow H^{d-j}(\P^d_K,\cF)\big)$. Concerning the compatibility  we note that for 
$\cF=\Omega^{s-1}/D^{s-1}$ we have $\tilde{H}^{d-j}_{\P^j_K}(\P^d_K,\cF)=H^{d-j}_{\P^j_K}(\P^d_K,\cF)$ by Lemma
\ref{vanishing_OmodD}.}
\end{Remark}

\begin{Remark}
The above approach for the determination of the $p$-adic pro-\'etale cohomology works also for the 
$\ell$-adic pro-\'etale cohomology with $\ell \neq p.$ In this case one gets as cited in \cite[Thm. 1.2]{CDN}
$H^s(\cX_{\bbC_p},\bbQ_\ell) =v^{G}_{{P_{(d-s+1,1,\ldots,1)}}}(\bbQ_\ell)'(-s)$ for all $s\geq 0.$
\end{Remark}


\end{document}